\theoremstyle{definition}
\newtheorem{theorem}{Theorem}[section]
\newtheorem{claim}[theorem]{Claim}
\newtheorem{conjecture}[theorem]{Conjecture}
\newtheorem{corollary}[theorem]{Corollary}
\newtheorem{lemma}[theorem]{Lemma}
\newtheorem{notation}[theorem]{Notation}
\newtheorem{proposition}[theorem]{Proposition}
\newtheorem{remark}[theorem]{Remark}
\newtheorem{question}[theorem]{Question}
\newtheorem{definition}[theorem]{Definition}
\newtheorem{no/co}[theorem]{Notation/Conventions}
\theoremstyle{plain}
\newtheorem{condition}[theorem]{}
\newcommand{\R}{\mathbb{R}}
\newcommand{\Z}{\mathbb{Z}}
\newcommand{\N}{\mathbb{N}}
\newcommand{\Q}{\mathbb{Q}}
\newcommand{\C}{\mathbb{C}}
\newcommand{\B}{\mathbb{B}}
\newcommand{\del}{\partial}
\DeclareMathOperator{\id}{id}
\DeclareMathOperator{\supp}{supp}
\DeclareMathOperator{\vol}{vol}
\DeclareMathOperator{\Diff}{Diff}
\DeclareMathOperator{\Homeo}{Homeo}
\DeclareMathOperator{\rel}{rel}
\DeclareMathOperator{\GL}{GL}
\DeclareMathOperator{\germ}{\mathcal{G}_+}
\title{Automatic continuity for homeomorphism groups and applications}
\author{Kathryn Mann \\ Appendix with Fr\'ed\'eric Le Roux}
\date{}
\begin{document}

\maketitle

\abstract{Let $M$ be a compact manifold, possibly with boundary.  We show that the group of homeomorphisms of $M$ has the \emph{automatic continuity property}: any homomorphism from $\Homeo(M)$ to any separable group is necessarily continuous.  This answers a question of C. Rosendal.  If $N \subset M$ is a submanifold, the group of homeomorphisms of $M$ that preserve $N$ also has this property.   

Various applications of automatic continuity are discussed, including applications to the topology and structure of \emph{groups of germs} of homeomorphisms. 
In an appendix with Fr\'ed\'eric Le Roux we also show, using related techniques, that the group of germs at a point of homeomorphisms of $\R^n$ is strongly uniformly simple.  
}

%---------------------------------------------------------------------------------
\section{Introduction}

\begin{definition} A topological group $G$ has the \emph{automatic continuity property} if every homomorphism from $G$ to any separable group $H$ is necessarily continuous.   
\end{definition}

One should think of automatic continuity as a very strong form of rigidity. Many familiar topological groups fail to have the property, for example
\begin{itemize}
\item  Automorphisms of $\R$ as a vector space over $\Q$ (other than homotheties) are discontinuous homomorphisms $\R \to \R$.  
\item Applying a wild automorphism of $\C$ to all matrix entries gives a discontinuous homomorphism $\GL(n, \C) \to \GL(n, \C)$. 
\item More generally, for any field $F$ of cardinality at most continuum, Kallman \cite{Kallman} gives injective homomorphisms from $\GL(n, F)$ to $S_\infty$, the group of permutations of an infinite countable set.  As $S_\infty$ admits a separable, totally disconnected topology, $\GL(n, F)$ fails to have automatic continuity as soon as $F$ is not totally disconnected.  
\end{itemize}

Remarkably, several ``big" groups do have the automatic continuity property; a current research program in descriptive set theory aims to show that automorphism groups of certain structures have automatic continuity.  Examples of such groups known to have automatic continuity include the infinite-dimensional unitary group \cite{Tsankov}, the group of isometries of the Urysohn space \cite{Sabok}, the order-preserving automorphisms of $\Q$ and homeomorphisms of $2^\N$ and of $\R$ \cite{RS}, and the homeomorphism groups of compact 2-dimensional manifolds \cite{Rosendal 2man}.  The primary goal of this paper is to prove the following.  

\begin{theorem} \label{main cont thm}
Let $M$ be a compact manifold, possibly with boundary, and let $\Homeo_0(M)$ denote the identity component of the group of homeomorphisms of $M$ (with the standard $C^0$ topology).   Then $\Homeo_0(M)$ has automatic continuity.  
\end{theorem}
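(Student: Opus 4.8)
The plan is to deduce Theorem~\ref{main cont thm} from the \emph{Steinhaus property}: I will show there is an integer $k$, depending only on $n=\dim M$, such that for every symmetric subset $W\subseteq G:=\Homeo_0(M)$ for which countably many left translates $\{g_iW\}$ cover $G$ (a ``countably syndetic'' set), the set $W^{k}$ is a neighborhood of the identity. This property implies automatic continuity by a standard and purely formal argument: given a homomorphism $\phi\colon G\to H$ with $H$ separable and an open $V\ni 1_H$, pick a symmetric open $V_0$ with $V_0^{2k}\subseteq V$; separability gives countably many $h_n$ with $H=\bigcup_n h_nV_0$, so $W:=\phi^{-1}(V_0^{2})$ is symmetric and countably syndetic, whence $W^{k}$ is an identity neighborhood and $\phi(W^{k})\subseteq V_0^{2k}\subseteq V$. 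Thus $\phi$ is continuous at $1_G$, hence everywhere, and the entire content is the Steinhaus property.

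I would then reduce the Steinhaus property to a \emph{local} statement. Fix a finite cover of $M$ by chart balls (or half-balls along $\partial M$). By the fragmentation lemma for $\Homeo_0$ of a compact manifold there is an integer $N$ such that every homeomorphism sufficiently $C^0$-close to the identity is a product of at most $N$ homeomorphisms, each supported in a prescribed small ball inside some chart; here I use that $\Homeo_0(M)$ acts transitively on embedded balls to place the supports where I like. Hence it suffices to prove: there is a universal constant $c$ such that for every prescribed embedded ball $B\subseteq M$ and every symmetric countably syndetic $W\subseteq G$, the subgroup $G_B$ of homeomorphisms supported in $B$ satisfies $G_B\subseteq W^{c}$; then every near-identity homeomorphism lies in $W^{cN}$ and we may take $k=cN$. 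This localizes the problem to the group $\Homeo_c(\R^n)$ (or $\Homeo_c(\R^n_{\ge 0})$), realized as $G_B$.

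The heart of the argument is an ``infinite iteration / swindle'' inside the prescribed ball. Choose pairwise disjoint closed balls $B_1,B_2,\dots\subseteq B$ shrinking to a point $p$, with $B_1$ the ball we actually want, fixed identifications $\psi_i\colon G_B\xrightarrow{\ \sim\ }G_{B_i}$, and a shift $\sigma\in G_B$ with $\sigma(B_i)=B_{i+1}$ and $\sigma\psi_i(\cdot)\sigma^{-1}=\psi_{i+1}$. Because the supports shrink to $p$, any sequence $(g_i)$ with $g_i\in G_{B_i}$ has a well-defined infinite product; this yields an injective homomorphism $\Delta\colon G_B\to G$, $\Delta(g)=\prod_i\psi_i(g)$, satisfying $\sigma\,\Delta(g)\,\sigma^{-1}=\psi_1(g)^{-1}\Delta(g)$, so $\psi_1(g)=[\Delta(g),\sigma]$ for every $g\in G_B$; the ``first copy'' of an arbitrary element of $G_B$ is thus a single commutator of $\Delta(g)$ with the fixed element $\sigma$. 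Now I bring in $W$: from $G=\bigcup_k f_kW$ and the fact that $\Delta$ is a homomorphism, the sets $A_k:=\Delta^{-1}(f_kW)$ cover $G_B$ and satisfy $\Delta(A_k^{-1}A_k)\subseteq W^{2}$, since $g,g'\in A_k$ gives $\Delta(g^{-1}g')=\Delta(g)^{-1}\Delta(g')\in W^{-1}W=W^{2}$. The remaining step, which I expect to be the real obstacle, is the combinatorial claim that $G_B=(A_k^{-1}A_k)^{m}$ for some $k$ and a universal $m$ --- equivalently, that a countable cover of $G_B$ about which \emph{nothing} is assumed (no Baire measurability, no regularity) can be ``swallowed'' into bounded-length words. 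For this I would run a second swindle: $G_B$ again contains infinitely many commuting conjugate copies of itself together with a shift, and this extra flexibility lets one encode the entire countable cover by a single element and bound word lengths uniformly. Granting this, $\Delta(G_B)\subseteq W^{2m}$; arranging $\sigma$ itself to be of the form $\Delta'(\rho)$ for an auxiliary diagonal (so $\sigma\in W^{2m'}$ as well) gives $G_{B_1}=\psi_1(G_B)\subseteq W^{c}$ with $c$ universal, which is exactly the local statement.

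To summarize the difficulty: everything except the combinatorial absorption step --- the Steinhaus-implies-continuity implication, the fragmentation/transitivity reduction (modulo the classical fragmentation theorem for $\Homeo_0$ of a compact manifold, which I would cite rather than reprove), and the bookkeeping of the constants $N,m,m',c,k$ --- is routine. The essential point, where the geometry of manifolds is genuinely used, is that disjoint balls can be nested self-similarly inside a ball and permuted by group elements; this is what powers both the commutator identity $\psi_1(g)=[\Delta(g),\sigma]$ and the absorption of the countable cover. Making that absorption uniform for an \emph{arbitrary} countably syndetic $W$ is the crux.
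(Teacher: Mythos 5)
Your reduction to the Steinhaus property and your use of nested disjoint balls, infinite products, and the commutator identity $\psi_1(g)=[\Delta(g),\sigma]$ are all in the spirit of the Rosendal-style argument the paper follows. But the proposal has a genuine gap exactly where you flag it, and the gap is not a technicality: the claim that $G_B=(A_k^{-1}A_k)^m$ for some $k$ and a \emph{universal} $m$ --- equivalently, that an arbitrary cover of $G_B$ by countably many translates of a symmetric set can be ``absorbed'' into a bounded power covering \emph{all} of $G_B$ --- is precisely the content of the theorem, and ``run a second swindle to encode the countable cover by a single element'' is not an argument. Nothing about $W$ is assumed beyond countable syndeticity, so no single element can encode the cover, and the pigeonhole you would need only tells you that \emph{some} $A_k$ is large in some unlocated sense. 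Relatedly, your intermediate claim that $G_B\subseteq W^c$ for \emph{every prescribed} ball $B$ is stronger than what the diagonal/swindle technique yields: the infinite-product pigeonhole argument (the paper's Claim \ref{kiW claim}) produces a good ball $B''$ inside any given ball, but one has \emph{no control over which ball it is or how small it is} --- the adversarial structure of $W$ decides. You cannot arrange for $B_1$, ``the ball we actually want,'' to be the good one.

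The paper closes this gap with three ingredients absent from your proposal. First, a Baire category argument (Lemma \ref{Baire lemma}) upgrades ``countably many translates of $W$ cover the group'' to ``$W^2$ is dense in some identity neighborhood,'' which is the only place a metric scale $\epsilon$ enters. Second, the uncontrolled good ball $B''$ is accepted as uncontrolled, and Anderson's commutator trick (Proposition \ref{perfectness prop}) converts ``$f$ agrees with the restriction of an element of $W^2$'' into ``$f\in W^8$.'' Third --- and this is the step your fragmentation reduction skips --- an \emph{efficient cover} of $M$ by $m=m(M)$ families of disjoint $\epsilon$-balls is built, and elements of $W^2$ (obtained from the Baire density statement) are used to \emph{conjugate} each prescribed $\epsilon$-ball into its associated uncontrolled good ball $(B^i_\alpha)''$, giving $f_i\in W^{12}$ for each fragment and hence $f\in W^{12m}$. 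Without the Baire step you have no supply of elements of bounded $W$-length to perform this conjugation, and without the efficient cover the number of fragments is not bounded independently of $\epsilon$. As written, your proof establishes the formal Steinhaus-implies-continuity implication and the correct commutator identities, but defers the entire analytic and combinatorial content to an unproven absorption claim.
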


Of course, this immediately implies that $\Homeo(M)$ has automatic continuity as well.  We also prove automatic continuity for the subgroup of homeomorphisms of $M$ that preserve a submanifold, and a form of automatic continuity for homeomorphism groups of noncompact manifolds.

\subsection{Main applications}

The proof of Theorem \ref{main cont thm} indicates that there is a deep relationship between the topology of a manifold and the topology and algebraic structure of its homeomorphism group.  In Section \ref{applications sec} we describe three applications.  \medskip

\noindent \textit{I. Uniqueness results.}
A first consequence of automatic continuity is a new proof of a theorem of Kallman.

\begin{theorem}[\cite{Kallman unique}] \label{Kallman thm}
Let $M$ be a compact manifold. The group $\Homeo_0(M)$ has a unique complete, separable topology. 
\end{theorem}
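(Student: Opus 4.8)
The plan is to derive this uniqueness statement from Theorem \ref{main cont thm} together with the open mapping theorem for Polish groups. Write $G = \Homeo_0(M)$ and let $\tau_0$ denote its standard $C^0$ topology; with respect to $\tau_0$ the group $G$ is Polish, being a closed subgroup of the Polish group $\Homeo(M)$ (closed because the identity component of a topological group is closed). Let $\tau$ be any complete, separable group topology on the abstract group $G$; such a topology is Polish.

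The argument then has two halves. First I would apply automatic continuity in one direction: since $(G,\tau)$ is separable, Theorem \ref{main cont thm} applied to the identity homomorphism $\id\colon (G,\tau_0)\to (G,\tau)$ shows this map is continuous, i.e.\ $\tau\subseteq\tau_0$. For the reverse inclusion I would invoke the open mapping theorem, namely that a continuous bijective homomorphism between Polish groups is automatically a homeomorphism (equivalently, via Pettis's lemma and the Baire category theorem, a Borel homomorphism between Polish groups is continuous). Applied to the continuous bijection $\id\colon (G,\tau_0)\to (G,\tau)$ obtained in the first step, this yields that $\id\colon (G,\tau)\to (G,\tau_0)$ is continuous as well, so $\tau_0\subseteq\tau$, and hence $\tau=\tau_0$.

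The main point to be careful about is purely bookkeeping on the hypotheses: one needs that a complete, separable group topology is genuinely Polish (completely metrizable and second countable), so that the descriptive-set-theoretic open mapping theorem applies, and that $(G,\tau_0)$ itself is Polish; both are standard. It is worth emphasizing that automatic continuity is essential here and not a mere convenience — an abstract group can carry non-isomorphic Polish group topologies (for instance $(\R,+)\cong(\R^2,+)$ as abstract groups), so some strong rigidity input is unavoidable, and automatic continuity supplies exactly the one-sided continuity that the open mapping theorem then upgrades to a topological isomorphism.
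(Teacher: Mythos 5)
Your argument is correct and is essentially the same as the paper's: apply Theorem \ref{main cont thm} to see that the identity map from $(\Homeo_0(M),\tau_0)$ to the new Polish topology is continuous, then use Pettis' theorem / the open mapping theorem for Polish groups to upgrade this continuous bijective homomorphism to a homeomorphism. No gaps; the remark that automatic continuity is the essential rigidity input is accurate but not needed for the proof itself.
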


\medskip
\noindent \textit{II. Extension problems.}   Epstein and Markovic \cite{EM} asked whether every \emph{extension homomorphism} $\Homeo_0(S^1) \to \Homeo_0(D^2)$ is continuous.  Automatic continuity immediately gives a positive answer, as well as for the more general question of extensions replacing the pair $(D^2, S^1)$ with $(M, N)$, where $N$ is either a submanifold or boundary component of $M$.   A more subtle variant of this question was asked by Navas.  

\begin{question}[Navas] 
Let $\germ(\R^n, 0)$ denote the group of germs at $0$ of orientation preserving homeomorphisms of $\R^n$ fixing 0.  Does there exist an extension homomorphism $\germ(\R^n, 0) \to \Homeo(\R^n, 0)$?  
\end{question}

We use automatic continuity to prove a much stronger result, which also implies that the group of germs does not admit a separable topology.

\begin{theorem} 
Let $H$ be any separable group.  Then any homomorphism $\germ(\R^n, 0) \to H$ is trivial.  
\end{theorem}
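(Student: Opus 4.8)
The plan is to realize $\germ(\R^n,0)$ as a quotient, by a \emph{dense} normal subgroup, of a homeomorphism group to which Theorem~\ref{main cont thm} applies; automatic continuity of the larger group then forces any homomorphism to kill everything. Let $G$ be the group of orientation-preserving homeomorphisms of the closed $n$-ball $D^n$ that fix the center $0$. This is an open, index-two subgroup of the group of homeomorphisms of the compact manifold $D^n$ preserving the (zero-dimensional) submanifold $\{0\}$, and since the latter has automatic continuity by Theorem~\ref{main cont thm} and its submanifold version, so does $G$ (automatic continuity passes to open finite-index subgroups, e.g.\ via induced homomorphisms into a wreath product). Taking germs at $0$ gives a homomorphism $\rho\colon G\to\germ(\R^n,0)$, and the first step is to see that $\rho$ is \emph{surjective}. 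Given a germ, represent it by an orientation-preserving homeomorphism $f$ of a small ball $B_r$ about $0$ in the interior of $D^n$; choosing $\delta$ with $\overline{B_\delta}\subseteq f(B_r)$ and putting $W=f^{-1}(\overline{B_\delta})$, a closed ball neighbourhood of $0$ with bicollared boundary sphere, the generalized Schoenflies theorem (valid in all dimensions) identifies $D^n\setminus\operatorname{int}W$ with an annulus $S^{n-1}\times[0,1]$, across which one extends $f|_W$ by a product homeomorphism (prescribing the behaviour on $\partial D^n$ freely) to obtain an orientation-preserving homeomorphism $F$ of $D^n$ which fixes $0$ and agrees with $f$ near $0$. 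Then $F\in G$ and $\rho(F)$ is the prescribed germ.

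Now let $\phi\colon\germ(\R^n,0)\to H$ be a homomorphism with $H$ separable (which we may take Hausdorff), and set $\phi'=\phi\circ\rho\colon G\to H$. By automatic continuity $\phi'$ is continuous, so $\ker\phi'$ is a closed subgroup of $G$; and since $\phi'$ factors through $\rho$, it contains $\ker\rho=\{h\in G:h=\mathrm{id}\text{ near }0\}$. The key point is that $\ker\rho$ is \emph{dense} in $G$: for $g\in G$, the set $g(\overline{B_\delta})$ has diameter tending to $0$ as $\delta\to0$ (since $g(0)=0$), so --- again invoking Schoenflies to recognize $g(\overline{B_\delta})$ as a ball with bicollared boundary --- one may alter $g$ only on $\overline{B_\delta}$, replacing it there by a homeomorphism onto the same image that is the identity near $0$; the modified maps lie in $\ker\rho$ and differ from $g$, respectively $g^{-1}$, uniformly by at most $\operatorname{diam}g(\overline{B_\delta})$, respectively $2\delta$, hence converge to $g$ in the $C^0$ group topology. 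Consequently $\ker\phi'$, being closed and containing the dense subgroup $\ker\rho$, is all of $G$; so $\phi'$ is trivial, and since $\rho$ is onto, $\phi$ is trivial.

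The step I expect to be most delicate is the surjectivity of $\rho$: one must run the Schoenflies and annulus arguments in all dimensions and keep careful track of orientations, so that the target is precisely the orientation-preserving group $\germ(\R^n,0)$ --- working instead with all germs would be fatal, as that group surjects onto $\Z/2$, which is separable. A second (minor) point is to arrange the ambient group so that Theorem~\ref{main cont thm} literally applies. Conceptually, the density of $\ker\rho$ is the entire mechanism, and it is specific to germs \emph{at a point}: along a positive-dimensional submanifold $N$, a homeomorphism fixing $N$ can move nearby points by a definite amount, so $\{h=\mathrm{id}\text{ near }N\}$ ceases to be $C^0$-dense, which is exactly why no comparable rigidity holds there.
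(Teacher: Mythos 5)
Your overall architecture --- surjectivity of the germ map $\rho$ from a homeomorphism group of the ball, plus density of $\ker\rho$, plus automatic continuity of the source --- is genuinely different from the paper's, and more self-contained in one respect: it would bypass the simplicity of $\germ(\R^n,0)$ (Theorem \ref{germ thm}, the appendix) entirely. The surjectivity and density steps are fine, and your density mechanism is exactly the one the paper exploits. But the foundational input is not available as you invoke it, and this is a genuine gap, not a citation quibble.

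You apply Theorem \ref{main cont thm} (in its relative form) to the stabilizer of the \emph{point} $0$, i.e.\ to $\Homeo_0(D^n \rel \{0\})$ with $N=\{0\}$ a zero-dimensional submanifold. The paper proves the relative case only for $N$ of dimension at least $1$ (see Proposition \ref{relative perfectness prop}(ii), Lemma \ref{multi lemma 3}, and Theorem \ref{main steinhaus thm}), and the restriction is not cosmetic. The engine of Step 2 is relative uniform perfectness via Anderson's trick: for a relative ball $B$ meeting $N$ one needs $b$, supported in $B$ and preserving $N$, with $b^k(\supp f)$ pairwise disjoint; this is done by translating along $N$ inside $B$. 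When $N$ is a single point $0$ and $f$ is supported in a ball around $0$ with support accumulating at $0$, every $b^k(\supp f)$ contains points arbitrarily close to the fixed point $0$ of $b$, so no such displacement exists and the commutator trick in Lemma \ref{local lemma} breaks down precisely at the marked point. So automatic continuity of the point-stabilizer group is an unproved (and nontrivial) assertion in your argument, not a special case of the theorem. Note that this obstruction is the mirror image of your own closing remark: you correctly observe that density of $\ker\rho$ is special to germs at a point, but it is equally true that the paper's relative machinery is special to submanifolds of positive dimension.

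The paper routes around exactly this difficulty: it only uses automatic continuity of $\Homeo_0(I)$ for the compact interval $I=[0,1]$ (the relevant point is a \emph{boundary} point, covered by the manifold-with-boundary case and, in fact, already by Rosendal--Solecki), imported into $\germ(\R^n,0)$ via the radial action on the unit ball. The image of that radial subgroup is far from all of $\germ(\R^n,0)$, so this only shows that $\phi$ has nontrivial kernel; the paper then invokes simplicity of $\germ(\R^n,0)$ to conclude that $\phi$ is trivial. If you want to keep your quotient-by-a-dense-normal-subgroup strategy, you must either prove automatic continuity for $\Homeo_0(D^n\rel\{0\})$ separately (handling the marked point by some device other than Proposition \ref{perfectness prop}, e.g.\ using local contractions at $0$) or retreat to the paper's combination of a boundary-point model plus simplicity.
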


\noindent The proof uses the algebraic simplicity of $\germ(\R^n, 0)$, a strong form of which is proved in the appendix.  

We also discuss related problems on homomorphisms between groups of germs, and progress on problems involving homomorphisms between groups of homeomorphisms, a topic that has recently attracted significant attention.

\medskip
\noindent \textit{III. Nonsmoothing.} A third application is a global ``algebraic nonsmoothing" theorem.   Recall that an action of a group $G$ on a manifold $M$ is $C^r$-\emph{smoothable} if it is topologically conjugate to an action by $C^r$ diffeomorphisms.  
Although this is a dynamical constraint on the action, it is also interesting to ask whether the \emph{algebraic structure} of $G$ is an obstruction to actions of higher regularity.

Motivated by this, define the \emph{regularity} of an abstract group $G$ to be the largest $r$ such that there exists a manifold $M$ and a nontrivial homomorphism $G \to \Diff^r(M)$.   If $G \subset \Homeo_0(M)$, we call an action of $G$ on a manifold $N$ by $C^r$ diffeomorphisms a \emph{algebraic $C^r$--smoothing} of $G$.    We show the following.  

\begin{theorem}  \label{nonsmoothing thm} 
Let $M$ be a compact manifold.  Then $\Homeo_0(M)$ has regularity 0.  In particular, $\Homeo_0(M)$ is not algebraically $C^1$--smoothable.  
\end{theorem}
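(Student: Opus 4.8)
The plan is to derive Theorem~\ref{nonsmoothing thm} as a fairly direct consequence of automatic continuity (Theorem~\ref{main cont thm}) combined with the known group-theoretic and dynamical structure of $\Homeo_0(M)$. Suppose $\phi\colon \Homeo_0(M) \to \Diff^1(N)$ is a nontrivial homomorphism, where $N$ is some (without loss of generality compact, or we restrict attention to a compact piece) manifold. Since $\Diff^1(N)$ with its $C^1$ topology is separable and metrizable, Theorem~\ref{main cont thm} forces $\phi$ to be continuous. Now I would invoke simplicity: $\Homeo_0(M)$ is a simple group (a classical theorem of Anderson, Chernavski, Edwards--Kirby, and Mather for the topological category), so the nontrivial homomorphism $\phi$ is automatically injective. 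Thus $\phi$ embeds $\Homeo_0(M)$ as an abstract subgroup of $\Diff^1(N)$, and the embedding is continuous.

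The next step is to upgrade continuity of $\phi$ to something that rules out such an embedding. Here the strategy is to exploit the fact that a continuous injective homomorphism from a "large" Polish group can be analyzed via local dynamics. Concretely, I would look at the action of $\phi(\Homeo_0(M))$ on $N$ and extract a contradiction with $C^1$ regularity using a distortion or commutator-growth argument: $\Homeo_0(M)$ contains subgroups (e.g. generated by compactly supported homeomorphisms realizing arbitrarily fast "infinite iteration" or commutator-expressible elements) whose image under any $C^1$ action would have to be a distorted element, but $C^1$ diffeomorphisms of a compact manifold cannot be arbitrarily distorted in the way homeomorphisms can. More precisely, I would use that $\Homeo_0(M)$ has elements that are \emph{arbitrarily distorted} — infinitely distorted, via the "commutator trick" / fragmentation writing a homeomorphism as a product of conjugates in a way that has no analogue controlled by a derivative — whereas in $\Diff^1$ there are cohomological or growth obstructions (Franks--Handel, Militon, or the Muller--Tsuboi type arguments) bounding distortion. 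The clean version: $\Homeo_0(M)$ is a \emph{uniformly perfect} group in which every element is a commutator and moreover displays "portable" dynamics forcing, under a $C^1$ representation, the existence of a $C^1$ diffeomorphism that is conjugate to its own proper powers in an incompatible way, contradicting the mean value theorem / bounded derivative on a compact manifold.

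I expect the main obstacle to be making the passage from "continuous injection into $\Diff^1(N)$" to an outright contradiction genuinely rigorous, since continuity alone of the homomorphism does not obviously constrain the \emph{dynamics} of the image on $N$. The key technical input will be a property of $\Homeo_0(M)$ — call it the existence of a \textbf{portable} or \textbf{displaceable, infinitely-distorted} subgroup — that survives under any continuous homomorphism and whose image must then be an infinitely distorted subgroup of $\Diff^1(N)$; one then cites (or reproves) that $\Diff^1$ of a compact manifold contains no such subgroup. So the proof skeleton is: (1) separability of $\Diff^1(N)$ $\Rightarrow$ $\phi$ continuous; (2) simplicity of $\Homeo_0(M)$ $\Rightarrow$ $\phi$ injective; (3) identify an algebraic-dynamical feature of $\Homeo_0(M)$ (distortion/portability of a suitable subgroup) preserved by continuous homomorphisms; (4) observe this feature is incompatible with faithful $C^1$ actions on compact manifolds, giving the contradiction and hence regularity $0$. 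The dynamical incompatibility in step (4), i.e.\ the non-existence of infinitely distorted subgroups in $\Diff^1(N)$, is the heart of the matter and the step I would need to argue most carefully, likely by a $C^1$ estimate showing distortion forces the derivative along an orbit to both shrink and not shrink.
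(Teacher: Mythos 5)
Your steps (1) and (2) match the paper: separability of $\Diff^1(N)$ plus Theorem~\ref{main cont thm} gives continuity of $\phi$, and simplicity of $\Homeo_0(M)$ is used (the paper only needs the weaker consequence that $\phi(h)\neq\id$ for a single nontrivial $h$ supported in a small ball). But the heart of the proof --- steps (3) and (4), where the actual contradiction must come from --- is not carried out. You defer it to the claim that $\Diff^1$ of a compact manifold ``contains no infinitely distorted subgroups,'' to be cited or reproved. That is not a theorem you can cite in this generality: distortion elements genuinely exist in diffeomorphism groups (e.g.\ Avila's recurrent examples in $\Diff^\infty(S^1)$, Calegari--Freedman-type constructions), and the known $C^1$ constraints on distorted elements (Franks--Handel-style results on derivative growth, entropy, or invariant measures) do not rule out a faithful $C^1$ action of $\Homeo_0(M)$. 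You also never explain how the ``distorted subgroup'' would be transported through $\phi$, and your own caveat --- that continuity of $\phi$ does not obviously constrain the dynamics of the image --- is exactly the point at which the argument stalls. As written, the proposal reduces the theorem to an open (or false) statement.

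The paper's mechanism is more elementary and makes essential \emph{quantitative} use of continuity, which is the leverage you are missing. Continuity at the identity gives a neighborhood $U\subset\Homeo_0(M)$ with $\|\phi(f)\|_1\le\epsilon$ for all $f\in U$; crucially, $U$ contains \emph{every} homeomorphism supported in a sufficiently small ball $B$, hence \emph{all} powers $g^n$ of a topological contraction $g$ of a subball $B'\subset B$ with $\supp(g)\subset B$. So $\|D\phi(g^{n})\|\le\epsilon$ uniformly in $n$. Now pick $h$ supported in $B'$ with $\phi(h)$ moving some $x_0\in N$ by $\delta>0$. Since $g^nhg^{-n}\to\id$ in $\Homeo_0(M)$, continuity forces the displacement of $\phi(g^nhg^{-n})$ to become arbitrarily small; applying $\phi(g^{-n})$ to a short geodesic joining $\phi(g^nh)(x_0)$ to $\phi(g^n)(x_0)$ produces a $C^1$ path of length at least $\delta$, so $\phi(g^{-n})$ must stretch lengths by a factor exceeding $\epsilon$ --- contradicting the uniform derivative bound. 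If you want to salvage your outline, replace the appeal to a general no-distortion theorem for $\Diff^1$ with this concrete contraction/conjugation estimate.
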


\bigskip
%-------------------------------------
%ACKNOWLEDGEMENETS
\noindent \textbf{Acknowledgements.} The author thanks Ian Agol, Charles Pugh, and Franco Vargas Pallete for their interest in this project, Benson Farb and Bena Tshishiku for their comments on early versions of this manuscript, and Christian Rosendal for his ongoing interest and support.   
This work was partially completed while the author was in residence at MSRI, supported by NSF grant 0932078

%---------------------------------------------------------------------------------
\section{The structure of $\Homeo(M)$} \label{background sec}

We introduce some algebraic and topological properties of homeomorphism groups that will be used throughout the paper.   Much of the material in this section is standard. 

\begin{definition} 
Let $M$ be a compact manifold.  The $C^0$ topology on $\Homeo(M)$ is induced by the metric
$$d(f, g) := \max_{x \in M} d_M\left( f(x), g(x) \right)$$
where $d_M$ is any compatible metric on $M$.  
\end{definition}
\noindent This topology is separable, and independent of the choice of metric on $M$, provided that $d_M$ is compatible, i.e. it generates the topology of $M$.  Although the metric given above is not complete, $\Homeo(M)$ does admit a complete metric, in fact the metric $D(f, g) :=  d(f, g) + d(f^{-1}, g^{-1})$ is such an example (c.f. Corollary 1.2.2 in \cite{BK}).

If $N \subset M$ is a closed $d$-dimensional submanifold -- meaning that the pair $(M, N)$ has local charts to $(\R^n, \R^d)$ -- define the \emph{relative homeomorphism group}  
$$\Homeo(M \rel N) := \{ f \in \Homeo(M) \mid f(N) = N \}$$
This is a $C^0$--closed subgroup and hence also completely metrizable. Its identity component is denoted by $\Homeo_0(M \rel N)$. 

\bigskip
\noindent \textbf{Support and fragmentation.} The \emph{support} of a homeomorphism $f$, denoted $\supp(f)$, is the closure of the set $\{x \in M \mid f(x) \neq x\}$.
In our proof of automatic continuity, we will make frequent use of the fact that homeomorphisms with small support are close to the identity -- this is the most basic relationship between the topology of $\Homeo_0(M)$ and $M$.  

Although homeomorphisms close to the identity need not have small support, the \emph{fragmentation property} states that a homeomorphism sufficiently close to the identity can be expressed as the product of a \emph{bounded number} of homeomorphisms with small support.  

\begin{definition}[Local fragmentation] \label{fragmentation}
A group $G \subset \Homeo(M)$ has the \emph{local fragmentation property} if the following holds.  
Given any finite open cover $\{E_1, ... E_m\}$ of $M$, there exists a neighborhood $U$ of the identity in $G$ such that each $g \in U$ can be factored as a composition 
$g = g_1 g_2  \ldots  g_m$,
where $\supp(g_i) \subset E_i$.  
\end{definition}

\begin{proposition}[Edwards--Kirby, \cite{EK}]  \label{fragmentation prop}
Let $M$ be a compact manifold, possibly with boundary.  Then $\Homeo_0(M)$ has the local fragmentation property.   If $N \subset M$ is an embedded submanifold, then $\Homeo_0(M \rel N)$ also has the local fragmentation property.  
\end{proposition}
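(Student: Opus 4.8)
The plan is to reduce the relative case to the absolute case, and to reduce the absolute case to the classical local contractibility / deformation theorem of Edwards--Kirby. First I would recall the statement one actually wants: given a finite open cover $\{E_1, \dots, E_m\}$ of $M$, every homeomorphism sufficiently close to the identity can be written as $g_1 \cdots g_m$ with $\supp(g_i) \subset E_i$. The starting point is the Edwards--Kirby deformation theorem \cite{EK}: if $g$ is $C^0$-close to the identity, then $g$ is isotopic to the identity through an isotopy $g_t$ that is itself small (uniformly close to the constant isotopy at $\id$) and, crucially, can be chosen supported in any preassigned neighborhood of the ``moved set'' of $g$. The engine of the argument is the following local-to-global device: subordinate to the cover $\{E_i\}$, choose a partition of unity or, more cleanly, a nested family of compact sets and a collection of ``time functions'' $0 = \tau_0 \le \tau_1 \le \cdots \le \tau_m = 1$ on $M$ adapted to the cover, so that the restriction of the ambient isotopy $g_t$ to the ``time interval'' cut out on each region lies in a single $E_i$.

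The key steps, in order, are: (1) Fix the cover $\{E_1, \dots, E_m\}$ and choose a shrinking $\{E_i'\}$ with $\overline{E_i'} \subset E_i$ still covering $M$, together with continuous functions $\lambda_i : M \to [0,1]$ supported in $E_i$ with $\sum_i \lambda_i \equiv 1$ on a neighborhood of $M$; set $\mu_k = \lambda_1 + \cdots + \lambda_k$, so $0 = \mu_0 \le \mu_1 \le \cdots \le \mu_m = 1$. (2) Apply the Edwards--Kirby theorem to produce, for $g$ in a sufficiently small identity neighborhood $U$, a small ambient isotopy $(g_t)_{t \in [0,1]}$ from $\id$ to $g$. (3) Define $g_k := g_{\mu_k} \circ g_{\mu_{k-1}}^{-1}$, reindexed appropriately so that $g = g_m g_{m-1} \cdots g_1$ (telescoping product); the point is that $g_k$ moves only those points $x$ where $\mu_{k-1}(x) < \mu_k(x)$, i.e. where $\lambda_k(x) > 0$, so $\supp(g_k) \subset \overline{\supp \lambda_k} \subset E_k$ — modulo the technical fussing that the isotopy evaluated at a point-dependent time is still a homeomorphism, which is where one uses that $g_t$ depends continuously on $t$ and that the $\mu_k$ are continuous. (4) Shrink $U$ if necessary so that all of this stays inside the domain where Edwards--Kirby applies. (5) For the relative statement, run the same argument but feed in the \emph{relative} Edwards--Kirby deformation theorem, which produces an ambient isotopy preserving $N$ setwise when $g$ preserves $N$ and is close to $\id$; since the cutoff construction only reparametrizes the time variable, each $g_k$ again preserves $N$, giving local fragmentation for $\Homeo_0(M \rel N)$.

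The main obstacle I anticipate is step (3): defining $g_k$ by evaluating the isotopy at a spatially-varying time $\mu_k(x)$ and checking this is a well-defined homeomorphism with the claimed support. The clean way to handle this is not to evaluate the isotopy pointwise at varying times directly, but to use the standard trick of building, from the isotopy $g_t$ and the function $\mu_k$, a new homeomorphism via the formula $h_k(x) = g_{\mu_k(g_{\mu_{k-1}}^{-1}(x))} \circ g_{\mu_{k-1}}^{-1}(x)$ or its inverse, tracking carefully how the time functions must be pulled back along the earlier stages of the isotopy so that the supports nest correctly. This bookkeeping — making sure the composition telescopes and that $\supp(h_k)$ lands inside $E_k$ rather than a slightly larger set — is the delicate part; everything else (local contractibility, compactness of $M$, passing to the relative case) is either classical or formal. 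A secondary subtlety is the boundary case, but the Edwards--Kirby theorem is already stated for manifolds with boundary, so no extra work is needed there beyond citing it correctly.
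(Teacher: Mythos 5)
There is a genuine gap, and it sits exactly where you flagged it: step (3). The device of evaluating an ambient isotopy $g_t$ at a spatially varying time $\mu_k(x)$ is the standard fragmentation argument \emph{in the smooth category}, where one does not actually reparametrize the flow but rather multiplies the generating time-dependent vector field by a cutoff function and re-integrates; that is what guarantees the cut-off object is again a diffeomorphism. For homeomorphisms there is no generating vector field, and the map $x \mapsto g_{\mu(x)}(x)$ is in general \emph{not injective}, no matter how $C^0$-small the isotopy is and no matter how the time functions are pulled back along earlier stages. Concretely: $C^0$-smallness bounds $d(g_t(x),x)$ but gives no control on $d(g_s(x),g_t(x))$ for nearby $s,t$ beyond the triangle inequality, nor any lower bound on $d(g_t(x),g_t(y))$ in terms of $d(x,y)$; one can build an isotopy of displacement $\le \epsilon$ and points $p\neq q$ with $d(p,q)\le\epsilon$ such that $g_{\mu(p)}(p)=g_{\mu(q)}(q)$ for any prescribed continuous $\mu$ that is nonconstant near $p$. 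So your $g_k$ (or the variant $h_k$) need not be a homeomorphism, and no amount of bookkeeping about how the telescoping composes or where the supports land repairs this. This failure is precisely why fragmentation for $\Homeo_0(M)$ is a hard theorem requiring the torus trick, rather than a formal consequence of local contractibility.

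For the record, the paper does not reprove this statement: it cites Corollary 1.3 of Edwards--Kirby, which \emph{is} the fragmentation property, proved there by their handle/torus-trick machinery (and Remark 7.2 of the same paper for the relative case). If you want an argument rather than a citation, the correct reduction is not a time-cutoff of one global isotopy but an induction over the pieces of the cover using the support control in the Edwards--Kirby deformation theorem: choose compact sets $C_i \subset E_i$ covering $M$; for $g$ close to $\id$, the deformation theorem (applied to the embedding $g|_{E_1}$, the compact set $C_1$, and the open set $E_1$) produces a homeomorphism $g_1$ supported in $E_1$ with $g_1^{-1}g$ equal to the identity on a neighborhood of $C_1$ and still small; then repeat on $E_2,\dots,E_m$. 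Your step (5), reducing the relative case to the relative deformation theorem, is fine once the absolute argument is repaired along these lines.
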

 
\begin{proof} 
The proof for $\Homeo_0(M)$ is given in Corollary 1.3 of Edwards--Kirby \cite{EK}, it uses the topological torus trick.   The case of $\Homeo_0(M \rel N)$ also follows from Edwards and Kirby's work by using the relative version of their deformation theorem, this is explained in Remark 7.2 of \cite{EK}.
\end{proof} 

\bigskip
\noindent \textbf{Perfectness.}  Recall that a group $G$ is \emph{perfect} if any element can be written as a product of commutators.  We will use the following uniform version for homeomorphisms supported on embedded balls.    

\begin{proposition}[Uniform perfectness] \label{perfectness prop}
Let $B \subset M$ be an embedded open ball.  Then any $f \in \Homeo_0(M)$ with $\supp(f) \subset B$ can be written as $f = [a,b]$ where $\supp(a) \subset B$ and $\supp(b) \subset B$.
\end{proposition}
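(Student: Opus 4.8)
The plan is to use the classical ``infinite repetition trick.'' Since $M$ is compact, $\supp(f)$ is a compact subset of the open ball $B$; fix a homeomorphism $B \cong \R^n$ and, after rescaling, arrange that $\supp(f)$ is contained in the unit ball $U := \{\, |x| < 1 \,\}$. Everything constructed below will be supported in a fixed compact subset of $B$, hence extends by the identity to a homeomorphism of $M$, and I will not distinguish the two notationally.

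First I would build an auxiliary $g$, supported in a compact ball inside $B$, together with an interior point $p$ of $B$, so that the iterates $U_i := g^i(U)$, $i \ge 0$, are pairwise disjoint and collapse to $p$ --- meaning every neighborhood of $p$ contains all but finitely many $U_i$, and $\operatorname{diam}(U_i) \to 0$ in a chart metric. Concretely, pick pairwise disjoint closed round balls $\overline{U_0}, \overline{U_1}, \overline{U_2}, \dots$ with $U_0 = U$, shrinking to a point $p$ inside $B$, and let $g$ be any homeomorphism supported in a compact ball of $B$ with $g(U_i) = U_{i+1}$ for all $i$ and $g(p) = p$. Such a $g$ is produced by a routine hands-on construction: line the $U_i$ up along a segment terminating at $p$ and let $g$ push that segment monotonically into ever-smaller neighborhoods of $p$, checking that the resulting bijection is continuous at $p$ because the relevant diameters shrink.

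Then I would set
$$F := \prod_{i=0}^{\infty} g^{i} f g^{-i}.$$
The $i$-th factor is supported in $g^i(\supp f) \subset U_i$, the $U_i$ are pairwise disjoint, and $\operatorname{diam}(U_i) \to 0$; so the partial products (and their inverses) converge in the $C^0$ metric, and $F$ is a well-defined homeomorphism of $M$ equal to $g^i f g^{-i}$ on $U_i$ and to the identity off $\bigcup_i U_i$, with $\supp(F)$ contained in the compact set $\overline{\bigcup_i U_i} \subset B$. Since the factors of $F$ have pairwise disjoint supports they commute, and conjugating by $g$ shifts the index:
$$g F g^{-1} = \prod_{i=0}^{\infty} g^{i+1} f g^{-(i+1)} = \prod_{i=1}^{\infty} g^{i} f g^{-i},$$
whence $F = f \cdot (g F g^{-1})$ and therefore $f = F g F^{-1} g^{-1} = [F, g]$. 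Taking $a := F$ and $b := g$ finishes the proof: both are supported in $B$, and, being supported in a ball, both automatically lie in $\Homeo_0(M)$.

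The one genuinely technical point is the auxiliary step: producing a compactly supported $g$ whose iterates carry $\overline{U}$ through a sequence of pairwise disjoint balls collapsing to an interior point, and then verifying that the infinite product $F$ is a bona fide homeomorphism. Both are standard, the latter reducing to the observation that a product of homeomorphisms supported on pairwise disjoint balls of diameters tending to $0$ converges uniformly, as does the corresponding product of inverses.
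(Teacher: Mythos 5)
Your proposal is correct and is essentially Anderson's commutator trick, which is exactly the argument the paper gives: the paper's $a$ is your infinite product $F$ (defined pointwise on the disjoint translates) and its $b$ is your $g$. The only difference is that you spell out the convergence/continuity issue at the accumulation point $p$, which the paper leaves implicit.
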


\begin{proof} This result is ``folklore", the earliest proof known to the author is the following argument of Anderson \cite{Anderson}.  Suppose that $\supp(f) \subset B$.  Since $\supp(f)$ is compact and $B$ open, there exists $b \in \Homeo_0(M)$ with $\supp(b) \subset B$ and such that $b^n(\supp(f)) \cap b^m(\supp(f)) = \emptyset$ for any $m \neq n$.   Define $a$ by 
$$ a(x) = \left\{ \begin{array}{ll} b^{n} f b^{-n}(x) & \text{ if } x \in b^n(\supp(f)), \text{ for some } n  \\
x & \text{ otherwise} 
 \end{array} \right.$$
Then $\supp(a) \subset B$ and $[a, b] = f$. 
\end{proof} 

Anderson's argument can easily be modified to give a relative version for balls intersecting $\del M$ or intersecting an embedded submanifold $N$.  
To state this precisely, define a \emph{half ball} in $M^n$ to be a proper embedding of $\{(x_1, ... x_n) \in \B^n \mid x_n \geq 0\}$, i.e. with the image of $\B^n \cap \{x_n = 0\}$ in $\del M$; and similarly if $N^d \subset M^n$ is an embedded $d$-dimensional submanifold, define a \emph{relative ball} to be an embedding $\psi: \B^n \to M$ such that $\psi( \B^n \cap \R^d) = N^d \cap \psi(\B^n)$.  

\begin{proposition} \label{relative perfectness prop} (Uniform perfectness, relative case)
\begin{enumerate}[i)]
\item Let $M$ be a manifold of dimension at least $2$, and $B$ an open half-ball intersecting $\del M$.  Any $f \in \Homeo_0(M)$ with $\supp(f) \subset B$ can be written as $f = [a,b]$ where $a$ and $b$ are both supported in $B$.
\item Let $N \subset M$ be an embedded submanifold of dimension at least 1, and let $B$ be a relative ball in $M$.  Then any $f \in \Homeo_0(M \rel N)$ with $\supp(f) \subset B$ can be written as $f = [a,b]$ where $a, b \in \Homeo_0(M \rel N)$ are both supported in $B$.
\end{enumerate}
\end{proposition}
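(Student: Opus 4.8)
\emph{Proof proposal.} The plan is to re-run Anderson's ``infinite swindle'' from the proof of Proposition~\ref{perfectness prop}, checking at each step that the homeomorphisms it produces may be taken inside the relevant subgroup. Recall the shape of that argument: from $f$ supported in a ball one first builds an auxiliary $b$, supported in the same ball, with the iterated images $b^n(\supp f)$, $n \geq 0$, pairwise disjoint; then $a := \prod_{n \geq 0} b^n f b^{-n}$ is a well-defined homeomorphism supported in the ball (well-defined and continuous because the supports $b^n(\supp f)$ are disjoint and, with $b$ chosen suitably, shrink to a point), and since the factors of $a$ have pairwise disjoint supports a one-line computation gives $b a b^{-1} = \prod_{n \geq 1} b^n f b^{-n}$, hence $f = a\,(bab^{-1})^{-1} = [a,b]$. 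Apart from the construction of $b$ this is purely formal, and it transfers to the constrained setting for free: $f$ preserves $\del M$ (automatic in case (i)), resp.\ $N$ (case (ii)), so each $b^n f b^{-n}$ and hence $a$ does too. Thus the whole task is to produce $b$ inside the given half-ball or relative ball $B$, preserving the relevant locus, with $b^n(\supp f)$ pairwise disjoint, and lying in the identity component.

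For (i) work in the model $B = \{(y,t)\in\R^{n-1}\times\R : |(y,t)|<1,\ t\geq 0\}$, with $\del M\cap B=\{t=0\}$. Since $\supp f$ is a compact subset of the open set $B$ and $n\geq 2$, we may fix a point $q=(y_0,0)$ of the boundary face with $|y_0|<1$ and $q\notin\supp f$ --- this is exactly where $\dim M\geq 2$ enters, as it guarantees room on the face to avoid $\supp f$ even when $\supp f$ meets $\del M$. Choose pairwise disjoint half-balls $B_0,B_1,\dots\subset B$ with $\supp f\subset\operatorname{int}(B_0)$, each centered on $\{t=0\}$, with centers converging to $q$ and radii to $0$, all contained in a connected region $R\subset B$ whose closure meets $\del M$ in a disc interior to the face. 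A routine interpolation gives a homeomorphism $b$ supported in $R$, preserving $\{t=0\}$, carrying $B_i$ onto $B_{i+1}$ and fixing $q$; then $b^n(\supp f)\subset B_n$ are pairwise disjoint and their diameters tend to $0$. Shrinking $R$ to a point along an isotopy compatible with the product structure along the face shows $b\in\Homeo_0(M)$, and the same for the $a$ produced by the swindle. Case (ii) is identical with $\{t=0\}$ replaced by the model $\B^n\cap\R^d$ of $N$: here $d\geq 1$ supplies the positive-dimensional locus along which to slide the $B_i$ to an accumulation point $q\in N\setminus\supp f$, and all maps are arranged to preserve $\R^d$, giving $a,b\in\Homeo_0(M\rel N)$.

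The only genuinely delicate point --- and the main, if mild, obstacle --- is this construction of $b$: one must exhibit the wandering sequence of (half-)balls tamely embedded in $B$, accumulating only at the single face point $q$, and realize the shift by a homeomorphism that is the identity near the frontier of $B$, preserves $\del M$ resp.\ $N$, and is joined to the identity within the appropriate group. In the topological category each of these is standard once the sub-balls are taken with a common product structure along the constraining submanifold; and it is precisely in this bookkeeping that the hypotheses $\dim M\geq 2$ in (i) and $\dim N\geq 1$ in (ii) are used, since otherwise $\supp f$ could meet the constraining locus in a way that no admissible $b$ with disjoint iterates could undo.
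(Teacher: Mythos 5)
Your proposal is correct and follows exactly the route the paper takes: the paper's own justification is the one-sentence remark that Anderson's commutator trick goes through once $b$ is chosen to translate $\supp(f)$ to a sequence of disjoint half-balls (resp.\ relative balls) inside $B$, which is precisely the construction you carry out. You supply more detail than the paper does (the choice of the accumulation point $q$ on the face or on $N$, which is where the dimension hypotheses enter, and the verification that $a,b$ lie in the identity component), and these details are accurate.
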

The proof is exactly the same -- it requires no changes if $\supp(f)$ is disjoint from $\del M$ or $N$, and if $\supp(f) \cap \del M \neq \emptyset$ or $\supp(f) \cap N \neq \emptyset$ one simply takes the iterates of $b$ to translate $\supp(f)$ to a collection of disjoint relative or half-balls in $B$.   

\begin{remark}[Perfectness of homeomorphism groups].
The reader will note that Proposition \ref{perfectness prop} together with fragmentation implies that the groups $\Homeo_0(M)$ and $\Homeo_0(M \rel N)$ are perfect, though not necessarily \emph{uniformly} perfect -- a much more subtle question. \end{remark}

%-------------------

%---------------------------------------------------------------------------------
\section{Proof of Theorem \ref{main cont thm}} \label{main pf sec}

For the rest of this section, we fix a separable topological group $H$, and assume that $\phi: \Homeo_0(M) \to H$ is a homomorphism.   
For simplicity, we first treat only the case where $M$ is closed; modifications for the case where $\del M \neq \emptyset$ and the relative case of $\Homeo_0(M \rel N)$ are discussed in Section \ref{modification sec}, along with a comment for noncompact $M$.

The proof is somewhat involved, so we have divided it into three major steps.  The first is general set-up; the second a ``localized" version of continuity (for homeomorphisms with support in a small ball), and the third step improves this local result to a global version by careful use of the fragmentation property.  There is a  delicate balancing act between steps 2 and 3; in particular, it will be necessary to construct a particular kind of \emph{efficient} cover of the manifold $M$ to use in the fragmentation argument.

\subsection*{Step 1: set-up for the proof}

Since $\phi$ is a group homomorphism, it suffices to show continuity at the identity.  In other words, we need to prove the following. 
\begin{condition} \label{main condition}
For any neighborhood $V$ of the identity in $H$, there exists a neighborhood $U$ of the identity in $\Homeo_0(M)$ such that $U \subset \phi^{-1}(V)$.  
\end{condition}

\noindent We first use the Baire category theorem to extract an ``approximate" version of condition \ref{main condition}.   

\begin{lemma}  \label{Baire lemma}
Let $V$ be a neighborhood of the identity in $H$.  There exists a neighborhood $U$ of the identity in $\Homeo_0(M)$ such that $U$ is contained in the \emph{closure} of $\phi^{-1}(V)$. 
\end{lemma}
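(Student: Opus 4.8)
The plan is to run the classical Baire-category argument that underlies essentially all automatic-continuity proofs of Rosendal--Solecki type. The key structural fact we need is that $\Homeo_0(M)$ is a \emph{Steinhaus-type} group relative to the homomorphism $\phi$: informally, we want to write $\Homeo_0(M)$ as a countable union of translates of a set of the form $\overline{W}W^{-1}$, where $W = \phi^{-1}(V_0)$ for a suitably chosen symmetric neighborhood $V_0$ of the identity in $H$. More precisely, first I would choose a symmetric open neighborhood $V_0$ of the identity in $H$ with $V_0 V_0 \subset V$ (possible since $H$ is a topological group). Because $H$ is separable, we may cover $H$ by countably many left translates $h_n V_0$, $n \in \N$, of $V_0$. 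Pulling back under $\phi$, we obtain $\Homeo_0(M) = \bigcup_{n} \phi^{-1}(h_n V_0)$. Each set $\phi^{-1}(h_n V_0)$ is either empty or a left translate $f_n \phi^{-1}(V_0)$ of $W := \phi^{-1}(V_0)$ by some $f_n$ with $\phi(f_n) \in h_n V_0$; in any case $\Homeo_0(M)$ is covered by countably many left translates of $W$.

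Next I would invoke the Baire category theorem. Since $\Homeo_0(M)$ is completely metrizable (via the metric $D(f,g) = d(f,g) + d(f^{-1},g^{-1})$ noted in Section~\ref{background sec}), it is a Baire space. A countable union of translates of $W$ covers it, and left translation is a homeomorphism of $\Homeo_0(M)$, so $W$ cannot be nowhere dense; hence $\overline{W}$ has nonempty interior. Fix $f_0$ and an open set $O$ with $f_0 \in O \subset \overline{W}$. Then $U_0 := f_0^{-1} O$ is an open neighborhood of the identity contained in $f_0^{-1}\overline{W} = \overline{f_0^{-1} W}$. Now for any $g \in U_0$ we have $f_0 g \in \overline{W}$ and also $f_0 \in \overline{W}$, so $g \in \overline{W}^{-1}\overline{W}$... but I want something landing in $\overline{\phi^{-1}(V)}$. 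The cleaner route: set $U := U_0^{-1} U_0$, an identity neighborhood. For $g = g_1^{-1} g_2$ with $g_1, g_2 \in U_0$, both $f_0 g_1$ and $f_0 g_2$ lie in $\overline{W} = \overline{\phi^{-1}(V_0)}$, so $g = (f_0 g_1)^{-1}(f_0 g_2) \in \overline{\phi^{-1}(V_0)}^{-1}\,\overline{\phi^{-1}(V_0)} \subset \overline{\phi^{-1}(V_0)^{-1}\phi^{-1}(V_0)}$, using continuity of multiplication and inversion in $\Homeo_0(M)$ to push the closures outside. Finally $\phi^{-1}(V_0)^{-1}\phi^{-1}(V_0) \subset \phi^{-1}(V_0^{-1}V_0) = \phi^{-1}(V_0 V_0) \subset \phi^{-1}(V)$, since $\phi$ is a homomorphism and $V_0$ is symmetric. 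Therefore $U \subset \overline{\phi^{-1}(V)}$, which is exactly the claim.

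I expect the only genuine subtlety to be the bookkeeping around closures and the non-symmetry issues: one must be careful that $\phi$ is not assumed continuous, so one cannot move $\phi^{-1}$ past closures, but one \emph{can} move the closure past the continuous group operations of $\Homeo_0(M)$, and one can move $\phi^{-1}$ past the algebraic operations of $H$ because $\phi$ is a homomorphism. Ordering these two manipulations correctly is the whole content. A minor point worth stating explicitly is that $\Homeo_0(M)$ is Baire — this is immediate from complete metrizability, already recorded in the excerpt. Everything else (separability of $H$, the covering by countably many translates, nonmeagerness of at least one translate of $W$) is routine. I would present the argument in essentially the four moves above: choose $V_0$ with $V_0V_0 \subset V$ and $V_0 = V_0^{-1}$; cover $\Homeo_0(M)$ by countably many translates of $W = \phi^{-1}(V_0)$; apply Baire to find an identity neighborhood $U_0 \subset \overline{f_0^{-1}W}$ for some $f_0$; and conclude $U := U_0^{-1}U_0 \subset \overline{\phi^{-1}(V)}$.
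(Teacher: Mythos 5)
Your overall strategy is the same Baire-category argument the paper uses, and your closure bookkeeping at the end is handled correctly: in a topological group one does have $\overline{A}^{-1}\,\overline{B}\subset\overline{A^{-1}B}$, so the passage from $U_0^{-1}U_0$ to $\overline{\phi^{-1}(V_0^{-1}V_0)}$ is legitimate (and is a slightly cleaner way of phrasing the paper's ``$WW^{-1}$ is dense in a neighborhood of the identity''). The one step that fails as written is the covering step. You assert that each nonempty $\phi^{-1}(h_nV_0)$ is a left translate $f_n\phi^{-1}(V_0)$ of $W$. That identity holds only when $h_n$ itself lies in the image of $\phi$; for a general countable dense subset $\{h_n\}$ of $H$ you cannot arrange this, and you cannot simply choose the $h_n$ inside the image either, since $H$ is only assumed separable (not metrizable), and subspaces of separable spaces need not be separable. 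With only $\phi(f_n)\in h_nV_0$, what you actually get is $h_nV_0\subset\phi(f_n)V_0^{-1}V_0$, hence $\phi^{-1}(h_nV_0)\subset f_n\phi^{-1}(V_0^2)$: the countable cover is by translates of $\phi^{-1}(V_0^2)$, not of $\phi^{-1}(V_0)$.

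The repair is exactly the device the paper uses: choose $V_0$ symmetric with $V_0^4\subset V$ (rather than $V_0^2\subset V$) and run your argument with $W:=\phi^{-1}(V_0^2)$. Then $\Homeo_0(M)=\bigcup_n f_nW$, Baire gives $\overline{W}$ nonempty interior, and your final computation yields $U\subset\overline{W^{-1}W}\subset\overline{\phi^{-1}(V_0^4)}\subset\overline{\phi^{-1}(V)}$. Since the statement of the lemma places no constraint on which power of $V_0$ lands in $V$, this costs nothing. Apart from this constant-tracking slip, your proof coincides with the paper's.
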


\begin{remark} The proof given below works generally for homomorphisms from any Polish group to a separable group; see e.g. \cite{EM} for one instance of this (where it is called ``Baire category continuity") and \cite{RS} for another in the context of groups with the ``Steinhaus property". 
\end{remark}

\begin{proof}[Proof of Lemma \ref{Baire lemma}]
Take a smaller neighborhood of the identity $V_0 \subset V$ such that $V_0$ is symmetric (i.e. $v \in V_0 \Leftrightarrow v^{-1} \in V_0$), and such that $V_0^4 \subset V$.    Let $\{h_i \}$ be a countable dense subset of $H$, so that 
$$H = \bigcup_i h_i V_0.$$
Let $W = \phi^{-1}(V_0^2)$.  
For each translate $h_i V_0$ that intersects the image of $\Homeo_0(M)$, choose an element $\phi(g_i) \in h_i V_0$.  Then $\phi(g_i) = h_i v_i$ for some $v_i \in V$, and so 
$$h_i V_0 = \phi(g_i) v_i^{-1} V_0 \subset \phi(g_i) V_0^2.$$
Thus 
$$H = \bigcup_i \phi(g_i) V_0^2$$
and, taking pre-images, we have 
$$\Homeo_0(M) = \bigcup_i g_i W.$$
Since $\Homeo_0(M)$ is a Baire space, it cannot be covered by countably many nowhere dense sets.  Thus, $W$ is dense in the neighborhood of some $g \in \Homeo_0(M)$, so 
$$WW^{-1} = \phi^{-1}(V_0^4) \subset \phi^{-1}(V)$$ 
is dense in some neighborhood of the identity in $\Homeo_0(M)$.   This proves the lemma.  
\end{proof}

\noindent Of course, improving ``dense in a neighborhood of the identity" to ``contains a neighborhood of the identity" is a nontrivial matter and the main goal of this work!

%------------------------------
\subsection*{Step 2: A localized version (after Rosendal)} 

As in Step 1, assume that we have fixed a homomorphism $\phi: \Homeo_0(M) \to H$, and a neighborhood $V$ of the identity in $H$, with the aim of showing that $\phi$ satisfies condition \ref{main condition}.  

The ``localized version" of Condition \ref{main condition} that we aim to prove here states, loosely speaking, that a homeomorphism $f \in \Homeo_0(M)$ with sufficiently small support lies in $\phi^{-1}(V)$.    The precise statement that we will use in the next step is given in Lemma \ref{multi lemma} below.   Our strategy is to build up to this statement gradually, using a series of lemmata guided by Rosendal's work in \cite{Rosendal 2man}.   

\begin{notation} \label{W notation}
As in Lemma \ref{Baire lemma}, we start by fixing a smaller, symmetric neighborhood of the identity $V_0 \subset V$ such that $V_0^8 \subset V$.  Let $W = \phi^{-1}(V_0)$.  
\end{notation}  

The first lemma is a very rough version of our end goal.  It states that in any neighborhood of any point of $M$, we can find an open ball so that all diffeomorphisms supported on that ball are \emph{restrictions} of elements in $\phi^{-1}(V_0^2) \subset \phi^{-1}(V)$.    

\begin{lemma}  \label{restriction lemma} 
Let $B \subset M$ be an embedded ball.  
There exists a ball $B' \subset B$ such that for every $f \in \Homeo_0(M)$ with $\supp(f) \subset B'$, there is an element $w _f \in W^2$ with $\supp(w) \subset B$ and such that the restriction of $w_f$ to $B'$ agrees with $f$.  
\end{lemma}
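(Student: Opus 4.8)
The plan is to extract the desired ball $B'$ from the conclusion of Lemma \ref{Baire lemma}, using a compactness-and-translation argument that is the standard device for turning ``dense in a neighborhood'' into an honest statement about small-support homeomorphisms. First I would apply Lemma \ref{Baire lemma} with the neighborhood $V_0$ in place of $V$ (shrinking $V_0$ if necessary so that, say, $V_0^2 \subset V$ and $V_0$ symmetric, matching Notation \ref{W notation}): this yields a neighborhood $U_0$ of the identity in $\Homeo_0(M)$ such that $U_0 \subset \overline{\phi^{-1}(V_0)} = \overline{W}$. The content of the lemma is then that a homeomorphism with support in a sufficiently small ball $B' \subset B$ can be written as a product of two elements of $U_0$ (hence two elements of $\overline{W}$), and then a further approximation argument inside $\overline{W}$ replaces these by genuine elements of $W$ whose product restricts correctly to $B'$.

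More concretely, here is the sequence of steps. (1) Choose $B' \subset B$ small enough that there is a homeomorphism $h$ supported in $B$ with $h(B')$ disjoint from $B'$, and so that any $f$ supported in $B'$ satisfies $f \in U_0$ and $h f h^{-1} \in U_0$ — this last is possible because $B'$ can be taken inside an arbitrarily small metric ball, and homeomorphisms with support in a small ball are $C^0$-close to the identity; here I use compactness of $M$ only to ensure the $C^0$ metric behaves uniformly. Actually the cleaner route, following Rosendal, is: since $U_0 \subset \overline{W}$ and $W$ is $\phi$-pulled back from an open set, for the fixed $f$ supported in $B'$ we can find $w_1 \in W$ arbitrarily $C^0$-close to $f$; then $w_1^{-1} f$ is supported in a slightly larger ball and is $C^0$-small, so it too lies in $U_0 \subset \overline{W}$, giving $w_2 \in W$ close to $w_1^{-1} f$. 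The product $w_1 w_2 \in W^2$ is then $C^0$-close to $f$ but need not agree with $f$ on $B'$. (2) To upgrade ``close'' to ``restricts to $f$ on $B'$'', use a conjugation/commutator trick: pick the auxiliary $h$ supported in $B$ pushing $B'$ off itself, so that $h w_1 h^{-1}$ is supported away from $B'$ and commutes appropriately; multiplying $f$ by a correction term built from $h$-conjugates of the $w_i$ (which stays in $W^2$ because $\phi(h w_i h^{-1})$ and $\phi(w_i)$ land in the same small neighborhood up to the controlled error) produces an element of $W^2$ supported in $B$ that is \emph{exactly} $f$ on $B'$.

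I expect the main obstacle to be step (2): arranging that the correction which fixes the discrepancy between the product $w_1 w_2$ and $f$ on $B'$ can itself be absorbed into $W^2$ without enlarging the neighborhood beyond $V_0^2$, and that all the relevant supports stay inside $B$. This is exactly where the choice of $B'$ much smaller than $B$ (leaving room for the translating homeomorphism $h$ and its iterates) is essential, and where one must be careful that conjugating by $h$ moves $\phi$-images only within a prescribed $V_0$-neighborhood — which follows because $\phi(h w h^{-1}) = \phi(h)\phi(w)\phi(h)^{-1}$ and, although $\phi(h)$ is not yet known to be close to the identity, one instead applies the density conclusion to the \emph{conjugated} family, i.e. notes that $h U_0 h^{-1}$ is again a neighborhood of the identity contained in $\overline{h W h^{-1}}$ and plays the two approximations off against each other. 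Managing this bookkeeping of neighborhoods (why $V_0^8 \subset V$ is the safe choice in Notation \ref{W notation}) is the delicate part; the rest is the routine observation that small support implies $C^0$-small, together with the ball-pushing construction already used in the proof of Proposition \ref{perfectness prop}.
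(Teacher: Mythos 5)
There is a genuine gap, and it sits exactly where you suspected. Your plan is to approximate: take $w_1 \in W$ that is $C^0$-close to $f$, then correct the discrepancy. But $C^0$-closeness gives you nothing about the homomorphism $\phi$ (which is not yet known to be continuous -- being in the closure $\overline{W}$ says nothing about where $\phi$ sends you), and nothing about supports (elements of $W = \phi^{-1}(V_0)$ have no support control whatsoever, so ``$w_1^{-1}f$ is supported in a slightly larger ball'' is false). Worse, the ``correction term'' that would turn an approximation of $f$ into something agreeing with $f$ exactly on $B'$ is itself a homeomorphism of small support, and showing that such an element can be absorbed into a bounded power of $W$ is precisely the statement you are trying to prove -- the argument is circular. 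The paper explicitly flags that upgrading ``dense in a neighborhood'' to ``contains a neighborhood'' is the whole difficulty; no amount of conjugating by a ball-pushing $h$ resolves it, since $\phi(h)$ is an arbitrary element of $H$ and conjugation by it can distort $V_0$ arbitrarily.

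The idea your proposal is missing is the diagonal (``infinite swindle'') argument over the countable cover by translates. From separability one gets $\Homeo_0(M) = \bigcup_i g_i W$ for countably many $g_i$ (this is the only output of Lemma \ref{Baire lemma} that is used here -- not the closure statement). One then chooses infinitely many disjoint balls $B_i$ with closures in $B$, and argues by contradiction: if no $B_i$ works for the translate $g_iW$, pick a bad $f_i$ supported in each $B_i$ and glue them into a single homeomorphism $F$; $F$ lies in some $g_iW$ and restricts to $f_i$ on $B_i$, a contradiction. This produces a single ball $B'$ and a single translate $g_iW$ such that every $f$ supported in $B'$ agrees on $B'$ with some $w_1 \in g_iW$ supported in $B$; applying the same statement to $f = \id$ gives $w_2 \in g_iW$ restricting to the identity on $B'$, and $w_f = w_2^{-1}w_1 \in W g_i^{-1} g_i W = W^2$ is exactly $f$ on $B'$ with support in $B$. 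Note this cancellation of $g_i$ is why the conclusion lands in $W^2$ with no approximation needed, and why the ``restriction'' caveat (rather than $w_f = f$) is the honest output at this stage.
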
 

\begin{proof} 
Let $B \subset M$ be an embedded ball.  
The argument from the proof of Lemma \ref{Baire lemma} implies that there exists a countable set $\{g_i\} \subset \Homeo_0(M)$ such that 
$$\Homeo_0(M) = \bigcup_i g_i W.$$
We first prove a related claim for these translates of $W$.  

\begin{claim}  \label{kiW claim}
There exists a ball $B' \subset B$, and a left translate $g_i W$ such that if $\supp(f) \subset B'$, then there exists $w_f \in g_i W$ such that 
\begin{enumerate}[i)]
\item $\supp(w_f) \subset B$, and 
\item the restriction of $w_f$ to $B'$ agrees with $f$.
\end{enumerate}
\end{claim}

\noindent \textit{Proof of claim}.  Let $B_i$, $i=1, 2,...$ be a sequence of disjoint balls with disjoint closures and with the closure of $\bigcup \limits_{i=1}^\infty B_i$ contained in $B$.  

We will show that for some $i$, every $f \in \Homeo_0(M)$ with $\supp(f) \subset B_i$ agrees with the restriction of an element of $g_i W$ supported on $B$.   

Suppose for contradiction that this is not the case.  Then there is a sequence $f_i \in \Homeo_0(M)$ with $\supp(f_i) \subset B_i$ and such that $f_i$ does not agree with the restriction to $B_i$ of any element of $g_i W$ supported on $B$.  Using this sequence of counterexamples, define a homeomorphism $F(x)$ by 
$$F(x) = \left\{ \begin{array}{ll} f_i(x) & \text{ if } x \in B_i \text{ for some } i  \\
x & \text{ otherwise} 
 \end{array} \right.$$
Since the translates of $W$ cover $\Homeo_0(M)$, there is some $g_i$ such that $F \in g_i W$.  But by construction $F$ restricts to $f_i$ on $B_i$ -- this gives a contradiction and proves the claim.  \qed

To finish the proof of Lemma \ref{restriction lemma}, let $B' \subset B$ be the ball given by Claim \ref{kiW claim}.  Let $f \in \Homeo_0(M)$ satisfy $\supp(f) \subset B'$.  Then $f$ is the restriction to $W$ of some $w_1 \in g_iW$.   Since $\id \in \Homeo_0(M)$ has trivial support, Claim \ref{kiW claim} implies that there exists some $w_2 \in g_iW$ restricting to the identity on $B'$, so define
$$w_f :=  w_2^{-1}w_1 \in W g_i^{-1} g_i W = W^2.$$
Then $\supp(w_f) \subset B$ and $w_f$ restricted to $B'$ agrees with $f$.  

\end{proof} 

Lemma \ref{restriction lemma} states that certain homeomorphisms with small support are \emph{restrictions} of elements of $W^2 \subset \phi^{-1}(V)$; recall that our goal is to show that homeomorphisms with small support \emph{are} elements of $\phi^{-1}(V)$.  We remove the ``restriction" condition now, at the cost of enlarging $W^2$ to $W^8$, by using a trick with commutators.   Since $W^8 \subset \phi^{-1}(V)$, this will achieve our goal.  

\begin{lemma}  \label{local lemma} 
Let $B \subset M$ be an embedded ball.  There exists a ball $B'' \subset B$ such that, if $f \in G$ has $\supp(f) \subset B''$, then $f \in W^8$.  
\end{lemma}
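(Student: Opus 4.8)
The plan is to use Lemma~\ref{restriction lemma} to find a ball $B'$ on which every compactly supported homeomorphism is a restriction of an element of $W^2$, then shrink to a smaller ball $B''$ on which the ``restriction'' can be promoted to an actual equality by a commutator trick. First I would take the ball $B'$ produced by Lemma~\ref{restriction lemma} applied to $B$, and choose a smaller ball $B'' \subset B'$ together with a homeomorphism $\theta \in \Homeo_0(M)$ supported in $B'$ such that $\theta(B'')$ is disjoint from $B''$ (this is possible since $\overline{B''}$ is compact and $B'$ is an open ball, so there is room to push $B''$ off itself inside $B'$). The point of the displacement is the standard fact that if $\supp(f) \subset B''$ and $\supp(g) \subset B''$ and $\theta(B'') \cap B'' = \emptyset$, then $f$ commutes with $\theta g \theta^{-1}$, and more usefully, if $f$ happens to equal the restriction to $B''$ of some $w$ supported in $B'$, then one can write $f$ as a commutator of the form $[w, \theta']$ or as a product of conjugates of $w$ and $w^{-1}$, all of which lie in controlled powers of $W$.

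More precisely, here is the mechanism I expect to use. Given $f$ with $\supp(f) \subset B''$, Lemma~\ref{restriction lemma} gives $w_f \in W^2$ with $\supp(w_f) \subset B$ (I would actually re-run it so that $\supp(w_f) \subset B'$, by starting from $B'$ in place of $B$) and $w_f|_{B''} = f$. Now pick $\theta \in \Homeo_0(M)$ with $\supp(\theta) \subset B'$ and $\theta(B'') \cap \overline{B''} = \emptyset$. Then $w_f \,(\theta w_f^{-1} \theta^{-1})$ agrees with $f$ on $B''$ (since $\theta w_f^{-1}\theta^{-1}$ is the identity there) and agrees with the identity outside the set $B'' \cup \theta(B'')$, but actually I want the cleaner statement: the commutator $[w_f, \theta] = w_f \theta w_f^{-1} \theta^{-1}$ restricted to $B''$ is $f$, and restricted to $\theta(B'')$ is $\theta f^{-1}\theta^{-1}$, and is the identity elsewhere. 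This is not yet $f$. The fix is to also displace $f^{-1}$: choose $\theta$ so that additionally $f$ supported in $B''$ and $w_f$ supported in $B'$ interact correctly, and observe that $f = w_f \cdot (w_f^{-1} f)$ where $w_f^{-1} f$ is supported in $B' \setminus B''$ — no, that is not supported off $B''$ either. The genuinely correct move, which I would write out, is: since $w_f|_{B''} = f$ and $\supp(f) \subset B''$, the element $w_f^{-1} f$ has support in $\supp(w_f) \setminus B'' \subset B'$ and is the identity on $B''$; pick $\theta$ supported in $B'$ moving this support off itself — then $w_f^{-1} f$ is a commutator $[\,\cdot\,,\theta]$ of elements of $W^2$ and $W^0$, hence lies in $(W^2)^2(W^2)^{-2}$-ish, i.e.\ a bounded power of $W$; combined with $w_f \in W^2$ this puts $f$ in $W^8$ after bookkeeping. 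Alternatively, and more in the spirit of ``removing the restriction,'' I would use that $f \cdot w_f^{-1}$ is supported in $B' \setminus B''$, conjugate it by a $\theta$ that displaces $B' \setminus B''$ within $B'$ to write it as a commutator, and track the word length.

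The key steps, in order: (1) invoke Lemma~\ref{restriction lemma} with the ball $B'$ to get a restriction-realizing element $w_f \in W^2$ supported in $B'$; (2) choose, once and for all, a shrinking $B'' \subset B'$ and a displacing homeomorphism $\theta$, supported in $B'$, with $\theta(B'')$ disjoint from $\overline{B''}$ — here I also want $\theta$ chosen so that $\theta(\overline{B'}) $ stays inside a slightly larger ball still contained in $B$, so that all conjugates $\theta w_f \theta^{-1}$ are supported in $B$; (3) write $f$ as an explicit product of $w_f^{\pm 1}$ and $\theta w_f^{\pm 1}\theta^{-1}$ using the disjointness of supports, observing that each conjugate $\theta w_f^{\pm1}\theta^{-1}$ lies in $\theta W^2 \theta^{-1}$ — but note that we do \emph{not} know $\theta \in W$, so this conjugate need not lie in any power of $W$! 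This is the main obstacle, and the way around it is exactly Rosendal's commutator trick: one never conjugates $w_f$ by an arbitrary $\theta$, but instead observes that the \emph{commutator} $[w_f,\theta]$ can itself be realized as a single homeomorphism whose support lies in a ball, hence — by re-applying Lemma~\ref{restriction lemma} (or rather the restriction property already established) to \emph{that} ball — equals a restriction of an element of $W^2$; iterating this ``commutator $=$ restriction of $W^2$'' observation a bounded number of times and keeping $f$ inside $B''$ throughout expresses $f$ itself as an element of $W^2 \cdot W^2 \cdot W^2 \cdot W^2 = W^8$. So the real content of step (3) is setting up the balls so that the commutator identity $[w_f, \theta]|_{B''} = f$ holds and the support of $[w_f,\theta]$ is contained in a ball to which Lemma~\ref{restriction lemma}'s conclusion applies. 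I expect the bookkeeping of which powers of $W$ appear to be the fiddly part, and the choice of the nested balls $B'' \subset B' \subset B$ together with $\theta$ to be the part requiring care, but no step should need more than the fragmentation-free tools already developed (disjoint-support commutation, uniform perfectness is not even needed here, only the restriction lemma and elementary support manipulations).
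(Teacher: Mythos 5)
There is a genuine gap, and you put your finger on it yourself without closing it: the displacing homeomorphism $\theta$ is not known to lie in any power of $W$, so neither $\theta w_f^{\pm1}\theta^{-1}$ nor any commutator $[\,\cdot\,,\theta]$ is controlled. Your two proposed escapes both fail for the same reason. First, the element $w_f^{-1}f$ is indeed supported in (the closure of) $B'\setminus B''$, but exhibiting it as a commutator $[\,\cdot\,,\theta]$ does not place it in a bounded power of $W$, since the entries of that commutator are not in powers of $W$; and knowing $w_f^{-1}f\in W^k$ is equivalent to knowing $f\in W^{k+2}$, which is circular. Second, re-applying Lemma~\ref{restriction lemma} to the support of $[w_f,\theta]$ only ever produces \emph{restrictions} of elements of $W^2$, never memberships in $W^2$; iterating ``commutator $=$ restriction of $W^2$'' any finite number of times still leaves you with a restriction statement, so the loop never closes. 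The assertion that the iteration terminates in $W^8$ is exactly the content that is missing.

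The ingredient that actually resolves this is the one you explicitly set aside: uniform perfectness (Proposition~\ref{perfectness prop}). The paper applies Lemma~\ref{restriction lemma} \emph{twice} to get nested balls $B''\subset B'\subset B$, writes $f=[a,b]$ with $a,b$ supported in $B''$ via Anderson's trick, and then replaces $a$ by $w_a\in W^2$ (supported in $B'$, agreeing with $a$ on $B''$) and $b$ by $w_b\in W^2$ (supported in $B$, agreeing with $b$ on $B'$). Writing $w_a=au$ and $w_b=bv$ with $\supp(u)\subset \overline{B'\setminus B''}$ and $\supp(v)\subset\overline{B\setminus B'}$, the disjointness of the supports of $u,v$ from each other and from $\supp(a),\supp(b)$ gives $[w_a,w_b]=[a,b]=f$, hence $f\in W^8$. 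In other words, perfectness is precisely what lets you replace your uncontrolled $\theta$ by elements that the restriction lemma certifies to be in $W^2$: the ``commutator trick'' is applied to the decomposition $f=[a,b]$, not to a displacement of $w_f$. Your setup of the nested balls is right, but the proof cannot be completed with ``disjoint-support commutation and the restriction lemma'' alone.
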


\begin{proof}[Proof of Lemma \ref{local lemma}]
Let $B \subset M$ be an embedded ball.  Apply Lemma \ref{restriction lemma} to find a ball $B' \subset B$ such that if $\supp(f) \subset B'$, then $f$ agrees on $B'$ with the restriction of an element $w_f \in W^2$ with $\supp(w_f) \subset B$.  

Now apply Lemma \ref{restriction lemma} to $B'$ to find a smaller ball $B'' \subset B' \subset B$ such that if $f \in \Homeo_0(M)$ has $\supp(f) \subset B''$, then $f$ agrees on $B''$ with the restriction of an element $w_f \in W^2$ with $\supp(w_f) \subset B'$.

Let $f$ have support in $B''$. Using Proposition \ref{perfectness prop}, write $f=[a,b]$, where $\supp(a) \subset B''$ and $\supp(b) \subset B''$.
By Lemma \ref{restriction lemma}, there exists $w_a \in W^2$ with $\supp(w_a) \subset B'$ and such that the restriction of $w_a$ to $B''$ agrees with $a$.  There also exist $w_b \in W^2$ with $\supp(w_b) \subset B$ and such that the restriction of $w_b$ to $B'$ agrees with $b$.  Since $\supp(w_a) \cap \supp(w_b) \subset B''$, we have $[a, b] = [w_a, w_b]$ and hence 
$$f = [a, b] = [w_a, w_b] \in W^8.$$
\end{proof}

\noindent Summarizing our work so far, we shown the following. 
\begin{quote}
\textit{For any embedded ball $B \subset M$, there exists a ball $B' \subset B$ such that, if $\supp(f) \subset B'$ then $f \in \phi^{-1}(V)$.}
\end{quote}

In other words, homeomorphisms that are supported in $B'$ have image close to the identity under $\phi$.   At this point, the natural (naive) strategy to finish the proof would be to try and use fragmentation to write any homeomorphism close to the identity as a bounded product of homeomorphisms supported on balls ``like" $B'$.  Unfortunately, Lemma \ref{local lemma} gives us no control on the size of $B'$, so we do not yet have any means to reasonably cover $M$ with a bounded number of balls that have this property. 

To remedy this, we first strengthen Lemma \ref{local lemma} to a similar statement for disjoint unions of balls, getting us closer to a genuine cover of $M$.  

\begin{lemma}  \label{multi lemma}
Let $\{ B_\alpha \}$ be a finite collection of disjoint open balls in $M$.  Then there exist open balls $B''_\alpha \subset B_\alpha$ such that, if $f \in \Homeo_0(M)$ has $\supp(f) \subset B''_\alpha$, then $f \in  \phi^{-1}(V).$
\end{lemma}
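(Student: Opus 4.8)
The plan is to bootstrap Lemma~\ref{local lemma} from a single ball to a finite disjoint union by re-running the commutator trick simultaneously on all the balls at once, exactly as in the single-ball case but tracking supports carefully across the disjoint pieces. The key observation is that Lemma~\ref{restriction lemma} and its proof (via Claim~\ref{kiW claim}) never used that $B$ was a single ball in an essential way: the sequence of disjoint ``test balls'' $B_i$ used in the Baire-category step can just as well be chosen inside a disjoint union $\bigsqcup_\alpha B_\alpha$, and the homeomorphism $F$ built from the counterexamples $f_i$ still makes sense since its pieces have disjoint supports. So the first step is to state and prove the analogue of Lemma~\ref{restriction lemma} for a disjoint family $\{B_\alpha\}$: there exist balls $B'_\alpha \subset B_\alpha$ such that any $f$ with $\supp(f) \subset \bigcup_\alpha B'_\alpha$ agrees on $\bigcup_\alpha B'_\alpha$ with the restriction of some $w_f \in W^2$ having $\supp(w_f) \subset \bigcup_\alpha B_\alpha$. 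Equivalently, one can simply apply Lemma~\ref{restriction lemma} to \emph{each} $B_\alpha$ individually to get $B'_\alpha$ and $w_f^{(\alpha)} \in W^2$, but the naive approach of multiplying the $w_f^{(\alpha)}$ together costs a factor of $W^2$ per ball, which is exactly what must be avoided. So the disjoint-union version of the restriction lemma really must be proved directly from the Baire argument, where the cost stays at $W^2$ regardless of the number of balls because a single element $F$ encodes all the counterexamples at once.

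Granting the disjoint-union restriction lemma, the second step mimics the proof of Lemma~\ref{local lemma} verbatim. Apply the disjoint restriction lemma once to $\{B_\alpha\}$ to get $\{B'_\alpha\}$, then apply it again to $\{B'_\alpha\}$ to get $\{B''_\alpha\}$ with the corresponding nesting $B''_\alpha \subset B'_\alpha \subset B_\alpha$. Given $f$ with $\supp(f) \subset \bigcup_\alpha B''_\alpha$, decompose $f = f_1 \cdots$ — actually, since the $B''_\alpha$ are disjoint, $f$ factors as a commuting product $f = \prod_\alpha f_\alpha$ where $\supp(f_\alpha) \subset B''_\alpha$, and then Proposition~\ref{perfectness prop} writes each $f_\alpha = [a_\alpha, b_\alpha]$ with $\supp(a_\alpha), \supp(b_\alpha) \subset B''_\alpha$. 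Setting $a = \prod_\alpha a_\alpha$ and $b = \prod_\alpha b_\alpha$ (again commuting products over disjoint supports), we get $f = [a,b]$ with $\supp(a), \supp(b) \subset \bigcup_\alpha B''_\alpha$. Now apply the disjoint restriction lemma: there is $w_a \in W^2$ with $\supp(w_a) \subset \bigcup_\alpha B'_\alpha$ restricting to $a$ on $\bigcup_\alpha B''_\alpha$, and $w_b \in W^2$ with $\supp(w_b) \subset \bigcup_\alpha B_\alpha$ restricting to $b$ on $\bigcup_\alpha B'_\alpha$. Since $\supp(w_a) \cap \supp(w_b) \subset \bigcup_\alpha B''_\alpha$ and on that set $w_a, w_b$ agree with $a, b$, the standard support-disjointness identity for commutators gives $[a,b] = [w_a, w_b]$, hence $f = [w_a, w_b] \in W^8 \subset \phi^{-1}(V_0^8) \subset \phi^{-1}(V)$.

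The main obstacle — really the only subtle point — is making sure the commutator identity $[a,b] = [w_a, w_b]$ goes through in the disjoint-union setting. One needs $w_a$ and $w_b$ to agree with $a$ and $b$ respectively on a common set containing $\supp(a) \cup \supp(b)$, and one needs the ``overflow'' of $w_a$ outside $\supp(a)$ to be disjoint from the overflow of $w_b$ outside $\supp(b)$ after conjugation — this is why the nesting $B''_\alpha \subset B'_\alpha$ with $\supp(w_a) \subset \bigcup B'_\alpha$ and $\supp(w_b) \subset \bigcup B_\alpha$ is arranged so that $\supp(w_a) \cap \supp(w_b) \subset \bigcup_\alpha (B'_\alpha \cap B_\alpha) = \bigcup_\alpha B'_\alpha$, and then a second level of nesting localizes the relevant intersection inside $\bigcup B''_\alpha$ where everything is controlled. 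This is exactly the bookkeeping already done in Lemma~\ref{local lemma}; it is purely formal here because distinct balls $B_\alpha$ are disjoint, so there is no interaction between different indices $\alpha$ and the estimates decouple. The one genuine idea that must not be lost is that the disjoint-union restriction lemma is proved by a \emph{single} Baire-category argument (building one homeomorphism $F$ out of all the counterexamples), keeping the word-length cost independent of the number of balls; proving it ball-by-ball and multiplying would blow up the word length and defeat the purpose of this lemma, which is precisely to feed a bounded word length into the fragmentation argument of Step~3.
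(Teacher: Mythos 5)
Your proposal is correct and follows essentially the same route as the paper: a single simultaneous Baire-category/restriction argument over the disjoint union $\sqcup_\alpha B_\alpha$ (building one homeomorphism $F$ from all counterexamples so the cost stays at $W^2$ independent of the number of balls), applied twice to get the nesting $B''_\alpha \subset B'_\alpha \subset B_\alpha$, followed by the same commutator trick via Proposition~\ref{perfectness prop} to land in $W^8$. You also correctly identify the one point that matters — that running the argument ball-by-ball would blow up the word length — which is exactly why the paper reproves the restriction lemma for disjoint unions rather than invoking it per ball.
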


The proof consists in running the arguments from Lemmas \ref{restriction lemma} to \ref{local lemma} on all the balls $B_\alpha$ simultaneously.  We state below the necessary modifications to do this.  

\begin{proof}
Let $W$ be as in Notation \ref{W notation} above.  
First, we modify Claim \ref{kiW claim} as follows.
\begin{claim} 
There exist balls $B'_\alpha \subset B_\alpha$, and a left translate $g_i W$ such that, if $\supp(f) \subset \sqcup_\alpha B'_\alpha$, then there exists $w_f \in g_i W$ with $\supp(w_f) \subset \sqcup_\alpha B_\alpha$ and such that the restriction of $w_f$ to $\sqcup_\alpha B'_\alpha$ agrees with $f$.
\end{claim}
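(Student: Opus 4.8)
The plan is to repeat the proof of Claim \ref{kiW claim} with the single ball $B$ replaced everywhere by the disjoint union $\sqcup_\alpha B_\alpha$, running the diagonal gluing argument on all the $B_\alpha$ simultaneously. As in the proof of Lemma \ref{restriction lemma}, fix a countable family $\{g_i\} \subset \Homeo_0(M)$ with $\Homeo_0(M) = \bigcup_i g_i W$. For each index $i$, I would choose inside each $B_\alpha$ an open ball $B_{\alpha,i}$ so that, for each fixed $\alpha$, the balls $B_{\alpha,1}, B_{\alpha,2}, \dots$ are pairwise disjoint, have pairwise disjoint closures, and satisfy $\overline{\bigcup_i B_{\alpha,i}} \subset B_\alpha$. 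Since the $B_\alpha$ are themselves disjoint, the doubly-indexed family $\{B_{\alpha,i}\}$ is pairwise disjoint, and, the sets $\overline{\bigcup_i B_{\alpha,i}}$ being disjoint compact subsets of the respective $B_\alpha$, we have $\overline{\bigcup_{\alpha,i} B_{\alpha,i}} \subset \sqcup_\alpha B_\alpha$.

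Now suppose, for contradiction, that for every $i$ there is $f_i \in \Homeo_0(M)$ with $\supp(f_i) \subset \sqcup_\alpha B_{\alpha,i}$ which does not agree on $\sqcup_\alpha B_{\alpha,i}$ with the restriction of any element of $g_i W$ supported in $\sqcup_\alpha B_\alpha$. The idea is then to glue these counterexamples into a single homeomorphism
$$F(x) = \left\{ \begin{array}{ll} f_i(x) & \text{if } x \in \sqcup_\alpha B_{\alpha,i} \text{ for some } i, \\ x & \text{otherwise.} \end{array} \right.$$
By the disjointness and the containment of closures arranged above, $F$ is a well-defined homeomorphism of $M$ with $\supp(F) \subset \sqcup_\alpha B_\alpha$, and, since its restriction to each ball $B_\alpha$ is compactly supported there (hence isotopic to the identity by the Alexander trick), $F \in \Homeo_0(M)$. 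Therefore $F \in g_j W$ for some $j$; but $F$ restricts to $f_j$ on $\sqcup_\alpha B_{\alpha,j}$ and is supported in $\sqcup_\alpha B_\alpha$, contradicting the defining property of $f_j$.

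Hence there is some index $i$ for which every $f \in \Homeo_0(M)$ with $\supp(f) \subset \sqcup_\alpha B_{\alpha,i}$ agrees on $\sqcup_\alpha B_{\alpha,i}$ with the restriction of an element $w_f \in g_i W$ having support in $\sqcup_\alpha B_\alpha$. Setting $B'_\alpha := B_{\alpha,i}$ and taking this translate $g_i W$ proves the claim. I expect the only real (and still quite mild) subtlety to be the bookkeeping of the first paragraph — choosing the auxiliary balls so that the glued homeomorphism $F$ does not spill outside $\sqcup_\alpha B_\alpha$ — after which the diagonal argument is formally identical to the one-ball case. With this claim in hand, the remaining steps of Lemmas \ref{restriction lemma} and \ref{local lemma} carry over to $\sqcup_\alpha B_\alpha$ verbatim, applying the uniform perfectness of Proposition \ref{perfectness prop} separately on each of the finitely many balls $B''_\alpha$, which completes the proof of Lemma \ref{multi lemma}.
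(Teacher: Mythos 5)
Your proof is correct and follows essentially the same route as the paper: the paper likewise proves the claim by choosing, for each $\alpha$, a sequence of disjoint balls $B_{\alpha,i}\subset B_\alpha$ and running the same diagonal gluing argument on the ``infinite composition'' $F$ of the putative counterexamples $f_i$. Your extra bookkeeping about closures and about $F$ lying in $\Homeo_0(M)$ just fills in details the paper leaves implicit.
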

To prove this claim, imitate the proof of Claim \ref{kiW claim} by taking a sequence of disjoint balls $B_{\alpha, i} \subset B_\alpha$ for each $\alpha$, and supposing for contradiction that there existed $f_i$ supported on $\sqcup_\alpha B_{\alpha, i}$ but not in $g_iW$.  Taking the ``infinite composition" $F$ of the $f_i$ as before gives the desired contradiction.  
Composing with the identity as in Lemma \ref{restriction lemma} now shows that any $f \in \Homeo_0(M)$ with support in $\sqcup_\alpha B'_\alpha$ actually lies in $W^2$.  

Finally, as in the original proof of Lemma \ref{local lemma}, we can apply this construction twice to find balls $B''_\alpha \subset B'_\alpha \subset B_\alpha$ such that if $\supp(a) \subset \sqcup_\alpha B''_\alpha$, then $f$ agrees with the restriction of an element of $W^2$ supported on $\sqcup_\alpha B'_\alpha$, and if $\supp(b) \subset \sqcup_\alpha B'_\alpha$, then $b$ agrees with an element of $W^2$ supported on $\sqcup_\alpha B_\alpha$.  The same commutator trick using Proposition \ref{perfectness prop} now applies to show that any element $f \in \Homeo_0(M)$ with $\supp(f) \subset \sqcup_\alpha B''_\alpha$ lies in $W^8$.

\end{proof}

%-------------------
\subsection*{Step 3: Local to global}
To finish the proof, we will improve the local result of Lemma \ref{multi lemma} to a global result by using fragmentation with respect to an \emph{efficient cover}, in the sense described below.  We assume that $M$ has been given a metric $d_M$.  

\begin{lemma}[Existence of an efficient cover] \label{cover lemma}
Let $M$ be a compact manifold.  There exists $m \in \N$ (depending only on $M$) such that, for all $\epsilon$ sufficiently small, there is a cover $\{E_1, E_2, ... E_m\}$ of $M$ where each set $E_i$ consists of a finite union of disjoint balls of radius $\epsilon$.  
\end{lemma}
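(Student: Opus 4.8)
The plan is to produce the cover by a standard coloring argument applied to a maximal $\epsilon$-separated set in $M$. First I would fix, using compactness of $M$, a finite atlas and a Lebesgue number, so that for all sufficiently small $\epsilon$ every metric ball of radius $4\epsilon$ (say) lies in a single chart and is genuinely an embedded ball; this is what lets us later talk about ``balls of radius $\epsilon$'' as honest embedded balls rather than pathological sets. Next, take a maximal set $S \subset M$ of points that are pairwise at distance $\geq 2\epsilon$; maximality gives that the balls $\{B(x,2\epsilon) : x \in S\}$ cover $M$, while the balls $\{B(x,\epsilon): x \in S\}$ are pairwise disjoint. The idea is then to partition $S$ into $m$ classes $S_1, \dots, S_m$ such that within each class the chosen balls are pairwise disjoint, and set $E_i = \bigcup_{x \in S_i} B(x, 2\epsilon)$ (enlarging the radius from $\epsilon$ back to $2\epsilon$, or rescaling $\epsilon$ at the outset, to match the statement's wording).

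The key point is that $m$ can be chosen independent of $\epsilon$. Consider the ``intersection graph'' on vertex set $S$ where $x \sim y$ iff $B(x,2\epsilon) \cap B(y,2\epsilon) \neq \emptyset$, i.e. $d(x,y) < 4\epsilon$. I claim this graph has bounded degree, with a bound depending only on $M$: if $x_1, \dots, x_k$ are all within $4\epsilon$ of $x$ and pairwise $\geq 2\epsilon$ apart, then the disjoint balls $B(x_j, \epsilon)$ all lie in $B(x, 5\epsilon)$, and a volume (or packing) comparison in the chart around $x$ — using that the metric is bilipschitz to the Euclidean one on that chart, uniformly over $M$ by compactness — bounds $k$ by a constant $D = D(M)$. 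A graph of maximum degree $D$ is $(D+1)$-colorable by the greedy algorithm, so we may take $m = D+1$ and let $S_1, \dots, S_m$ be the color classes. Within a color class no two of the enlarged balls meet, so each $E_i$ is a disjoint union of balls (of the appropriate radius), and since $\bigcup_i E_i \supseteq \bigcup_{x\in S} B(x,2\epsilon) = M$, the collection $\{E_1,\dots,E_m\}$ is a cover.

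The main obstacle — really the only non-formal point — is making the bounded-degree claim uniform in $\epsilon$ and in the basepoint, i.e. getting a single constant $D$ that works for all small $\epsilon$ and all $x \in M$. This is where compactness of $M$ enters essentially: I would cover $M$ by finitely many charts in which the Riemannian (or just the fixed compatible) metric is comparable to the Euclidean metric with uniform constants, note that for $\epsilon$ small enough every relevant configuration $B(x,5\epsilon)$ sits inside one such chart, and then the packing bound is just the elementary Euclidean fact that one cannot fit more than a bounded number of disjoint $\epsilon$-balls into a $5\epsilon$-ball. Everything else — maximality of $S$, the greedy coloring, the bookkeeping between radius $\epsilon$ and radius $2\epsilon$ (absorbed by replacing $\epsilon$ with $\epsilon/2$ at the start) — is routine. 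If $M$ has boundary one uses half-balls near $\partial M$ in the charts, with no change to the counting argument.
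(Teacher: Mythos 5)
Your argument is correct and is essentially the paper's proof: both take a maximal separated set (an $\epsilon$-net), bound the degree of the intersection graph of the resulting balls by a volume/packing comparison that is uniform in $\epsilon$ by compactness of $M$, and then greedily color the graph to split the cover into $m$ classes of pairwise disjoint balls. The extra care you take about small balls being genuinely embedded and the radius bookkeeping are fine and do not change the substance.
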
 

The critical point of this lemma is that the constant $m$ does not depend on $\epsilon$.  
We defer the proof of the lemma to the end of this section, showing first how to use the efficient cover to finish the proof of the main theorem.  

\begin{proof}[Proof of Theorem \ref{main cont thm}, given Lemma \ref{cover lemma}]
As before, let $\phi: \Homeo_0(M) \to H$ be a homomorphism, let $V \subset H$ be a neighborhood of the identity, and let  $m$ be the constant given by Lemma \ref{cover lemma}.  
Using our usual trick, we take a smaller symmetric neighborhood of the identity $V_0$ such that $V_0^{12m} \subset V$.  

Let $W = \phi^{-1}(V_0)$.  In order to show that $\phi$ satisfies condition \ref{main condition}, it suffices to find a neighborhood $U$ of the  identity in $\Homeo_0(M)$ such that 
$$U \subset W^{12m} = \phi^{-1}(V).$$

By Lemma \ref{Baire lemma}, there exists a neighborhood of the identity in $\Homeo_0(M)$ contained in the closure of $W^2$.  Let $\epsilon$ be small enough so that this neighborhood contains the set 
$$\{ f \in \Homeo_0(M) \mid d_M(f(x), x) < 2 \epsilon \text{ for all } x \in M \}.$$
In particular, any homeomorphism supported on a ball of radius $\epsilon$ in $M$ is contained in this neighborhood, hence in the closure of $W^2$.   Now using this $\epsilon$, build an efficient cover $\{E_1, E_2, ... E_m\}$ as in Lemma \ref{cover lemma}, and let $\{ B^i_\alpha \}$ denote the set of disjoint balls of radius $\epsilon$ comprising $E_i$.   

By Lemma \ref{multi lemma}, there exist balls $(B^i_\alpha)'' \subset B^i_\alpha$ such that any $f$ with $\supp(f) \subset \sqcup_\alpha (B^i_\alpha)''$ satisfies $f \in W^8$.  
Let $h_i \in \Homeo_0(M)$ be supported on a small neighborhood of $\sqcup_\alpha B^i_\alpha$, and such that  the closure of $h_i(B^i_\alpha)$ is contained in $(B^i_\alpha)''$. Since the balls $B^i_\alpha$ have radius $\epsilon$, there exists such a homeomorphism $h_i$ with $\sup_{x \in M} d_M (h_i(x), x) < 2 \epsilon$.  Thus, $h_i$ can be approximated by some element $w_i \in W^2$.  In particular this implies that there exists $w_i \in W^2$ such that $w_i(\sqcup_\alpha B^i_\alpha) \subset (\sqcup_\alpha B^i_\alpha)''$.  

We can now finally show that there is a neighborhood of the identity in $\Homeo_0(M)$ contained in $W^{12m}$.  
Since $\Homeo_0(M)$ has the local fragmentation property (Proposition \ref{fragmentation prop}), there is a neighborhood $U$ of the identity in $\Homeo_0(M)$ such that for all $f \in U$, we can write 
$$f = f_1  f_2  ... f_m$$ 
with $\supp(f_i) \subset E_i$.  Then $\supp(w_i^{-1} f_i w_i) \subset \sqcup_\alpha (B^i_\alpha)''$, so $w_i^{-1} f_i w_i \in W^8$ which implies that $f_i \in W^{12}$.  It follows that $f \in W^{12m}$, as desired.    This completes the proof of the theorem.  

\end{proof}

It remains only to prove Lemma \ref{cover lemma}.   Our construction uses \emph{$\epsilon$-nets}, so we begin by recalling the definition of an $\epsilon$-net.  Here, and in the proof of the Lemma, we use the notation $B(r, x)$ for the ball of radius $r$ about a point $x$.  

\begin{definition}
Let $M$ be a manifold with metric.  An \emph{$\epsilon$-net} is a finite set of points $\{x_1, ... x_k\}$ in $M$ satisfying 
\begin{enumerate}[i)]
\item $\bigcup \limits_{i=1}^n = B(\epsilon, x_i) = M$, and
\item $B(\epsilon/2, x_i) \cap B(\epsilon/2, x_j) = \emptyset$ for $i \neq j$.
\end{enumerate}
Similarly, we define an \emph{$\epsilon$-net cover} of $M$ is a set of balls $\{B(\epsilon, x_i)\}$ such that the union of the centers $\{x_i\}$ forms an $\epsilon$-net.  
\end{definition}  

The content of the proof is to show that, for any $\epsilon$ small enough, an $\epsilon$-net cover will satisfy the requirements of the Lemma.  

\begin{proof}[Proof of Lemma \ref{cover lemma}]
Let $M$ be a compact manifold.  To simplify the proof, we assume that $M$ is equipped with a Riemannian metric, although it is possible to give a purely combinatorial argument using an arbitrary compatible metric on $M$.     

Recall that, for any cover $\{A_i\}$, the \emph{dual graph} of the cover is the graph with vertex set $\{A_i\}$ and an edge between $A_i$ and $A_j$ whenever $A_i \cap A_j \neq \emptyset$.    
We first prove a regularity result on the dual graphs of $\epsilon$-net covers.  

\begin{claim}
There exists $\lambda > 0$ and $m = m(M) \in \N$ such that, for every $\epsilon < \lambda$, the dual graph of any $\epsilon$-net cover of $M$ has degree less than $m$. 
\end{claim}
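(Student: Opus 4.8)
The plan is to bound the degree of the dual graph of an $\epsilon$-net cover using a volume-comparison argument, exploiting the disjointness of the half-radius balls $B(\epsilon/2, x_i)$ built into the definition of an $\epsilon$-net. First I would fix $\lambda$ small enough that every metric ball $B(\rho, x)$ of radius $\rho < \lambda$ in $M$ is geodesically convex and contained in a single coordinate chart, so that Riemannian volume is comparable, up to a fixed multiplicative constant depending only on $M$, to Euclidean volume; in particular there are constants $0 < c \leq C$ (depending only on $M$ and $\lambda$) with $c\,\rho^n \leq \vol(B(\rho, x)) \leq C\,\rho^n$ for all $x \in M$ and all $\rho < \lambda$.

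Next I would estimate the degree of a vertex $B(\epsilon, x_i)$ in the dual graph. If $B(\epsilon, x_j)$ is adjacent to $B(\epsilon, x_i)$, then $B(\epsilon, x_i) \cap B(\epsilon, x_j) \neq \emptyset$, so $d_M(x_i, x_j) < 2\epsilon$, and hence the half-ball $B(\epsilon/2, x_j)$ is contained in $B(5\epsilon/2, x_i)$. Since the balls $\{B(\epsilon/2, x_j)\}$ are pairwise disjoint by property (ii) of an $\epsilon$-net, if $x_i$ has $N$ neighbors (including itself) then $N$ disjoint balls of radius $\epsilon/2$ fit inside $B(5\epsilon/2, x_i)$. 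Comparing volumes gives
$$N \cdot c \left(\frac{\epsilon}{2}\right)^n \leq \vol\!\left(B\!\left(\tfrac{5\epsilon}{2}, x_i\right)\right) \leq C \left(\frac{5\epsilon}{2}\right)^n,$$
so $N \leq (C/c)\cdot 5^n$, a bound independent of $\epsilon$ and of $x_i$. Setting $m = m(M)$ to be any integer strictly larger than this quantity proves the claim.

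The main obstacle is making the volume comparison uniform over all of $M$, i.e.\ producing the constants $c, C$ that work simultaneously at every basepoint and every radius below $\lambda$; this is where compactness of $M$ is essential, since it gives a uniform lower bound on the injectivity radius and uniform two-sided bounds on the metric coefficients across a finite atlas. Once that is in hand, the rest is the elementary packing estimate above. I should also note that this argument only bounds the degree; translating a degree bound into the constant $m$ of Lemma \ref{cover lemma} (the number of color classes $E_i$) requires a graph-coloring step — a graph of maximum degree $d$ is $(d+1)$-colorable, and each color class is automatically a union of pairwise-disjoint balls since same-colored balls are non-adjacent hence disjoint — but that step presumably comes after this claim in the author's proof.
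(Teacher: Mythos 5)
Your proposal is correct and follows essentially the same route as the paper: a packing argument that bounds the degree by comparing the volume of a slightly enlarged ball around $x_i$ with the volumes of the pairwise disjoint half-radius balls $B(\epsilon/2, x_j)$, using compactness of $M$ to make the volume comparison uniform in the basepoint and in $\epsilon < \lambda$. If anything your containment $B(\epsilon/2, x_j) \subset B(5\epsilon/2, x_i)$ is stated a bit more carefully than the paper's, which only records that the centers $x_j$ lie in $B(2\epsilon, x_i)$; and you are right that the greedy-coloring step belongs to the remainder of the lemma, not to this claim.
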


\begin{proof}
Take $\epsilon > 0$, and let $\{B(\epsilon, x_i)\}$ be an $\epsilon$-net cover.  
If there is an edge between $\{B(\epsilon, x_i)\}$ and $\{B(\epsilon, x_j)\}$ in the dual graph, then $x_j \in B(2\epsilon, x_i)$.  Since $B(\epsilon/2, x_i) \cap B(\epsilon/2, x_j) = \emptyset$, the degree of the vertex $\{B(\epsilon, x_i)\}$ is bounded above by 
$$\inf \left\{ \frac{\vol(B(2\epsilon, x_i))}{\vol(B(\epsilon/2, x))} \mid x \in B(2\epsilon, x_i)  \right\} $$
Since $M$ is compact, the limit of this ratio as $\epsilon \to 0$ is bounded, and can be taken independent of the point $x_i$. Let $m$ be any integer so that $m-1$ is strictly larger than this bound.  
It follows that, if $\lambda > 0$ is sufficiently small, then for any $\epsilon < \lambda$, the degree of a vertex in the dual graph to any $\epsilon$-net cover will be bounded by $m-1$.  
\end{proof} 

To finish the proof of the lemma, note that any graph of degree less than $m$ admits a proper coloring with $m$ colors.  (In fact, this bound is given by the greedy coloring).  As a consequence, if $m$ and $\lambda$ are the constants from the claim, then for all $\epsilon < \lambda$, any $\epsilon$-net cover of $M$ can be partitioned into $m$ subsets $E_1, E_2, ... E_m$, each consisting of a disjoint union of balls of radius $\epsilon$.

\end{proof}

\begin{remark}
With a little more work (e.g. covering first a neighborhood of the 1-skeleton of a triangulation, then 2-cells, etc.) it should be possible to produce a constant $m$ that depends only on the dimension of $M$.  But we do not need this stronger fact.
\end{remark}

%---------------------------------------------------------------------------------
\section{A broader picture}

\subsection{Automatic continuity in the relative and boundary case} \label{modification sec}

To prove automatic continuity for $\Homeo_0(M)$ when $\del M \neq \emptyset$, or for $\Homeo_0(M \rel N)$ when $N$ is an embedded submanifold of dimension at least 1, one needs essentially no new ingredients besides the relative versions of perfectness and fragmentation stated in Section \ref{background sec}.   Step 1 of the proof carries through verbatim, we list here the necessary modifications in Step 2 and 3.  

\bigskip
\noindent \textbf{Step 2: the ``local" version for half- or relative-balls. }
Recall that a \emph{half ball} in $M$ is a proper embedding of $\{(x_1, ... x_n) \in \B^n \mid x_n \geq 0\}$.  Lemma \ref{local lemma} has a straightforward reformulation for half-balls.  

\begin{lemma}
Let $B \subset M$ be an embedded half-ball.  Then there exists a half-ball $B'' \subset B$ such that, if $f \in G$ has $\supp(f) \subset B''$, then $f \in W^8$.  
\end{lemma}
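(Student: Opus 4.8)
The plan is to mimic the closed-manifold argument of Lemmas~\ref{restriction lemma}, \ref{local lemma} almost verbatim, replacing ``embedded ball'' by ``embedded half-ball'' throughout and substituting the relative perfectness statement (Proposition~\ref{relative perfectness prop}(i)) for Proposition~\ref{perfectness prop} at the one place it is used. Concretely, I would first state and prove a half-ball analogue of Lemma~\ref{restriction lemma}: given an embedded half-ball $B \subset M$, there is a half-ball $B' \subset B$ such that every $f \in \Homeo_0(M)$ with $\supp(f) \subset B'$ agrees on $B'$ with the restriction of some $w_f \in W^2$ having $\supp(w_f) \subset B$. The proof of this is identical to the closed case: the Baire argument of Lemma~\ref{Baire lemma} still produces a countable family $\{g_i\}$ with $\Homeo_0(M) = \bigcup_i g_i W$ (nothing there used $\partial M = \emptyset$); then one chooses a sequence of disjoint half-balls $B_i \subset B$ with disjoint closures, accumulating inside $B$, and runs the same ``infinite composition of counterexamples'' $F$ to get a contradiction. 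Taking the quotient $w_2^{-1} w_1$ of two elements of $g_i W$ (one realizing $f$, one realizing $\id$) lands us in $W^2$ with support in $B$, exactly as before.

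Next I would upgrade this to the half-ball version of Lemma~\ref{local lemma} by the same commutator trick, applied twice. Apply the half-ball restriction lemma to $B$ to get $B' \subset B$, then to $B'$ to get $B'' \subset B' \subset B$. Given $f$ with $\supp(f) \subset B''$, use Proposition~\ref{relative perfectness prop}(i) --- valid since $B''$ is a half-ball intersecting $\partial M$ in a manifold of dimension $\dim M \ge 2$ --- to write $f = [a,b]$ with $\supp(a), \supp(b) \subset B''$. Then produce $w_a \in W^2$ with $\supp(w_a) \subset B'$ restricting to $a$ on $B''$, and $w_b \in W^2$ with $\supp(w_b) \subset B$ restricting to $b$ on $B'$. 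Because $\supp(w_a) \subset B'$ and $\supp(w_b)$ agrees with $b$ on the neighborhood $B' \supset B''$ of $\supp(a)$, the commutators coincide: $[a,b] = [w_a, w_b]$, hence $f \in W^8$. This is the content of the stated lemma.

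I do not expect any genuine obstacle here --- the point of isolating the relative perfectness statement in Section~\ref{background sec} was precisely to make this step routine. The one mild subtlety worth a remark is the dimension hypothesis $\dim M \ge 2$: in dimension $1$ a half-ball is a half-open interval and Proposition~\ref{relative perfectness prop}(i) is not available (indeed $\Homeo_0([0,1))$ supported near the endpoint is not perfect in the required uniform sense), so the argument, and the lemma as stated, genuinely needs $\dim M \ge 2$. Everything else --- the Baire covering, the infinite-composition trick, the bookkeeping of which $W^k$ one lands in --- transfers with no change, since none of it ever used that $M$ was boundaryless. After this, Step~3 proceeds as in the closed case once one also has the relative version of the efficient cover (a cover by half-balls near $\partial M$ and ordinary balls in the interior, with a uniform bound on the chromatic number of the dual graph), which follows from the same volume-counting argument.
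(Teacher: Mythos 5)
Your proposal is correct and is exactly the paper's argument: the paper's proof consists of the single observation that one replaces ``ball'' by ``half-ball'' throughout Lemmas \ref{restriction lemma}--\ref{local lemma} and substitutes the half-ball case of Proposition \ref{relative perfectness prop} for Proposition \ref{perfectness prop}. Your added remark about why $\dim M \ge 2$ is needed is a correct and worthwhile clarification, but not a departure from the paper's route.
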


The proof is identical to the proof for balls in the interior of $M$, one simply replaces ``ball" with ``half-ball" everywhere, starting in Lemma \ref{restriction lemma}, and uses the version of Proposition \ref{perfectness prop} for half-balls.   This gives a version of Lemma \ref{multi lemma} for manifolds with boundary.  

\begin{lemma}  \label{multi lemma 2}
Let $\{ B_\alpha \}$ be a finite collection of disjoint open balls or half-balls in $M$.  Then there exist open balls or, respectively, half-balls $B''_\alpha \subset B_\alpha$ such that if $f \in \Homeo_0(M)$ has $\supp(f) \subset B''_\alpha$, then $f \in W^8$.   
\end{lemma}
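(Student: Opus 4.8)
The plan is to run the arguments of Lemmas \ref{restriction lemma}, \ref{local lemma} and \ref{multi lemma} on the whole collection $\{B_\alpha\}$ simultaneously, treating each $B_\alpha$ according to its type: when $B_\alpha$ is an interior ball we invoke Proposition \ref{perfectness prop}, and when $B_\alpha$ is a half-ball we invoke the half-ball case of Proposition \ref{relative perfectness prop}. Throughout, $W = \phi^{-1}(V_0)$ is as in Notation \ref{W notation}, with $V_0$ symmetric and $V_0^8 \subset V$, and we recall from the proof of Lemma \ref{Baire lemma} that there is a countable set $\{g_i\}$ with $\Homeo_0(M) = \bigcup_i g_i W$.

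First I would establish the joint restriction statement, imitating Claim \ref{kiW claim}. For each $\alpha$ choose a sequence of pairwise disjoint sets $B_{\alpha,i} \subset B_\alpha$ of the same type as $B_\alpha$ (balls, or proper half-balls meeting $\partial M$ inside $B_\alpha \cap \partial M$), with disjoint closures accumulating only in the interior of $B_\alpha$. If no translate $g_i W$ had the desired property, one produces counterexamples $f_i$ supported on $\sqcup_\alpha B_{\alpha,i}$ not agreeing on $\sqcup_\alpha B_{\alpha,i}$ with the restriction of any element of $g_i W$ supported on $\sqcup_\alpha B_\alpha$; the ``infinite composition'' $F$ equal to $f_i$ on $\sqcup_\alpha B_{\alpha,i}$ and the identity elsewhere lies in some $g_i W$ and restricts to $f_i$, a contradiction. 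Composing with an element of $g_i W$ that restricts to the identity on $\sqcup_\alpha B'_\alpha$, exactly as at the end of the proof of Lemma \ref{restriction lemma}, upgrades the conclusion: there are sets $B'_\alpha \subset B_\alpha$ (of the same type) so that every $f$ with $\supp(f) \subset \sqcup_\alpha B'_\alpha$ agrees on $\sqcup_\alpha B'_\alpha$ with some $w_f \in W^2$ having $\supp(w_f) \subset \sqcup_\alpha B_\alpha$.

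Next I would iterate once more to obtain $B''_\alpha \subset B'_\alpha \subset B_\alpha$ so that any $f$ supported on $\sqcup_\alpha B''_\alpha$ agrees on $\sqcup_\alpha B''_\alpha$ with an element of $W^2$ supported on $\sqcup_\alpha B'_\alpha$, and any $f$ supported on $\sqcup_\alpha B'_\alpha$ agrees on $\sqcup_\alpha B'_\alpha$ with an element of $W^2$ supported on $\sqcup_\alpha B_\alpha$. Given $f$ with $\supp(f) \subset \sqcup_\alpha B''_\alpha$, apply Proposition \ref{perfectness prop} on each interior ball $B''_\alpha$ and the half-ball case of Proposition \ref{relative perfectness prop} on each half-ball $B''_\alpha$, and assemble the resulting factors over the disjoint pieces, to write $f = [a,b]$ with $\supp(a), \supp(b) \subset \sqcup_\alpha B''_\alpha$. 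Choose $w_a \in W^2$ supported on $\sqcup_\alpha B'_\alpha$ agreeing with $a$ on $\sqcup_\alpha B''_\alpha$, and $w_b \in W^2$ supported on $\sqcup_\alpha B_\alpha$ agreeing with $b$ on $\sqcup_\alpha B'_\alpha$. Since $\supp(w_a) \cap \supp(w_b) \subset \sqcup_\alpha B''_\alpha$, we have $[a,b] = [w_a, w_b]$, hence $f \in W^8$, as claimed.

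I expect the only genuine subtlety — and the one point I would check carefully — is the use of the half-ball case of Proposition \ref{relative perfectness prop}, which requires $\dim M \geq 2$; in dimension one a component of $M$ meeting the boundary is an interval, where the half-ball uniform perfectness argument degenerates and must be handled separately. Everything else is the bookkeeping of carrying a mixed ball/half-ball collection through arguments already established in Step 2, together with the routine facts that homeomorphisms supported on different $B_\alpha$'s can be composed freely and that commutators of homeomorphisms with disjoint supports behave as in Anderson's argument.
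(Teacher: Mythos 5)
Your proposal is correct and follows essentially the same route as the paper, which simply runs the arguments of Lemmas \ref{restriction lemma}--\ref{multi lemma} verbatim with ``half-ball'' substituted for ``ball'' and Proposition \ref{relative perfectness prop} substituted for Proposition \ref{perfectness prop} where needed. Your caveat about the dimension hypothesis in the half-ball case of Proposition \ref{relative perfectness prop} is a fair point that the paper leaves implicit (the one-dimensional case is covered separately by Rosendal--Solecki).
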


Similarly, for the relative case we have 

\begin{lemma}  \label{multi lemma 3}
Let $M \subset N$ be an embedded submanifold, and $\{ B_\alpha \}$ a finite collection of disjoint open balls or relative-balls in $M$.  Then there exist open balls or, respectively, relative-balls $B''_\alpha \subset B_\alpha$ such that if $f \in \Homeo_0(M)$ has $\supp(f) \subset B''_\alpha$, then $f \in W^8$.   
\end{lemma}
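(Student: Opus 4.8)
The plan is to prove Lemma \ref{multi lemma 3} by rerunning the entire Step 2 argument — Lemmas \ref{restriction lemma}, \ref{local lemma}, and \ref{multi lemma} — verbatim, but with ``ball'' replaced everywhere by the appropriate notion: for balls $B_\alpha$ sitting in the interior of $M$ and disjoint from $N$, nothing changes; for relative balls $B_\alpha$ (those with $\psi(\B^n \cap \R^d) = N \cap \psi(\B^n)$), one uses the relative ball notion throughout, and shrinks within the class of relative balls. The only two places where the geometry of the ball is actually used are (i) the construction in Claim \ref{kiW claim} of an infinite sequence of disjoint balls $B_{\alpha,i}$ nested inside $B_\alpha$, and (ii) the appeal to uniform perfectness (Proposition \ref{perfectness prop}) in the commutator trick of Lemma \ref{local lemma}. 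Both go through: a relative ball clearly contains infinitely many disjoint smaller relative balls (shrink toward a point of $N\cap\psi(\B^n)$, or toward any interior point — either works), and uniform perfectness in the relative setting is exactly Proposition \ref{relative perfectness prop}(ii).

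Concretely, I would first state the relative analogue of Lemma \ref{restriction lemma}: given a relative ball $B$, there is a relative ball $B' \subset B$ such that every $f \in \Homeo_0(M \rel N)$ with $\supp(f) \subset B'$ agrees on $B'$ with the restriction of some $w_f \in W^2$ having $\supp(w_f) \subset B$. The proof is the same Baire-category argument: the translates $g_i W$ cover $\Homeo_0(M \rel N)$ (this group is Polish, being $C^0$-closed in $\Homeo(M)$), and the ``infinite composition'' $F$ of counterexamples $f_i$ supported on disjoint relative balls $B_{\alpha,i}$ is itself an element of $\Homeo_0(M \rel N)$ — crucially, $F$ preserves $N$ because each $f_i$ does and the $f_i$ have disjoint supports contained in relative balls. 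Then I would state the relative analogue of Lemma \ref{local lemma}: apply the relative restriction lemma twice to get $B'' \subset B' \subset B$, write $f = [a,b]$ with $a,b \in \Homeo_0(M \rel N)$ supported in $B''$ using Proposition \ref{relative perfectness prop}(ii), lift $a$ to $w_a \in W^2$ supported in $B'$ and $b$ to $w_b \in W^2$ supported in $B$, observe $[a,b] = [w_a,w_b]$ since $\supp(w_a) \cap \supp(w_b) \subset B''$, and conclude $f \in W^8$.

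Finally, as in the proof of Lemma \ref{multi lemma}, I would run all of this simultaneously over the finite disjoint collection $\{B_\alpha\}$ — some ordinary balls, some relative balls — taking for each $\alpha$ a sequence of disjoint nested balls (or relative balls) $B_{\alpha,i}$, forming the infinite composition over the pair $(\alpha,i)$, and applying the modified Claim \ref{kiW claim} to extract a single translate $g_iW$ working for all $\alpha$ at once. This yields balls $B''_\alpha \subset B_\alpha$ of the appropriate type such that any $f \in \Homeo_0(M \rel N)$ with $\supp(f) \subset \sqcup_\alpha B''_\alpha$ lies in $W^8$.

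I do not expect any genuine obstacle here; the point of the lemma is precisely that the Step 2 machinery is robust enough to absorb the boundary and submanifold cases with only cosmetic changes, once Proposition \ref{relative perfectness prop} is in hand. The one place to be slightly careful — and the only thing I would spell out rather than wave at — is verifying that the infinite-composition homeomorphism $F$ genuinely lies in $\Homeo_0(M \rel N)$: it preserves $N$ since each building block does, and it lies in the identity component because it is a (locally finite, hence well-defined continuous) product of homeomorphisms each isotopic to the identity within $\Homeo_0(M \rel N)$ through isotopies supported in the respective relative balls. Everything else is a transcription of the arguments already given for the closed case.
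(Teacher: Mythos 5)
Your proposal is correct and follows exactly the route the paper takes: the paper's own treatment of Lemma \ref{multi lemma 3} consists precisely of the observation that the arguments of Lemmas \ref{restriction lemma}--\ref{multi lemma} go through verbatim with ``ball'' replaced by ``relative ball,'' using Proposition \ref{relative perfectness prop}(ii) in place of Proposition \ref{perfectness prop}. Your extra care in checking that the infinite-composition homeomorphism $F$ lies in $\Homeo_0(M \rel N)$ is a worthwhile detail the paper leaves implicit, but it is not a departure from the paper's argument.
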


\noindent \textbf{Step 3: efficient covers}.  
Given lemma \ref{multi lemma 3}, the only missing ingredient to run the argument of Step 3 for manifolds with boundary or relative homeomorphism groups is Lemma \ref{cover lemma} on efficient covers.   In fact, the same proof works for this case: if $N \subset M$ is an embedded submanifold, then, \emph{provided $\epsilon$ is chosen sufficiently small}, each ball in an $\epsilon$-net cover that intersects $N$ will actually be an embedded relative ball, so the argument on existence of efficient covers runs verbatim, just replacing ``ball" by ``ball or relative ball".  Similarly, if $\del M \neq \emptyset$, any sufficiently small metric ball that intersects $\del M$ will be an embedded half-ball.

%---------------------
\subsection{Noncompact manifolds}

In \cite{Hurtado}, Hurtado defines a notion of \emph{weak continuity} for homomorphisms  -- a homomorphism $\phi$ from $\Homeo_0(M)$ to another group is \emph{weakly continuous} if, for every compact set $K \subset M$, the restriction of $\phi$ to the subgroup of homeomorphisms with support contained in $K$ is continuous.   
Our proof of Theorem \ref{main steinhaus thm} also shows the following. 

\begin{corollary} 
Let $M$ be any manifold, and $\phi: \Homeo_0(M) \to H$ a homomorphism to a separable topological group,.  Then $\phi$ is weakly continuous.
\end{corollary}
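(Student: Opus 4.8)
The plan is to exploit the fact that the proof of Theorem~\ref{main cont thm} is essentially \emph{local}: Steps~1 and~2 never use compactness of $M$, and only Step~3 -- the passage from a local statement to a global one via a finite efficient cover -- needs to be confined to a compact set.

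First I would check that Lemma~\ref{Baire lemma} and all the lemmas of Step~2 (Lemmas~\ref{restriction lemma}, \ref{local lemma}, \ref{multi lemma}) remain valid for an arbitrary manifold $M$, with the $C^0$ (compact--open) topology on $\Homeo_0(M)$. For Lemma~\ref{Baire lemma} the only input is that $\Homeo_0(M)$ is a separable Baire topological group, which is standard for homeomorphism groups of manifolds. The arguments of Step~2 use, beyond Lemma~\ref{Baire lemma}, only the uniform perfectness of homeomorphisms supported in an embedded ball (Proposition~\ref{perfectness prop}), whose proof -- Anderson's trick -- is purely local; and the ``infinite composition'' homeomorphisms appearing in those proofs are still well defined provided the auxiliary balls are chosen with diameters tending to $0$. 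Thus Lemma~\ref{multi lemma} holds verbatim for arbitrary $M$: given finitely many disjoint balls $B_\alpha \subset M$, there are sub-balls $B''_\alpha \subset B_\alpha$ with $\supp(f) \subset \sqcup_\alpha B''_\alpha \Rightarrow f \in W^8 \subset \phi^{-1}(V)$.

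Now fix a compact $K \subset M$; it suffices to produce, for each neighborhood $V$ of the identity in $H$, a neighborhood $\mathcal U$ of the identity in $\Homeo_0(M)$ with $\{\, f \in \mathcal U : \supp(f) \subset K \,\} \subset \phi^{-1}(V)$. Choose a compact manifold with boundary $L \subset M$ with $K \subset \mathrm{int}(L)$, and apply Lemma~\ref{cover lemma} to $L$ (the proof works equally for manifolds with boundary, cf.\ Section~\ref{modification sec}) to get an integer $m = m(L)$ and, for all small $\epsilon$, an efficient cover $E_1, \dots, E_m$ of $L$ by disjoint unions of $\epsilon$-balls, which we may take to lie in $\mathrm{int}(L)$. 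The one new ingredient is fragmentation for homeomorphisms supported in $K$: there is a neighborhood $\mathcal U$ of the identity in $\Homeo_0(M)$ such that every $f \in \mathcal U$ with $\supp(f) \subset K$ factors as $f = f_1 \cdots f_m$ with $\supp(f_i) \subset E_i$. This again follows from Edwards--Kirby, whose deformation theorem is local: for $f$ sufficiently $C^0$-close to the identity, $f|_L$ lies in $\Homeo_0(L)$ (open in $\Homeo(L)$ since $\Homeo(L)$ is locally connected) and fragments relative to $\{E_i\}$ with fragments supported in $\mathrm{int}(L)$, hence extending by the identity to homeomorphisms of $M$. With this, the endgame of Theorem~\ref{main cont thm} runs unchanged: choose symmetric $V_0$ with $V_0^{12m} \subset V$ and $W = \phi^{-1}(V_0)$; pick $\epsilon$ small enough that every homeomorphism supported on an $\epsilon$-ball of $M$ -- such a map displaces each point by at most $2\epsilon$ -- lies in the closure of $W^2$; use Lemma~\ref{multi lemma} to get sub-balls $(B^i_\alpha)''$ with support-there $\Rightarrow W^8$, and $w_i \in W^2$ conjugating $\sqcup_\alpha B^i_\alpha$ into $\sqcup_\alpha (B^i_\alpha)''$. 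Then for $f \in \mathcal U$ with $\supp(f) \subset K$, writing $f = f_1 \cdots f_m$ gives $w_i^{-1} f_i w_i \in W^8$, so $f_i \in W^{12}$ and $f \in W^{12m} \subset \phi^{-1}(V)$.

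I expect the main obstacle to be precisely the fragmentation step. In the compact case fragmentation is quoted off the shelf (Proposition~\ref{fragmentation prop}), but here one must fragment a homeomorphism supported in a fixed compact set using a \emph{finite} cover while keeping every fragment supported in a prescribed compact region; and one must be careful that the ``small support'' neighborhoods used along the way are genuinely open in the compact--open topology rather than the strictly finer uniform topology. Both issues are resolved by restricting to the compact submanifold $L$ and invoking the local form of the Edwards--Kirby theorem, but this is where the argument needs attention; everything else is a transcription of the proof of Theorem~\ref{main cont thm}.
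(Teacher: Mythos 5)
Your proposal is correct and is essentially the paper's own (implicit) argument: the paper offers no separate proof, merely asserting that the proof of Theorem \ref{main steinhaus thm} localizes to homeomorphisms supported in a compact set, and your write-up carries out exactly that localization, correctly identifying the two points needing care (fragmentation relative to a compact neighborhood of $K$, and openness in the compact--open rather than uniform topology). The only cosmetic simplification worth noting is that you need not engulf $K$ in a compact submanifold with boundary $L$ (which requires some machinery for topological manifolds in certain dimensions); the $\epsilon$-net/efficient-cover construction and the Edwards--Kirby deformation theorem both apply directly to a compact neighborhood of $K$ in $M$.
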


For general non-compact $M$, the compact-open topology on $\Homeo_0(M)$ is separable and completely metrizable.   Thus, it is reasonable to ask whether automatic continuity holds for such groups.  

\begin{question} 
Does $\Homeo_0(M)$, with the $C^0$ compact-open topology, have automatic continuity when $M$ is noncompact?  
\end{question}

\subsection{The \emph{Steinhaus} condition for Polish groups.}

In \cite{RS}, Rosendal and Solecki give a condition on a topological group that implies that the group has automatic continuity.  This condition is called \emph{Steinhaus}.

\begin{definition}
A topological group $G$ is \emph{Steinhaus} if there is some $n \in \N$ such that, whenever $W \subset G$ is a symmetric set such that countably many left-translates of $W$ cover $G$, there exists a neighborhood of the identity of $G$ contained in $W^n$.  
\end{definition}  
Note that in the definition of Steinhaus the exponent $n$ depends only on the group $G$, but the neighborhood of the identity in $G$ is allowed to depend on $W$.  
The proof that Steinhaus implies automatic continuity is a Baire category theorem argument as in Lemma \ref{Baire lemma} above.  

Our proof of automatic continuity for homeomorphism groups actually shows that $\Homeo_0(M)$ and $\Homeo_0(M \rel N)$ are Steinhaus -- the reader may check that, in each step where we referenced the set $W = \phi^{-1}(V_0)$, the only property we ever used of $W$ was that $\Homeo_0(M)$ was the union of countably many left-translates $g_i W$.  Our choice to retain the reference to $\phi^{-1}(V_0)$ was primarily for the purpose of making the proof more transparent.    In effect, what we actually proved was the following:

\begin{theorem} \label{main steinhaus thm}
Let $M$ be a compact manifold, possibly with boundary. Then $\Homeo_0(M)$ is Steinhaus. 
If $N \subset M$ is an embedded closed submanifold of dimension at least 1, then $\Homeo_0(M \rel N)$ is also Steinhaus.  
\end{theorem}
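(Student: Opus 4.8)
The plan is to observe that the proof of Theorem~\ref{main cont thm} in Section~\ref{main pf sec} never uses that $W = \phi^{-1}(V_0)$ is the preimage of a neighborhood of the identity: the only property of $W$ invoked anywhere is that $\Homeo_0(M)$ is the union of countably many left-translates $g_i W$. So I would simply re-run the entire argument with $W$ replaced by an arbitrary symmetric set $W \subseteq \Homeo_0(M)$ admitting a countable left-translate cover, and read off the Steinhaus exponent, which should be the same constant $12m$ appearing in the proof of the main theorem.

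Concretely, fix such a $W$. First, the Baire category argument in the proof of Lemma~\ref{Baire lemma} shows — using only that $\Homeo_0(M)$ is a Baire space covered by the $g_iW$ — that some $g_iW$ is dense in a nonempty open set, hence $W$ is dense near some point, hence $W W^{-1} = W^2$ (using symmetry of $W$) is dense in a neighborhood of the identity; equivalently, the closure of $W^2$ contains a neighborhood of the identity. Next, Lemmas~\ref{restriction lemma}, \ref{local lemma}, and \ref{multi lemma} go through verbatim with this $W$: given a finite disjoint collection of balls $\{B_\alpha\}$, the ``infinite composition'' trick produces shrunken balls $B''_\alpha \subset B_\alpha$ such that every $f \in \Homeo_0(M)$ supported in $\bigsqcup_\alpha B''_\alpha$ lies in $W^8$ — the exponent $8$ arising from two applications of the restriction lemma (each costing $W^2$) followed by the commutator trick of Proposition~\ref{perfectness prop} (which squares $W^4$ to $W^8$). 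Crucially, this constant $8$ is absolute, independent of $W$ and of $M$.

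Then Step 3 applies unchanged. Let $m = m(M)$ be the constant from Lemma~\ref{cover lemma}, depending only on $M$. Choose $\epsilon$ small enough that every homeomorphism supported on an $\epsilon$-ball lies in the closure of $W^2$ (possible by the density statement above), build the efficient cover $\{E_1, \dots, E_m\}$ of Lemma~\ref{cover lemma} by disjoint $\epsilon$-balls, apply Lemma~\ref{multi lemma} to each $E_i$ to obtain shrunken balls on which the $W^8$ property holds, and use conjugators $w_i \in W^2$ — obtained by $2\epsilon$-approximating a push-in homeomorphism — to move each fragmentation factor into the shrunken balls, so that $w_i^{-1} f_i w_i \in W^8$ and hence $f_i \in W^{12}$. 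Local fragmentation (Proposition~\ref{fragmentation prop}) then gives a neighborhood $U$ of the identity with $U \subseteq W^{12m}$. Thus $\Homeo_0(M)$ is Steinhaus with exponent $n = 12m$ depending only on $M$. For $\Homeo_0(M\rel N)$ with $\dim N \geq 1$, the same argument runs using Proposition~\ref{relative perfectness prop}(ii) and the relative form of Proposition~\ref{fragmentation prop} in place of their absolute versions, together with Lemmas~\ref{multi lemma 2} and \ref{multi lemma 3} and the fact (Section~\ref{modification sec}) that sufficiently small metric balls meeting $N$ or $\partial M$ are embedded relative or half balls, so that Lemma~\ref{cover lemma} still furnishes an efficient cover; the hypothesis $\dim N \geq 1$ is exactly what is needed for Proposition~\ref{relative perfectness prop}(ii).

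The main thing to watch is the bookkeeping of exponents: one must check that the local step genuinely costs an absolute power of $W$ (namely $8$) with no hidden dependence on $M$, and that all $M$-dependence enters solely through the cover constant $m$, so that the resulting exponent $12m$ is legitimately a function of $M$ alone — this is precisely what the definition of Steinhaus requires, and it is the only delicate point, since the underlying geometric and algebraic lemmas have already been established.
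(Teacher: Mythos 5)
Your proposal is correct and is exactly the paper's own argument: the paper proves Theorem \ref{main steinhaus thm} by observing that every step of the proof of Theorem \ref{main cont thm} uses only that $\Homeo_0(M)$ is covered by countably many left-translates of the symmetric set $W$, so the argument runs verbatim for an arbitrary such $W$ and yields the exponent $12m$ with $m = m(M)$ from Lemma \ref{cover lemma}. Your bookkeeping of the exponents ($W^2$ dense near the identity from Baire, $W^8$ from the local step, $W^{12}$ per fragmentation factor, $W^{12m}$ total) and your handling of the boundary and relative cases match the paper's intent precisely.
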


%---------------------------------------------------------------------------------

%---------------------------------------------------------------------------------
\section{Applications} \label{applications sec}

\subsection{A uniqueness result}

As a first application, we give a new short proof of Kallman's theorem from \cite{Kallman} (Theorem \ref{Kallman thm} in the introduction), that $\Homeo_0(M)$ has a unique complete, separable topology.  

\begin{proof}[Proof of Theorem \ref{Kallman thm}]
Put any complete, separable topology on $\Homeo_0(M)$, and let $H$ denote the resulting topological group.  By Theorem \ref{main cont thm}, the identity map $\Homeo_0(M) \to H$ is a continuous isomorphism of Polish groups.  Pettis' theorem (see 9.10 in \cite{Kechris}) which, in the form that we need it, is essentially a Baire category theorem arguement, now implies that this map is actually \emph{open}, hence a homeomorphism.  
\end{proof}

\subsection{Extension problems}

In \cite{EM}, Epstein and Markovic show that there is no \emph{extension} from the group of quasi-symmetric homeomorphisms of the circle to the group of quasi-conformal homeomorphisms of the disc.  In other words, there is no homomorphism $\phi: QS(S^1) \to QC(D^2)$ such that the restriction of $\phi(g)$ to the boundary of $D^2$ agrees with $g$, for each $g \in QS(S^1)$.   A major step in their proof is to show that any such map would have to be continuous.  Motivated by this, they ask whether any extension of $\Homeo_0(S^1)$ to $\Homeo_0(D^2)$ is necessarily continuous.   Theorem \ref{main cont thm} immediately gives a positive answer, as well as a positive answer to the more general problem of extensions $\Homeo(\del M) \to \Homeo(M)$.  

A less trivial application of automatic continuity is to the problem of extending \emph{germs} of homeomorphisms.  
Let $G$ be the group of orientation preserving homeomorphisms of $\R^n$ that fix the origin, and $\germ(\R^n, 0)$ the group of germs of elements of $G$ at $0$.    There is a natural (quotient) map $G \to \germ(\R^n, 0)$. Navas has asked whether this map has a group-theoretic section, in the following sense. 

\begin{question}[Navas, see also Remark 1.1.3 in \cite{DNR}]  \label{germs q}
Does there exist a homomorphism $\phi: \germ(\R^n, 0) \to G$ such that the composition $\germ(\R^n, 0) \to G \overset{\phi} \to \germ(\R^n, 0)$ is the identity? 
\end{question}

Automatic continuity implies a stronger result.  

\begin{proposition} There is no faithful homomorphism $\germ(\R^n, 0) \to \Homeo_0(\R^n)$.  In fact, there is no nontrivial homomorphism from $\germ(\R^n, 0)$ to any separable group.  
\end{proposition}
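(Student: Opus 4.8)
The plan is to combine automatic continuity (Theorem \ref{main steinhaus thm}) with the algebraic simplicity of the group of germs. The overall strategy is: (1) show $\germ(\R^n,0)$ is simple, or at least that it has no proper normal subgroup of countable index / no nontrivial homomorphism to a separable group by a simplicity-type argument; (2) observe that a nontrivial homomorphism $\phi:\germ(\R^n,0)\to H$ to a separable group would have to be injective by simplicity, hence an isomorphism onto its image, which is a separable group; (3) derive a contradiction by showing $\germ(\R^n,0)$ carries no separable group topology, using that a nontrivial homomorphism to a \emph{Polish} group would be forced to be continuous-but-injective in a way that is incompatible with the structure of germs. Actually the cleanest route avoids topologizing the germ group directly: it suffices to show that any homomorphism $\phi:\germ(\R^n,0)\to H$ with $H$ separable is trivial, and for this we only need (a) simplicity of $\germ(\R^n,0)$ and (b) the fact that $\germ(\R^n,0)$ is \emph{not} separable in any sense compatible with being a quotient of $\Homeo_0(\R^n,0)$ — but more directly, we exploit the quotient map $q:\Homeo(\R^n,0)\to\germ(\R^n,0)$.

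First I would set up the reduction via the quotient map. Let $G_0=\Homeo_0(\R^n,0)$ (homeomorphisms of $\R^n$ fixing $0$, in the identity component, with the compact-open topology), and let $q:G_0\to\germ(\R^n,0)$ be the natural map to germs at $0$. Given a homomorphism $\phi:\germ(\R^n,0)\to H$ with $H$ separable, the composition $\psi=\phi\circ q:G_0\to H$ is a homomorphism to a separable group. Now $G_0$ is a Polish group, and the proof of Theorem \ref{main steinhaus thm} (weak continuity for noncompact manifolds, or the Steinhaus-type argument localized to a chart) applies: $\psi$ is weakly continuous, i.e.\ its restriction to $\{f\in G_0:\supp(f)\subset K\}$ is continuous for every compact $K$. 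In particular, taking $K$ to be a small ball around $0$, $\psi$ is continuous on the subgroup $\Homeo_c(B\setminus\{0\})$ (compactly supported in a punctured neighborhood), hence... but those map to the identity germ. The key point: take a sequence $f_k\in G_0$ supported in shrinking balls $B(1/k,0)$ that does \emph{not} converge to the identity as germs (e.g.\ suitable ``bump'' homeomorphisms scaled toward $0$), yet $f_k\to\id$ in the $C^0$ topology on $G_0$. By continuity of $\psi$ on a fixed compact neighborhood of $0$, $\psi(f_k)\to\psi(\id)=e$; but $\psi(f_k)=\phi(q(f_k))$, and the germs $q(f_k)$ are ``spread out.'' Combined with simplicity, one bootstraps to triviality.

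More precisely, here is the argument I would write. By the appendix, $\germ(\R^n,0)$ is simple (in fact strongly uniformly simple), so $\ker\phi$ is either all of $\germ(\R^n,0)$ (giving $\phi$ trivial, done) or trivial, so assume $\phi$ is injective. Now I claim $\phi$ cannot be injective. Consider the subgroup $\Gamma\subset G_0$ of homeomorphisms supported in the closed unit ball $\bar B$ and fixing $0$; then $q|_\Gamma$ is onto $\germ(\R^n,0)$, since every germ is represented by a homeomorphism we may cut off outside $\bar B$. The composition $\psi=\phi\circ q:\Gamma\to H$ is continuous by the weak-continuity corollary (as $\Gamma$ is the stabilizer of a compact set). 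Pick homeomorphisms $g_k\in\Gamma$, each supported in $B(1/k,0)$, with all $g_k$ conjugate in $\Gamma$ to a \emph{fixed} nontrivial $g$ supported in $B(1/2,0)$ — achieved by conjugating $g$ by the contraction $x\mapsto x/2^{k}$ (suitably smoothed to lie in $\Gamma$), which is legitimate since such contractions, cut off outside $\bar B$, lie in $G_0$. As $k\to\infty$, $g_k\to\id$ in the $C^0$ topology on $\Gamma$, so by continuity $\psi(g_k)\to e$ in $H$. But $\psi(g_k)=\phi(q(g_k))$ and $q(g_k)$ is conjugate in $\germ(\R^n,0)$ to $q(g)$ — indeed the contraction and $g$ itself both have well-defined germs at $0$, and germ-conjugacy of $q(g_k)$ to $q(g)$ holds — hence $\psi(g_k)=\phi(q(g_k))$ is conjugate in $H$ to the \emph{fixed} element $\phi(q(g))\neq e$. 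A sequence of conjugates of a fixed nontrivial element converging to $e$ forces, via the following standard fact, that $\phi(q(g))=e$: if $h_k x h_k^{-1}\to e$ then in particular... one needs $H$ to have no small subgroups or to argue directly — actually the correct conclusion is that $\phi(q(g))$ lies in the kernel of $\phi$ restricted appropriately, contradicting injectivity. Pushing this: the normal closure of $q(g)$ in $\germ(\R^n,0)$ is everything by simplicity, so $\phi$ of the whole group is generated by elements arbitrarily close to $e$, forcing $\phi$ trivial, contradiction.

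The main obstacle — the delicate point I expect to require the most care — is the step ``a sequence of conjugates of a fixed nontrivial element $\phi(q(g))$ converges to $e$, therefore $\phi(q(g))=e$.'' This is false in a general topological group (conjugates of a nontrivial element can accumulate at $e$), so one cannot argue purely from $H$. The resolution must use the \emph{simplicity} of $\germ(\R^n,0)$ more cleverly: the right statement is that for any neighborhood $V\ni e$ in $H$, the set $\phi^{-1}(V)$ contains a nontrivial element (namely some $q(g_k)$ for $k$ large), and since $\germ(\R^n,0)$ is simple, the normal closure of that element is everything, but $\phi^{-1}(V\!V^{-1}\cdots)$ being large and $\phi$ a homomorphism forces $\phi(\germ(\R^n,0))$ into an arbitrarily small neighborhood, i.e.\ $\phi$ is trivial. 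Formalizing this requires either (i) running the full Steinhaus machinery on $\Gamma$ to get genuine continuity of $\psi$ at $\id$, then noting $\ker\psi=\ker(\phi\circ q)=q^{-1}(\ker\phi)$ is a closed subgroup of $\Gamma$ containing all sufficiently-small-support homeomorphisms, hence contains $q^{-1}(\{e\})$ and more, so $\ker\phi\neq\{e\}$ unless $\phi$ is trivial; or (ii) invoking the strongly uniformly simple property from the appendix to bound word-length in germ-conjugates and conclude directly. Option (i) seems cleanest and is the route I would pursue: establish $\psi=\phi\circ q:\Gamma\to H$ is continuous at the identity (this is exactly the localized Steinhaus argument of Step 2–3 applied inside a chart around $0$), deduce $\ker\phi\supsetneq\{e\}$ since $q$ has large kernel and continuity forces $\psi$ to kill a neighborhood's worth of fragmented pieces, and finish by simplicity.
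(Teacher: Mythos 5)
Your overall skeleton matches the paper's: compose $\phi$ with a map from some group known to have automatic continuity, observe that the kernel of the composition contains a dense subgroup (homeomorphisms that are the identity near $0$), conclude the composition is trivial, hence $\phi$ is not injective, and finish with the simplicity of $\germ(\R^n,0)$ from the appendix. The final paragraph of your proposal, option (i), is essentially the right argument. But there is a genuine gap in the one step that everything hinges on: the continuity of $\psi=\phi\circ q$ on your chosen subgroup $\Gamma$ of homeomorphisms supported in $\bar B$ and \emph{fixing $0$}. The weak-continuity corollary does not apply, because it concerns homomorphisms defined on all of $\Homeo_0(M)$, whereas $\phi\circ q$ is only defined on the point stabilizer. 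Nor can you run ``the localized Steinhaus argument of Step 2--3 inside a chart around $0$'' for $\Gamma$: that argument needs Proposition \ref{perfectness prop} (Anderson's trick) for balls, and for a ball $B$ containing the fixed point, with $a,b$ required to fix $0$, the trick fails --- $b$ fixes $0$, so the iterates $b^n(\supp f)$ all contain $0$ and cannot be made disjoint. This is exactly why Proposition \ref{relative perfectness prop} and Theorem \ref{main steinhaus thm} require the preserved submanifold to have dimension at least $1$; the point-stabilizer case is deliberately excluded, and no automatic continuity statement for $\Homeo(\bar B\rel\{0\})$ is available in the paper. Your middle argument with conjugates of a contraction converging to the identity does not rescue this, as you yourself note.

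The paper's resolution of precisely this difficulty is the missing idea: instead of the point stabilizer in $\R^n$, use the radial embedding $i:\Homeo_0(I)\to G$, where $I=[0,1]$ and a homeomorphism $f$ of $I$ acts by $r\vec v\mapsto f(r)\vec v$ on the unit ball. Here $0$ is a \emph{boundary point} of $I$, so $\Homeo_0(I)$ fixes it for free, and automatic continuity for $\Homeo_0(I)$ is already known (Rosendal--Solecki). The composition $\Phi=\phi\circ q\circ i$ is then continuous, its kernel contains the dense subgroup of homeomorphisms of $I$ equal to the identity near $0$, so $\Phi$ is trivial; since $i$ has nontrivial image in $\germ(\R^n,0)$, $\phi$ is not injective, and simplicity (Theorem \ref{germ thm}) forces $\phi$ to be trivial. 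If you replace your $\Gamma$ with this one-dimensional radial subgroup, your option (i) closes up into a complete proof.
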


The proof only uses Rosendal and Solecki's theorem on automatic continuity of $\Homeo_0(I)$, where $I$ is a compact interval \cite{RS}, and the fact that $\germ(\R^n, 0)$ is simple, which is proved in the Appendix with Fr\'ed\'eric Le Roux.   The idea for this use of automatic continuity of $\Homeo_0(I)$ was communicated to me by C. Rosendal.  

\begin{proof}
As above, let $G$ be the group of orientation preserving homeomorphisms of $\R^n$ that fix the origin, and $I$ the interval $[0,1]$.  There is an embedding $i: \Homeo_0(I) \to G$ given by a ``radial action on the unit ball."  Precisely, put radial coordinates on the unit ball $\B^n \subset \R^n$ as $\{r \vec{v} \mid r \in [0,1], \vec{v} \in S^{n-1} \}$ and define 
$$ 
i(f)(x) = \left\{ \begin{array}{rll}  f(r) \vec{v} & \mbox{if }  x = r \vec{v} \in \B^n  \\ 
x & \mbox{otherwise} 
\end{array}\right.
$$
The image of $i(\Homeo_0(I))$ under the quotient map to $\germ(\R^n, 0)$ is abstractly isomorphic to the group of germs at $0$ of orientation-preserving homeomorphisms of $[0,1]$.  

Now suppose that $\phi: \germ(\R^n, 0) \to H$ is a homomorphism to a separable topological group.   Consider the induced homomorphism 
$$\Phi: \Homeo_0(I) \overset{i} \to G \to \germ(\R^n, 0) \overset{\phi} \to H$$
By automatic continuity for $\Homeo_0(I)$, $\Phi$ is continuous.  However, the kernel of $\Phi$ contains the subgroup of homeomorphisms that restrict to the identity in a neighborhood of $0 \in I$, and this is a dense subgroup.  It follows that $\Phi$ is trivial, and hence $\phi$ is not injective.   
Since Theorem \ref{germ thm} states that $\germ(\R^n, 0)$ is simple, $\phi$ must be trivial.  
\end{proof}

As an immediate consequence, we have the following.  
\begin{corollary} \label{germs cor}
$\germ(\R^n, 0)$ does not admit a separable group topology.  
\end{corollary}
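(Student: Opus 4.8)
The plan is to derive Corollary \ref{germs cor} as a direct consequence of the Proposition just proved, together with the Baire category argument that already appears in Lemma \ref{Baire lemma}. The key observation is that ``separable group topology'' is not quite enough on its own to invoke the Proposition in a way that yields a contradiction with simplicity — one needs to know that the Baire category theorem applies, i.e. that the topology is Polish (separable and completely metrizable). So first I would reduce to that case.

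First, suppose for contradiction that $\germ(\R^n,0)$ carries a separable group topology $\tau$. Consider the identity homomorphism $\id \colon \germ(\R^n,0) \to (\germ(\R^n,0), \tau)$. By the Proposition (the ``no nontrivial homomorphism to any separable group'' clause), this homomorphism must be trivial — but it is visibly injective, and $\germ(\R^n,0)$ is nontrivial, a contradiction. So in fact the Corollary is immediate from the Proposition as stated, with no need to pass through completeness at all: the Proposition already rules out \emph{any} injective homomorphism into \emph{any} separable topological group, and the identity map onto $\germ(\R^n,0)$ equipped with any separable topology is such a map. I would simply state this.

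Concretely, the proof I would write is: ``Suppose $\germ(\R^n,0)$ admitted a separable group topology; then the identity map would be a nontrivial homomorphism from $\germ(\R^n,0)$ to a separable topological group, contradicting the Proposition. \qed'' The only subtlety worth flagging is to make sure the Proposition's hypothesis really is just ``separable topological group'' and not ``Polish group'' — scanning its proof, the continuity input is automatic continuity of $\Homeo_0(I)$ (which is about maps \emph{to} arbitrary separable groups), and the triviality conclusion then uses density of the kernel plus simplicity, so indeed no completeness of $H$ is needed. Hence there is no real obstacle here.

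The main (and essentially only) thing to get right is the logical packaging: the Corollary is a one-line deduction, and the temptation to over-engineer it (e.g. by invoking Polishness or Pettis' theorem) should be resisted. If one wanted to be maximally careful one could instead phrase it as: any topology on a group $G$ making it a topological group turns $\id\colon G \to G$ into a continuous — in particular, a bona fide — homomorphism, so if that target is separable the Proposition forces $G$ trivial. I would include exactly that sentence and nothing more.
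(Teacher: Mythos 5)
Your proof is correct and is exactly the paper's intended argument: the paper states the corollary as an immediate consequence of the Proposition, and the one-line deduction via the identity homomorphism is the right packaging. Your observation that no completeness or Polishness of the target is needed is also accurate, since the Proposition rules out nontrivial homomorphisms to arbitrary separable topological groups.
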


Navas has also asked the following.

\begin{question}
Suppose that there is an isomorphism $\germ(\R^n, 0) \to \germ(\R^m, 0)$.  Is it necessarily the case that $m = n$?  
\end{question}

This question was intended to mirror the theorem of Whittaker \cite{Whittaker}, which states that if $M$ and $N$ are compact manifolds, and $\phi: \Homeo_0(M) \to \Homeo_0(N)$ is an isomorphism, then $M = N$.   Whittaker's result is essentially \emph{topological} (in fact, even without Whittaker, our Theorem \ref{main cont thm} implies that $\phi$ must be a homeomorphism!).   Corollary \ref{germs cor} implies that the corresponding question for germs of homeomorphisms is fundamentally an \emph{algebraic} question, and so likely requires completely different techniques.  
\bigskip

We conclude with a related open question that has recently attracted attention. 

\begin{question}  \label{dim q}
Suppose that $\phi: \Homeo_0(M) \to \Homeo_0(N)$ is a nontrivial (hence injective) homomorphism.  Is it necessarily true that $\dim(M) \leq \dim(N)$?    If $\dim(M) = \dim(N)$, must $\phi$ come from an embedding or a covering map?  
\end{question}

In \cite{Hurtado}, an analogous result is proved for groups of smooth diffeomorphisms of manifolds.  The first step in the proof is to show that such a homomorphism is necessarily continuous.  Our theorem \ref{main cont thm} gives this in the case of $\Homeo_0(M)$; this should represent significant progress towards the solution of Question \ref{dim q}.

%-----------------------------------------------------------------------
\subsection{Algebraic nonsmoothing}  \label{nonsmoothing sec}

Recall that in the introduction we defined the \emph{regularity} of an abstract group $G$ to be the largest $r$ such that there exists a manifold $M$ and a nontrivial homomorphism $G \to \Diff^r(M)$.   The following question appears to be wide open. 

\begin{question} \label{reg q}
Give examples of groups of regularity $r$, for any given $r$.  Are there examples which are finitely or compactly generated groups?  
\end{question}

In \cite{Navas}, Navas discusses the related question of finitely generated subgroups of $\Homeo_0(S^1)$ that do not act by $C^1$ diffeomorphisms on $S^1$.  However, his work relies heavily on the 1-dimensional setting, and it is conceivable that his examples could act by diffeomorphisms on a manifold of higher dimension, hence still be \emph{algebraically $C^1$--smoothable}, in the sense defined in the introduction.  

We give the first partial answer to Question \ref{reg q} now, showing that $\Homeo_0(M)$ has regularity 0.  

\begin{proof}
Suppose for contradiction that $\phi: \Homeo_0(M) \to \Diff^1(N)$ were such a homomorphism.  The topology on $\Diff^1(N)$ is separable, it is induced by the $C^1$ norm 
$$\| f\|_1 = \sup_{x,y \in M} \{ d_N(f(x), x) + \|Df(y)\| \}$$ 
where $d_N$ is any compatible metric on $N$.  Here one needs to define a suitable notion of norm of derivative.  For the purposes of this proof, we take this norm to be defined by fixing an embedding $N \subset \R^m$ for some $m$, and taking the operator norm of $Df(y)$ as a map between the tangent spaces at $y$ and $f(y)$.   We take the metric on $N$ to be the path metric induced by the embedding in $\R^m$.

Fix $\epsilon > 0$.  Since $\phi$ is continuous by Theorem \ref{main cont thm}, there exists a neighborhood $U$ of the identity in $\Homeo_0(M)$ such that $\|\phi(f) \|_1 \leq \epsilon$ for all $f \in U$.   In particular, if $B \subset M$ is an embedded ball of sufficiently small diameter, and $G_B$ the group of homeomorphisms of $M$ supported on $B$, then $\phi(G_B) \subset U$.  Let $B$ be such a ball, and $B' \subset B$ a smaller ball with closure contained in $B$.  

Let $g \in \Homeo_0(M)$ be a contraction of $B'$ supported on $B$.  By this, we mean that 
\begin{enumerate}[i)]
\item $g(B') \subset B'$
\item $\bigcap \limits_{n=1}^\infty g^n(B') = \{p\}$ for some point $p \in B'$
\item $\supp(g) \subset B$.  
\end{enumerate}
Note also that $\supp(g^n) \subset B$, so $g^n \in U$, and hence $\|\phi(g^n) \|_1 \leq \epsilon$, for all $n \in \Z$.  

Let $h$ be supported in $G_B'$.  Since $\Homeo_0(M)$ is simple, $\phi(h)$ is nontrivial, so there exists $x_0 \in N$ with $d(\phi(h)(x_0), x_0) =\delta > 0$.  
Since $g$ contracts $B'$, as $n \to \infty$, $g^n h g^{-n} \to \id$ in $\Homeo_0(M)$.  By continuity, $\phi(g^n) \phi(h) \phi(g^{-n}) \to \id$ in $\Homeo(N)$.   In particular, if $n$ is large enough,  then
$$\sup_{y \in N} d_N \left( \phi(g^n h g^{-n})(y), y \right) < \delta/\epsilon.$$

In particular, taking $y = \phi(g^n)(x_0)$, this means that  $d_N \left( \phi(g^n h)(x_0), \phi(g^n)(x_0) \right) < \delta/\epsilon$.  Consider a geodesic segment $\gamma$ on $N$ from $\phi(g^n h)(x_0)$ to $\phi(g^n)(x_0)$.  Then $\phi(g^{-n})(\gamma)$ is a $C^1$ path from $\phi(h)(x_0)$ to $x_0$, so has length greater than $\delta$.   It follows that $\phi(g^{-n})$ expands the length of a differentiable path by a factor of more than $\epsilon$.  But this contradicts the fact that $\phi(g^{-n}) \in U$, so $\sup_{x \in M} \|Dg^n(x) \| < \epsilon$. 
 
\end{proof}

We conjecture that an analogous result holds for diffeomorphism groups.     
\begin{conjecture}
The group $\Diff^r(M)$ has regularity $r$.   
\end{conjecture}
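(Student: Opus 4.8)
The natural approach is to adapt the proof of Theorem~\ref{nonsmoothing thm}. That argument rested on two facts: automatic continuity of the source $\Homeo_0(M)$, and the elementary observation that a homeomorphism supported in a ball of small diameter is automatically $C^0$-small. Continuity produced a neighborhood $U$ of the identity whose image lies in a prescribed $C^1$-neighborhood of the identity in $\Diff^1(N)$; the second fact then put an entire cyclic group $\langle g\rangle$ --- with $g$ a contraction of a small ball, supported in a slightly larger one --- inside $U$, forcing every $\phi(g^{-n})$ to obey a fixed derivative bound, which is impossible for a map that undoes an arbitrarily strong contraction. To prove the conjecture one wants to run the analogous argument with $\Diff^r_0(M)$ as source (I take the identity component, which is simple for $r\neq\dim M+1$), ruling out nontrivial homomorphisms to $\Diff^s(N)$; since a $C^s$ diffeomorphism is in particular $C^{r+1}$, it suffices to treat $s=r+1$.

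The first ingredient to supply is continuity of $\phi\colon\Diff^r_0(M)\to\Diff^{r+1}(N)$. For $r=\infty$ the statement is essentially vacuous and continuity is Hurtado's theorem \cite{Hurtado}; the substance is in the range $1\le r<\infty$, where automatic continuity appears to be open. The machinery of Section~\ref{main pf sec} has partial $C^r$-analogues --- fragmentation holds for $\Diff^r_0(M)$, and $\Diff^r_c$ of a ball is perfect (Mather, Thurston) when $r\neq\dim M+1$ --- but Proposition~\ref{perfectness prop} and Claim~\ref{kiW claim} use Anderson's infinite-iteration trick, which builds diffeomorphisms with unbounded derivatives and so is unavailable in $\Diff^r$; circumventing this is exactly the kind of difficulty Hurtado handles in the $C^\infty$ setting, and carrying it out for finite $r$ is a prerequisite sub-project. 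In the plan below I would take some form of continuity of $\phi$ as given.

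The real obstacle is that the miracle of the $C^0$ case --- small support implies small norm --- genuinely fails for $r\ge1$: a diffeomorphism supported in a tiny ball can have enormous derivatives, and no $C^r$-neighborhood of the identity contains all powers of a contraction (already $\|Dg^n(p)-I\|\to1$ at the fixed point $p$). So the cyclic-group trick must be replaced, and I see two avenues. (a) \emph{Rigidity:} prove a Hurtado-type classification --- that a nontrivial $\phi$ is forced to be ``topological in origin,'' built from a covering, an embedding of $M$ (or of an open subset of $M$) into $N$, or a bundle construction with base a piece of $M$ --- and then observe that along the directions on which $\phi$ is active, $\phi(f)$ is topologically conjugate to (a restriction of) $f$, so that a routine analysis of the construction yields the regularity mismatch for $s>r$. (b) \emph{Quantitative dynamics:} keep the derivative-counting skeleton of Theorem~\ref{nonsmoothing thm} but feed it a more robust estimate --- for instance using distortion of carefully chosen cyclic or nilpotent subgroups of $\Diff^r_0(M)$ together with continuity to obstruct a $C^{r+1}$-image, the point being that the distortion functions realizable in $\Diff^r$ and in $\Diff^{r+1}$ differ.

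I expect route (a) to be where the difficulty concentrates: Hurtado's classification uses $C^\infty$-smoothness essentially (for averaging, for controlling how supports can be pushed around, and so on), and either reproving it for finite $r$, or extracting just the regularity conclusion without the full classification, is the crux. Failing that, route (b) requires a quantitative comparison of the internal geometry of $\Diff^r$ and $\Diff^{r+1}$ that is not currently available. And all of this rests on the unresolved automatic continuity of $\Diff^r_0(M)$ for $r<\infty$, which is itself complicated by the open status of simplicity when $r=\dim M+1$.
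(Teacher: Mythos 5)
This statement is a conjecture, and the paper offers no proof of it: the text immediately following it only remarks that ``a good first step would be to prove automatic continuity for such groups,'' i.e.\ the author regards even the first ingredient of your plan as open. Your proposal is likewise not a proof but a research program, and you say as much yourself; so the honest verdict is that there is a genuine gap --- in fact several, each of which you have correctly located. First, continuity of an arbitrary homomorphism $\phi\colon\Diff^r_0(M)\to\Diff^{r+1}(N)$ for finite $r$ is unknown, and your diagnosis of why the paper's method breaks is exactly right: both Proposition~\ref{perfectness prop} and Claim~\ref{kiW claim} glue together infinitely many maps on a shrinking sequence of balls, and the resulting map is a homeomorphism but in general not $C^r$ (derivatives blow up along the sequence), so Steps 1--2 of Section~\ref{main pf sec} do not transfer. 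Second, even granting continuity, the argument of Theorem~\ref{nonsmoothing thm} uses in an essential way that the topology on the \emph{source} is the $C^0$ topology, so that all powers $g^n$ of a contraction supported in a small ball lie in the chosen neighborhood $U$; in the $C^r$ topology on the source this fails (as you note, $\|Dg^n - I\|$ does not tend to $0$ at the fixed point), and no substitute estimate is supplied. Your two proposed replacements --- a Hurtado-type classification for finite $r$, or a distortion/growth comparison between $\Diff^r$ and $\Diff^{r+1}$ --- are plausible directions but are stated as aspirations, not arguments.

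One smaller point worth recording: the conjecture has a trivial half (regularity at least $r$, witnessed by the identity homomorphism) and a hard half (no nontrivial homomorphism to $\Diff^s(N)$ for $s>r$), and your reduction of the hard half to the case $s=r+1$ is correct on the integer scale. But the reduction to the identity component $\Diff^r_0(M)$ and the appeal to Mather--Thurston simplicity silently excludes the case $r=\dim M+1$, where simplicity (hence the step ``$\phi$ nontrivial implies $\phi$ injective on $\Diff^r_0$'') is itself open; any eventual proof must either handle this case separately or restate the conjecture to avoid it. In short: your assessment of where the difficulty lies agrees with the paper's, but neither you nor the paper proves the statement.
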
 

A good first step would be to prove automatic continuity for such groups. 
\begin{question}
Does $\Diff^r_0(M)$ have the automatic continuity property?   If so, is $\Diff^r_0(M)$ Steinhaus?
\end{question}
\vspace{.5cm}

%-----------------------------------------------------------------------
\newpage

\appendix
\section{Appendix: Structure of groups of germs}  \label{appendix}
\begin{center}
\author{Fr\'ed\'eric Le Roux
and Kathryn Mann
}
\end{center}

\begin{theorem} \label{germ thm}
$\germ(\R^n, 0)$ is \emph{uniformly simple} in the following strong sense: given any nontrivial $g \in \germ(\R^n, 0)$, every  element $g' \in \germ(\R^n, 0)$ can be written as a product of 8 conjugates of $g$.
\end{theorem}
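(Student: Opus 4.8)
The plan is to prove a "fragmentation + commutator-absorption" statement entirely at the level of germs, reducing the problem to two standard moves: (1) any germ can be represented by a homeomorphism supported in an arbitrarily small ball around $0$, and (2) on such a ball one has an Anderson-style "infinite swindle" that expresses a compactly supported homeomorphism as a single commutator, combined with a displacement trick that writes any conjugator using a bounded number of conjugates of a fixed $g$. First I would fix a nontrivial $g \in \germ(\R^n,0)$ and choose a representative homeomorphism $\tilde g$ of $\R^n$ fixing $0$; since $g$ is nontrivial there is a point $x_0 \neq 0$ arbitrarily close to $0$ with $\tilde g(x_0) \neq x_0$, hence a small ball $B_0$ near $0$ with $\tilde g(B_0) \cap B_0 = \emptyset$. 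The germ of $\tilde g$ then "contains a free displacement" near $0$, which is the only feature of $g$ I will use.

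Next I would set up the displacement/absorption lemma: if $h$ is a germ represented by a homeomorphism supported in a ball $B$ (near $0$) that can be moved off itself by (the germ of) some homeomorphism $\sigma$ with $\sigma(B) \cap B = \emptyset$, then $h = [\,a, \sigma\,]$ for a suitable germ $a$, by the Anderson argument of Proposition \ref{perfectness prop} applied near $0$ (one uses that $B$ can be chosen small enough that its forward $\sigma$-orbit germs are disjoint — here one needs a germ version, choosing $\sigma$ to be, say, a linear contraction so that iterates shrink toward $0$ and stay disjoint). Crucially $[a,\sigma] = a \, (\sigma a^{-1} \sigma^{-1})$ exhibits $h$ as a product of a conjugate of $a$ by $1$ and a conjugate of $a^{-1}$ by $\sigma$; the real input is that one can further arrange $a$ itself (and $\sigma$) to be conjugates of the given $g$. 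This is where the genuine content lies: using that $g$ displaces some small ball, a standard "$n$-fold commutator = product of conjugates" manipulation (as in proofs of uniform simplicity for $\Homeo_c(\R^n)$, e.g. following Tsuboi or the Mather–Thurston circle of ideas) lets one write any germ supported on a small ball as a product of a bounded number — I expect $4$ — of conjugates of $g^{\pm 1}$.

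Then I would handle an arbitrary target $g' \in \germ(\R^n,0)$: represent it by $\tilde g'$ fixing $0$, and use that the germ of $\tilde g'$ at $0$ agrees with the germ of a homeomorphism supported in a prescribed small ball $B$ (any homeomorphism fixing $0$ is, near $0$, isotopic rel a neighborhood of $0$... more precisely, pick the representative and note only its germ matters, then conjugate so that "interesting behavior" sits inside $B$; concretely, shrink by a radial contraction to make $\tilde g'$ supported in $B$ up to germ equivalence). Apply the previous paragraph to $B$ to write $g'$ as a product of $4$ conjugates of $g^{\pm1}$; absorbing the inverses by noting $g^{-1}$ is itself a product of a bounded number of conjugates of $g$ (or by redoing the commutator bookkeeping symmetrically) brings the total to $8$ conjugates of $g$, giving the claimed bound.

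The main obstacle, and the step I would spend the most care on, is making the Anderson swindle genuinely work \emph{in the group of germs} with a \emph{uniform} count: one must verify that the "disjoint orbit" construction can be carried out using only germ-level data near $0$ (so that the infinite composition defining $a$ is well-defined as a germ, using that the supports shrink to $\{0\}$), and, more delicately, that the conjugators appearing can all be taken to be conjugates of the single fixed element $g$ rather than arbitrary germs — this forces the displacement ball $B_0$ for $g$ and the support ball $B$ for $g'$ to be matched up by a further conjugation, and tracking that this costs only a bounded number of additional conjugates is the crux of landing exactly at $8$.
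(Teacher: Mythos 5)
Your proposal rests on the displacement form of Anderson's trick: represent a germ by a homeomorphism supported in a small ball $B$, displace $B$ off itself by some $\sigma$, and run the swindle. This cannot work in $\germ(\R^n,0)$, and the failure is structural rather than technical. Every element of the group fixes $0$, and every \emph{nontrivial} germ has representatives whose support accumulates at $0$; so the only balls $B$ that can carry a nontrivial germ are balls containing $0$, and no such ball can be moved off itself by an element of the group. Conversely, any homeomorphism whose support is contained in a displaceable set (one whose closure misses $0$, e.g.\ your $B_0$ with $\tilde g(B_0)\cap B_0=\emptyset$, or an annular shell $B\setminus\sigma(B)$ for a contraction $\sigma$) has \emph{trivial} germ at $0$. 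So the swindle, run with germ-level data, only ever produces the identity of $\germ(\R^n,0)$; it cannot reach a nontrivial $g'$. The step you flag as the crux --- ``a standard manipulation lets one write any germ supported on a small ball as a product of $4$ conjugates of $g^{\pm1}$'' --- is therefore not a standard manipulation at all; it is precisely the content that has to be proved by other means.

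The paper's proof replaces displacement by contraction dynamics. The two genuine inputs are: (i) any two local contractions (maps nesting some ball $B\ni 0$ down to $\{0\}$) have conjugate germs, which requires the annulus theorem and Kirby's stable homeomorphism theorem to straighten the fundamental domains $g^{n-1}(B)\setminus g^n(\mathring B)$; and (ii) from an arbitrary nontrivial $g$ one can manufacture a local contraction as a product of exactly $4$ conjugates of $g$, in two stages ($a g a^{-1} g$ contracts a basis of balls for a suitable $a$ built as an infinite composition of swaps supported on disjoint shells converging to $0$; then $b f b^{-1} f$ is a local contraction for a suitable radial $b$). The count of $8$ then comes from writing $g'=c^{-1}(cg')$ where $c$ and $cg'$ are both local contractions, applying (i)+(ii) to $cg'$ with $g$ and to $c$ with $g^{-1}$. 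None of this is recoverable from the displacement picture, so the proposal has a genuine gap at its central step.
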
 

The argument that we give here applies to the case $n \geq 2$.  A short argument for simplicity of $\germ(\R, 0)$ can be found in Proposition 4 of \cite{Mann LO} (using a very similar strategy of proof to the one here). 

\begin{no/co} 
Let $G$ denote the group of orientation preserving homeomorphisms of $\R^n$ fixing $0$.   Recall that $\germ(\R^n, 0)$ is the quotient of $G$ by the subgroup of homeomorphisms that restrict to the identity in a neighborhood of $0$.    
By convention, a \emph{ball containing 0} is an image $B$ of the standard closed unit ball under a global homeomorphism of $\R^n$,  with $0$ in the interior of $B$.  We use the symbol $\mathring{A}$ to denote the interior of a set $A$.  
\end{no/co} 

\begin{definition}
An element $g \in G$ is a \emph{local contraction} if there exists a ball $B$ containing $0$ such that $g(B) \subset \mathring{B}$ and such that $\bigcap \limits_n g^n(B) = \{0\}$ 
\end{definition}

\begin{lemma} \label{contraction lem}  
The germs of any two local contractions are conjugate.  
\end{lemma}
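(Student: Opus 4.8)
The plan is to prove Lemma \ref{contraction lem} by reducing to a normal form: I will show that the germ of any local contraction is conjugate (in $\germ(\R^n,0)$, equivalently realized by a conjugacy in $G$) to the germ of a fixed ``standard'' contraction, say the one given by $x \mapsto x/2$ near $0$. By transitivity of conjugacy it then suffices to conjugate an arbitrary local contraction $g$ to this standard model.

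First I would set up the geometry. Let $g$ be a local contraction with witnessing ball $B$ containing $0$, so $g(B) \subset \mathring B$ and $\bigcap_n g^n(B) = \{0\}$. The key object is the ``fundamental domain'' $A := B \setminus \mathring{g(B)}$, a closed region homeomorphic to an annulus $S^{n-1} \times [0,1]$ (this uses $n \geq 2$, and the generalized annulus/Schoenflies-type fact that the closed region between two nested balls, one in the interior of the other, is a product; for $n\geq 2$ one can invoke the appropriate topological annulus theorem, or simply choose $B$ so that the bounding spheres are tame). The iterates $g^k(A)$, $k \in \Z$, together with $\{0\}$, tile a neighborhood of $0$, and $g$ acts on this decomposition by shifting the index by one. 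The same picture holds for the standard contraction $g_0: x \mapsto x/2$ with $B_0$ the unit ball: its fundamental domain $A_0 = B_0 \setminus \mathring{g_0(B_0)}$ is the closed annulus $\{1/2 \le |x| \le 1\}$.

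Next I would build the conjugating homeomorphism. Choose any homeomorphism $\psi_0 : A \to A_0$ carrying the outer sphere $\partial B$ to $\partial B_0$ and the inner sphere $\partial g(B)$ to $\partial g_0(B_0)$; such $\psi_0$ exists since both regions are homeomorphic to $S^{n-1}\times[0,1]$ respecting the boundary labelling. Now extend $\psi_0$ equivariantly: on $g^k(A)$ define $\psi := g_0^{\,k} \circ \psi_0 \circ g^{-k}$, and set $\psi(0) = 0$. One checks these definitions agree on the overlapping spheres $g^k(\partial g(B)) = g^{k+1}(\partial B)$ — this is exactly where the two possible definitions (from level $k$ and level $k+1$) must be reconciled, and it forces the compatibility condition $\psi_0|_{\partial g(B)} = g_0 \circ \psi_0 \circ g^{-1}|_{\partial g(B)}$, which one arranges by first choosing $\psi_0$ freely on $\partial B$ and then \emph{defining} it on $\partial g(B)$ by that formula, filling in the interior of the annulus $A$ arbitrarily in between. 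The resulting $\psi$ is a bijection of a neighborhood of $0$ to a neighborhood of $0$; continuity at points $\neq 0$ is clear since locally $\psi$ is a composition of homeomorphisms, and continuity at $0$ follows because $g^k(B) \to \{0\}$ and $g_0^k(B_0) \to \{0\}$, so $\psi$ maps a shrinking neighborhood basis at $0$ to a shrinking neighborhood basis at $0$ (and likewise for $\psi^{-1}$). By construction $\psi g = g_0 \psi$ on a neighborhood of $0$, so the germs of $g$ and $g_0$ are conjugate. Finally, $\psi$ is only defined near $0$, but since we work with germs we may extend it to a global element of $G$ arbitrarily (e.g. by coning off), which does not change its germ; this legitimizes the conjugation within $\germ(\R^n,0)$.

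The main obstacle is the equivariant extension and, in particular, verifying continuity of $\psi$ and $\psi^{-1}$ at the origin — one must ensure the chosen $\psi_0$ does not distort the nested spheres so wildly that $\psi$ fails to be proper at $0$. This is handled by the observation above: the sets $g^k(B)$ form a neighborhood basis at $0$ by the defining property of a local contraction, $\psi$ maps $g^k(B)$ onto $g_0^k(B_0)$ which also form a neighborhood basis, and these are nested, so $\psi$ is automatically continuous at $0$ with continuous inverse. A secondary technical point, worth a sentence in the write-up, is the annulus claim for the fundamental domains; for $n \geq 2$ this is standard, and we should note (as the theorem statement does) that the companion $n=1$ case is elementary and treated elsewhere.
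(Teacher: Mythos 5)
Your proposal is correct and rests on the same two ingredients as the paper's proof: the annulus theorem (to identify the fundamental domain $B\setminus g(\mathring{B})$ with a standard round annulus) and the fact that orientation-preserving homeomorphisms of $S^{n-1}$ are topologically isotopic to the identity. The organization differs: the paper first conjugates $g$ to a map $\hat g$ carrying $2^{-n+1}B$ onto $2^{-n}B$ by choosing annulus homeomorphisms $h_n$ one at a time, each prescribed only along its \emph{outer} boundary sphere, and then performs a second conjugacy straightening $\hat g$ to $x\mapsto x/2$ via a foliation built from an isotopy of $\hat g|_{\partial B}$ to the identity; you collapse both stages into a single equivariant extension of one map $\psi_0$ defined on the fundamental domain. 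The price of this compression is that your $\psi_0$ must restrict to \emph{prescribed} homeomorphisms on \emph{both} boundary spheres of the annulus, and ``filling in the interior arbitrarily in between'' is not automatic: a homeomorphism of $S^{n-1}\times\{0,1\}$ extends over $S^{n-1}\times[0,1]$ respecting the two ends only when the two boundary maps are (pseudo-)isotopic. That is exactly where Kirby's stable homeomorphism theorem enters --- the same input the paper invokes for its foliation step --- so you should cite it at that point rather than treat the extension as free; with both boundary maps orientation-preserving it does hold. With that justification supplied your argument is complete: the compatibility computation on the overlaps $g^k(\partial g(B))=g^{k+1}(\partial B)$ is right, and continuity of $\psi$ and $\psi^{-1}$ at the origin is correctly reduced to the fact that the nested sets $g^k(B)$ and $2^{-k}B_0$ form neighborhood bases of $0$.
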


\begin{proof}
We show that any local contraction has germ conjugate to that of $x \mapsto  \frac{1}{2}x$.  
The proof uses the annulus theorem.  

Let $g$ be a local contraction.  After conjugacy, we may assume that the ball $B$ contracted by $g$ is the standard unit ball.  By the annulus theorem, there exists a homeomorphism $h_1: B \setminus g(\mathring{B}) \to B \setminus  \frac{1}{2} \mathring{B}$ that is the identity on $\del B$, and, inductively $h_n: \left( g^{n-1}(B) \setminus g^n(\mathring{B} \right) \to \left( 2^{-n+1}B \setminus 2^{-n} \mathring{B} \right)$ agreeing with $h_{n-1}$ on $\del (g^{n-1}(B))$.   Define a homeomorphism
$$h(x) = \left\{ \begin{array}{ll} 
h_n(x) & \text{ if } x \in g^{n-1}(B) \setminus g^n(\mathring{B}) \\
x & \text{ otherwise.}
\end{array} \right. $$
Then $hgh^{-1}(2^{-n+1}B) = 2^{-n}B$.  Let $\hat{g} = hgh^{-1}$.  Now we build another conjugacy to ``straighten" $\hat{g}$ to the standard contraction $x \mapsto \frac{1}{2}x$.  

The restriction of $\hat{g}$ to $\del B$, considered as a homeomorphism $\del B = S^{n-1} \to \del(  \frac{1}{2}(B)) = S^{n-1}$, is isotopic to the identity (this is a consequence of Kirby's stable homeomorphism theorem); let $g_t$, $t \in [1/2, 1]$ be such an isotopy with $g_1 = \hat{g}$ and $g_{1/2} = \id$.  Identify $B$ with $\{r s \mid r \in [0,1], s \in S^{n-1} \}$, and define a foliation of $B \setminus  \frac{1}{2}B$, transverse to the boundary, with 1-dimensional leaves of the form
$$L_s := \{r g_r(s) \mid r \in [1/2, 1]\}. $$
This extends naturally to a leafwise $\hat{g}$-invariant foliation on $B \setminus 0$ with leaves equal to $\bigcup_{n\geq 0} \hat{g}^n(L_s)$; we will produce a conjugacy that straightens these to radii of $B$.   Note that each $x \in B \setminus \{0\}$ can be written uniquely as $\hat{g}^n(rg_r(s))$ for some $n \in \N$, $r\in (1/2,1]$, and $s \in S^{n-1}$.  
Define $\hat{h}$ by 
$$\hat{h}(x) = \left\{ \begin{array}{ll} 
2^{-n}r s & \text{ if } x = \hat{g}^n(r g_r(s) \\
x & \text{ otherwise.}
\end{array} \right. $$
Then $\hat{h} \hat{g} \hat{h}^{-1}$ preserves each radius of $B$, and is conjugate to $x \mapsto  \frac{1}{2} x$ on each radius.  A continuous choice of conjugacies on radii gives a conjugacy of $\hat{g}$ with a homeomorphism agreeing with $x \mapsto \frac{1}{2}x$ on $B$.  
\end{proof}

\begin{definition}  
An element $g \in G$ \emph{contracts a basis of balls} if there exist nested balls $B_1 \supset B_2 \supset ... $ containing 0 with $\bigcap_n B_n = \{0\}$, and such that $g(B_n) \subset \mathring{B}_n$ for all $n$.  
\end{definition}

\begin{lemma} \label{basis lem1}
Let $g \in G$ have nontrivial germ at 0.  Then there exists $a \in G$ such that $aga^{-1}g$ contracts a basis of balls. 
\end{lemma}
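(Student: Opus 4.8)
\textbf{Proof proposal for Lemma \ref{basis lem1}.}

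The plan is to use the hypothesis that $g$ has nontrivial germ at $0$ to find a small ball on which $g$ ``moves things substantially,'' and then to conjugate $g$ by a well-chosen homeomorphism $a$ so that the product $aga^{-1}g$ absorbs the whole ball into its interior, after which a standard induction produces a whole basis of balls. First I would fix a nested sequence of balls $B_1 \supset B_2 \supset \cdots$ containing $0$ with $\bigcap_n B_n = \{0\}$ (for instance the images of $2^{-n}\B^n$). Since the germ of $g$ at $0$ is nontrivial, for every neighborhood of $0$ there is a point $x$ in it with $g(x)\neq x$; in particular, passing to a subsequence, we may assume that for each $n$ there is some point of $B_n$ that is moved off $B_{n+1}$ by $g$, or more simply that $g(B_n)\not\subset \mathring B_n$ fails to hold uniformly. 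The key geometric input is this: given any ball $B$ containing $0$, one can choose a homeomorphism $a$ supported near $0$ so that $a$ pushes $g^{-1}(\partial B)$ strictly inside $B$ — equivalently so that $ag a^{-1}(B) \subset \mathring B$. Composing with one more copy of $g$ and arranging $a$ carefully on a basis of balls simultaneously will give $aga^{-1}g(B_n) \subset \mathring B_n$ for all $n$.

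More concretely, I would proceed as follows. Step one: reduce to a normal form for $g$ near $0$. Using the nontriviality of the germ, pick a ball $B$ containing $0$ and small enough that we have good control; the cleanest route is to find, inside an arbitrarily small neighborhood of $0$, a ball $B$ with $g(B) \cap B \neq B$ and then modify. Step two: construct $a$ on one ball. The idea is that $a g a^{-1}$ is $g$ conjugated, i.e.\ its action is ``$g$ seen through the chart $a$''; by choosing $a$ to drag the image region $g(B)$ deep inside $B$ we can force $(aga^{-1})(B) \subset \mathring B$. Then $aga^{-1}\cdot g$ maps $B$ into $aga^{-1}(g(B)) \subset aga^{-1}(B)$; if we also arrange $g(B)\subset B$ (shrinking $B$ if needed, or absorbing this into the choice of $a$), we get $aga^{-1}g(B)\subset \mathring B$. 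Step three: do this compatibly on the whole nested sequence. Since the balls $B_n$ shrink to $0$, and we only need $a$ to be a genuine homeomorphism of $\R^n$ fixing $0$, we can build $a$ as an infinite composition (or a limit) of homeomorphisms supported on successive annular regions $B_n \setminus B_{n+1}$, each doing the required local correction on $B_n$; the supports are essentially disjoint shells accumulating only at $0$, so the infinite product converges to a well-defined element $a \in G$. The resulting $a$ satisfies $aga^{-1}g(B_n)\subset \mathring B_n$ for every $n$, which is exactly the statement that $aga^{-1}g$ contracts the basis $\{B_n\}$.

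The main obstacle I anticipate is Step two — making precise the claim that one can always choose $a$ so that $aga^{-1}g$ swallows a given ball — because this must be done using only the nontriviality of the germ, which is a very weak hypothesis (it does \emph{not} say $g$ is itself close to a contraction). The trick is presumably to exploit that $g$ moves \emph{some} point $x_0$ near $0$: choose $B$ to be a small ball whose boundary separates $x_0$ from $g(x_0)$, and let $a$ be a homeomorphism supported in a slightly larger ball that carries $\partial B$ across the ``gap'' created by $g$, so that after conjugation the boundary sphere and its $g$-image become unlinked in the way needed for the composite to be a contraction of $B$. Verifying that such an $a$ exists will use the flexibility of $\Homeo(\R^n)$ (any two tamely embedded balls are ambiently equivalent) together with the annulus theorem, exactly the toolkit already invoked in Lemma \ref{contraction lem}. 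The remaining steps (the infinite-composition convergence of $a$, and checking the nesting conditions) are routine once the one-ball statement is in hand.
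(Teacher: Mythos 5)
Your global architecture --- prove a one-ball statement, then repeat it on a nested basis using correcting homeomorphisms supported on disjoint shells accumulating at $0$, and take the infinite composition --- is exactly the paper's. The gap is in the one-ball step, which you correctly identify as the crux but do not resolve; moreover the specific mechanism you propose cannot work. You ask for $a$ with $aga^{-1}(B)\subset \mathring B$ and, separately, $g(B)\subset B$. Writing $C=a^{-1}(B)$, the first condition says $g(C)\subset \mathring C$, i.e.\ that $g$ itself already maps \emph{some} ball containing $0$ into its interior; the second says the same thing directly. Neither follows from mere nontriviality of the germ: if $g$ is a radial expansion $x\mapsto 2x$ near $0$, then $g(C)\subset \mathring C$ for a ball $C\ni 0$ would give $2^nC\subset C$ for all $n$, which is absurd since $C$ has nonempty interior at $0$ and is bounded. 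Conjugation cannot rescue this, because $aga^{-1}$ contracts some ball containing $0$ if and only if $g$ does.

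The idea you are missing is that the two factors of $aga^{-1}g$ must play asymmetric roles: $aga^{-1}$ should be a correction that pulls the (uncontrollable) image $g(B)$ back inside $B$, not a map that contracts $B$ on its own. Concretely, the paper uses a point $x$ moved by $g$ to choose a ball $B$ containing $0$ and $x$ but not $g^{-1}(x)$, so that $x\notin g(B)$; then a second ball $B'$ with $g(B)\subset \mathring{B'}$ such that the pair $(B,B')$ is standardly positioned (two overlapping round balls with $0$ in the lens); and finally $a$ exchanging $B$ and $B'$ while fixing a smaller neighborhood of $0$ pointwise. Then $aga^{-1}(B')=a(g(B))\subset a(\mathring{B'})=\mathring B$, hence $aga^{-1}g(B)\subset aga^{-1}(\mathring{B'})\subset \mathring B$. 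Your closing heuristic about carrying $\partial B$ ``across the gap'' gestures at this, but without the two-ball configuration and the swap the estimate does not close up. (Note also that the contracted balls must be chosen adaptively inside shrinking neighborhoods $U_n$, each containing a point moved by $g$, rather than fixed in advance as $2^{-n}\B^n$.) Once the one-ball lemma is in place, your Step three --- disjoint shells and the infinite composition --- is fine and matches the paper.
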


\begin{proof}
In this proof, we let $B(r, x)$ denote the ball of radius $r$ about $x$.  

Let $g$ have nontrivial germ at 0.  Then in any neighborhood $U_1$ of 0, there exists $x$ such that $g(x) \neq x$.  This means we can take a ball $B$ containing 0 and $x$, but not $g^{-1}(x)$, in particular $x \notin gB$.   Then we can find a ball $B'$ with $g(B) \subset \mathring{B}'$ and such that the pair $B, B'$ is homeomorphic to the pair $B(2, -z), B(2, z)$, where $z = (1, 0, 0, ...) \in \R^n$ (see Figure~\ref{fig1}).     We may also take $B'$ and $B$ to be contained in $U_1$.  

\begin{figure}[h]
\centerline{
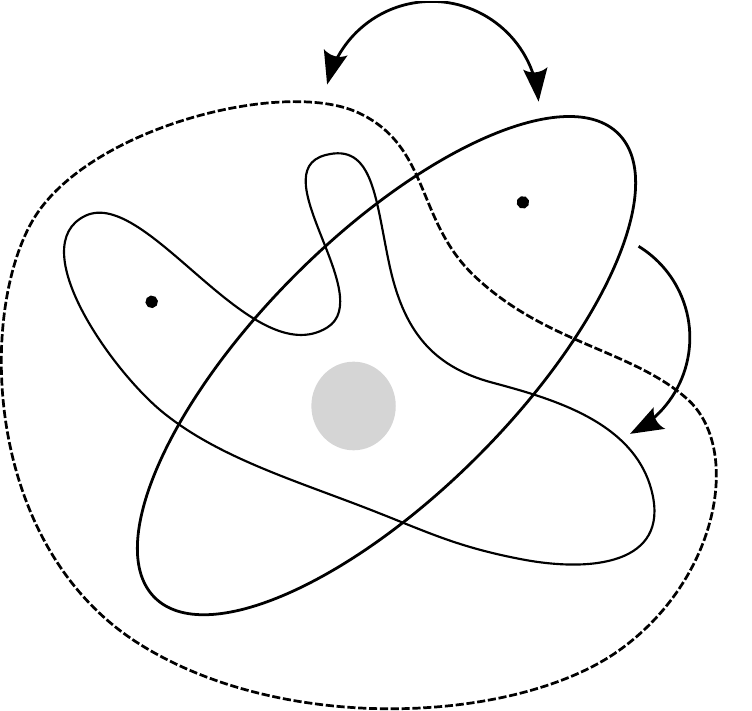
}
\caption{\label{fig1} The balls $B$, $g(B)$, and $B'$}
\end{figure}

Let $h: \R^n \to \R^n$ be a homeomorphism with $h(B) = B(2, -z)$ and $h(B') = B(2,z)$.  
There is a homeomorphism $r$, supported on a small neighborhood of $B(2, -z) \cup B(2, z)$, with $r(B(2,z)) = B(2, -z)$ and $r(B(2,-z)) = B(2, z)$, and such that $r$ fixes pointwise a small ball around $0$.  So $hrh^{-1}$ exchanges $B$ and $B'$, and $hrh^{-1}$ is the identity on a ball $U_2$ containing 0.  We may also take $hrh^{-1}$ to be supported on $U_1$.  Let $a_1 = hrh^{-1}$.  Note that since $g B \subset \mathring{B'}$ and $a_{1}$ exchanges $B$ and $B'$  we get $a_{1}g a_{1}^{-1} B' \subset \mathring{B}$.
Then $a_1 g a_1^{-1} g( B) \subset a_1 g a_1^{-1}\mathring{B'}  \subset \mathring{B}$.  

Repeating the construction above, using $U_2$ in place of $U_1$, we can find $h_2$ supported on $U_2$, fixing a smaller neighborhood $U_3$ of 0, and so that $h_2 g h_2^{-1} g (B_2) \subset \mathring{B_2}$ for some $B_2 \subset U_2$.   In the same manner, inductively define $a_n$, with $\supp(a_n) \subset U_n \setminus U_{n-1}$, and such that $a_n g a_n^{-1} g (B_n) \subset \mathring{B_n}$ for some ball $B_n \subset U_n$ containing 0.  We may also choose $B_n$ so that $\bigcap B_n = \{0\}$ (e.g. at each stage, ensure that $B_n$ is contained in a ball of radius $2^{-n}$).  

Define a homeomorphism 
$$a(x) =  \left\{ \begin{array}{ll} 
a_n(x) & \text{ if } x \in U_n \setminus U_{n-1} \\
x & \text{ otherwise}
\end{array} \right. $$
Then $hgh^{-1}(B_n) \subset \mathring(B_n)$ for all $n$.  

\end{proof}

\begin{lemma}  \label{basis lem2}
Let $f \in G$.  If $f$ contracts a basis of balls, then there exists $b$ such that $bfb^{-1}f$ is a local contraction.  
\end{lemma}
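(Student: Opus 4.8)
The plan is to exploit the same ``mixing'' trick as in Lemma~\ref{basis lem1}, but run it in a telescoping fashion: given a basis of balls $B_1 \supset B_2 \supset \cdots$ contracted by $f$, with $\bigcap_n B_n = \{0\}$, I want to produce a homeomorphism $b$ so that $bfb^{-1}f$ carries a \emph{single} ball $B$ into its own interior, with the intersection of its iterates equal to $\{0\}$. The idea is that the composite $bfb^{-1}f$ should ``use up'' two applications of $f$ to move from the level of one ball down past the next, and we arrange $b$ so that in between the two $f$-steps it shuffles the nested annuli in a way that forces genuine nesting of a single region rather than merely of the original $B_n$.

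First I would set up notation: write $A_n := B_n \setminus \mathring{B}_{n+1}$ for the $n$-th annular region, so that $B_1 \setminus \{0\} = \bigsqcup_{n \geq 1} A_n$, and note $f(B_n) \subset \mathring{B}_n$ means $f$ pushes each $A_n$ strictly inward, roughly into $A_n \cup A_{n+1} \cup \cdots$. Next I would choose, on each annulus, a homeomorphism realizing a shift: using the annulus theorem (as in Lemma~\ref{contraction lem}) to recognize $A_n$ as a standard annulus, build $b$ supported away from $0$ so that $b$ maps (a slightly shrunk copy of) $B_{n+1}$ back out toward $B_n$ — i.e. $b$ acts as an ``outward shift by one level'' on a cofinal collection of balls. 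Then $b f b^{-1}$ contracts the balls $b(B_n)$, and composing with $f$ gives $b f b^{-1} f$ which first contracts via $f$, then (via $bfb^{-1}$) contracts the image. The point is to check that for the particular ball $B := B_1$ (or a suitable fixed ball constructed from the $B_n$), one gets $(bfb^{-1}f)(B) \subset \mathring{B}$ and $\bigcap_k (bfb^{-1}f)^k(B) = \{0\}$, which is exactly the definition of a local contraction.

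Concretely, the cleanest route is probably: pick the fixed ball to be $B_1$, build $b$ supported on $\mathring{B}_1 \setminus \{0\}$ so that $b\big(f(B_1)\big) \subset \mathring{B}_1$ \emph{and} $b$ is arranged (via its action on the annuli $A_n$) so that $b\big(f(B_{n})\big) \subset \mathring{B}_{n-1}$ for every $n \geq 2$; then $bfb^{-1}\big(b f (B_n)\big) \subset b f(B_{n-1}) \subset \cdots$, and tracking this shows $(bfb^{-1}f)^k(B_1)$ is eventually contained in $B_{k}$, forcing the intersection over $k$ to be $\bigcap_k B_k = \{0\}$, while the single-step containment $(bfb^{-1}f)(B_1)\subset \mathring B_1$ is immediate from the first property of $b$. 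Defining $b$ by the usual ``$b_n$ on $A_n$, identity elsewhere'' formula (as with $a$ in the previous lemmas) and invoking the annulus theorem to get the pieces $b_n$ matching on common boundary spheres makes $b$ a genuine homeomorphism of $\R^n$ fixing $0$.

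The main obstacle I expect is the bookkeeping that ensures $bfb^{-1}f$ really does contract \emph{one} fixed ball with iterates collapsing to $\{0\}$, rather than just reshuffling the original basis — in particular, verifying that the two ``halves'' of the composite ($f$, then the $b$-conjugated $f$) compose to shift levels in a controlled, monotone way, and that the matching conditions on the annulus homeomorphisms $b_n$ can be met simultaneously with the level-shift requirement $b(f(B_n)) \subset \mathring B_{n-1}$. One has to be a little careful that $f(B_n)$ need not be an annulus-respecting set, so the annulus theorem is applied to slightly enlarged standard balls sandwiching $f(B_n)$ between $B_{n+1}$ and $B_n$; choosing these sandwiching balls consistently across all $n$ is the delicate point, but it is the same type of argument already used successfully in Lemma~\ref{basis lem1} and Lemma~\ref{contraction lem}, so no genuinely new tool is needed.
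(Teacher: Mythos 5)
Your overall shape is right --- conjugate so that the $B_n$ become standard round balls and then build $b$ as a ``level shift'' so that $bfb^{-1}f$ telescopes --- and this is indeed how the paper proceeds. But the conditions you actually impose on $b$ do not suffice, and the verification contains a genuine error. In the composite $bfb^{-1}f$ the factor $b^{-1}$ sits \emph{between} the two applications of $f$, so you must control where $b^{-1}$ sends the sets $f(B_n)$ as well as where $b$ sends them; your single condition $b(f(B_n)) \subset \mathring B_{n-1}$ says nothing about $b^{-1}f(B_n)$. Moreover your displayed chain $bfb^{-1}\bigl(bf(B_n)\bigr) \subset bf(B_{n-1})$ is computing $(bfb^{-1})\circ(bf) = bf^2$ rather than $bfb^{-1}f$, and in any case is an inclusion into a \emph{larger} set, so it yields no contraction. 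Finally, the direction of your shift is backwards: if $b$ is an outward shift then $b^{-1}$ is inward, and heuristically $bfb^{-1}f \approx f^2$, which contracts the basis of balls but need not be a local contraction. Concretely, take $B_n = 2^{-n}B$, let $f$ be radial with radial function $\mu$ satisfying $\mu(2^{-n}) < 2^{-n}$ but with $\mu = \mathrm{id}$ on some subinterval $[a_n, b_n] \subset (2^{-n-1}, 2^{-n})$ for every $n$, and let $b$ be radial with radial function $\lambda$ satisfying $t \leq \lambda(t) < \tfrac{3}{2}t$ and carrying some point of $(a_n, b_n)$ to another point of $(a_n, b_n)$. All of your stated conditions hold, yet $\lambda\mu\lambda^{-1}\mu$ has a fixed point in every interval $(2^{-n-1}, 2^{-n})$, so $bfb^{-1}f$ fixes spheres accumulating at $0$ and is not a local contraction.

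The missing idea is an \emph{interleaved, inward} prescription for $b$. After conjugating so that $B_n = 2^{-n}B$, choose $r_n \in (2^{-n-1}, 2^{-n})$ with $f(2^{-n}B) \subset r_n B$, and take $b$ radial with radial function $\lambda$ satisfying \emph{both} $\lambda(2^{-n}) = r_n$ and $\lambda(r_n) = 2^{-n-1}$; these requirements are simultaneously realizable precisely because the two sequences interleave. Then $b^{-1}$ returns $r_nB$ to $2^{-n}B$, so the middle $b^{-1}$ exactly undoes the first application of $f$, the second $f$ lands back inside $r_nB$, and the outer $b$ pushes $r_nB$ strictly past the next ball, giving $bfb^{-1}f(2^{-n}B) \subset 2^{-n-1}B$. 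This is the paper's proof.
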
 

\begin{proof}
We return to the notation of Lemma \ref{contraction lem}, where $r B$ denotes the ball of radius $r$ centered at 0   
Using the annulus theorem as in the first part of the proof of Lemma \ref{contraction lem}, we may conjugate $f$ to a homeomorphism that contracts the nested balls $B \supset \frac{1}{2}B \supset \frac{1}{4}B \supset ...$
Abusing notation, let $f$ denote this new map.   Choose $r_n \in (2^{-n-1}, 2^{-n})$ so that $f(2^{-n} B) \subset r_n B $.  Let $\lambda: [0, \infty) \to [0, \infty)$ be a homeomorphism such that $\lambda(2^{-n}) = r_n$, and $\lambda(r_n) = 2^{-n-1}$.  Let $b: \R^n \to \R^n$ be defined by 
$$b(x) = \lambda(\|x\|) \frac{x}{\|x\|}.$$
Then 
$$b f b^{-1} f ( 2^{-n}B) \subset b f b^{-1} r_n B =  bf 2^{-n}B \subset b r_n B = 2^{-n-1} B$$ 
so $b f b^{-1} f$ is a local contraction.  
\end{proof}

Combining Lemmas \ref{contraction lem}, \ref{basis lem1} and \ref{basis lem2} immediately gives the following.  

\begin{corollary}  \label{key cor} 
Let $g \in G$ have nontrivial germ.  Then any local contraction can be written as the product of 4 conjugates of $g$.  
\end{corollary}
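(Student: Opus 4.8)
The plan is to chain the three preceding lemmas together, carefully tracking how many conjugates of $g$ each step costs, and then to transport the resulting local contraction to an arbitrary one by a single conjugation. All of this takes place in $\germ(\R^n,0)$, so ``conjugate'' below means conjugate as germs, and I will freely use that a conjugate of a conjugate of $g$ is again a single conjugate of $g$.

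First I would fix $g \in G$ with nontrivial germ and apply Lemma \ref{basis lem1} to produce $a \in G$ such that $f := a g a^{-1} g$ contracts a basis of balls. The point to record is that $f$ is literally a product of two conjugates of $g$: one factor is $a g a^{-1}$, and the other, $g$, is the conjugate of $g$ by the identity.

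Next I would feed $f$ into Lemma \ref{basis lem2} to obtain $b$ with $c_0 := b f b^{-1} f$ a local contraction. Expanding the conjugate, $b f b^{-1} = (b a)\, g\, (b a)^{-1} \cdot (b\, g\, b^{-1})$, which is again a product of two conjugates of $g$; combining this with the two conjugates comprising $f$ exhibits the specific local contraction $c_0$ as a product of four conjugates of $g$.

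Finally, given an arbitrary local contraction $c$, Lemma \ref{contraction lem} supplies $\sigma$ whose germ conjugates the germ of $c_0$ onto the germ of $c$. Distributing this conjugation over the four factors of $c_0$ shows that the germ of $c$ is itself a product of four conjugates of the germ of $g$, which is the assertion of the corollary. I do not expect any real obstacle: each step is a direct invocation of an already-proved lemma, and the only thing needing attention is the conjugate-counting bookkeeping described above, together with the observation that Lemmas \ref{basis lem1} and \ref{basis lem2} are stated precisely so that the elements they output ($aga^{-1}g$ and $bfb^{-1}f$) have the shape ``product of two conjugates,'' making the count come out to exactly $2 \times 2 = 4$.
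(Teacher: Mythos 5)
Your proposal is correct and is exactly the argument the paper intends when it says the corollary follows "immediately" from Lemmas \ref{contraction lem}, \ref{basis lem1} and \ref{basis lem2}: the element $bfb^{-1}f$ with $f = aga^{-1}g$ is visibly a product of four conjugates of $g$, and conjugating by the element supplied by Lemma \ref{contraction lem} transports this to any given local contraction. The bookkeeping you spell out (including $bfb^{-1} = (ba)g(ba)^{-1}\cdot bgb^{-1}$) is precisely the omitted verification.
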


We can now easily finish the proof of Theorem \ref{germ thm}.  

\begin{proof}[Proof of Theorem \ref{germ thm}]
Let $g$ and $g'$ be elements of $G$, and assume $g$ has nontrivial germ.  We first construct a local contraction $c$ such that $cg'$ is also a local contraction.   Let $r_n = \max \{ \|x\| : x \in g'(2^{-n}B) \}$, and let $c$ be a local contraction mapping $r_n B$ to  $t_n B$ where $t_n < \min \{r_n, 2^{-n-1} \}$ -- it is easy to construct such a map that preserves each ray through 0.    Then $cg'(2^{-n}B) \subset 2^{-n-1}(B)$, so $cg'$ is a local contraction.  

By Corollary \ref{key cor}, $cg'$ can be written as a product of 4 conjugates of $g$.  Since $g^{-1}$ also has nontrivial germ, Corollary \ref{key cor} implies that $c$ can be written as a product of 4 conjugates of $g^{-1}$.  Thus, $g' = c^{-1}cg'$ can be written as a product of 8 conjugates of $g$.  

\end{proof}

%---------------------------------------------------------------------------------

\vspace{.3in}

Dept. of Mathematics 

University of California, Berkeley  

970 Evans Hall

Berkeley, CA 94720 

E-mail: kpmann@math.berkeley.edu

\vspace{.3in}

\textit{Fr\'ed\'eric Le Roux:}

Institut de maths de Jussieu

4 place Jussieu, Case 247, 75252 Paris C\'edex 5

e-mail: frederic.le-roux@imj-prg.fr

\end{document}